\documentclass[11pt,a4paper,reqno]{amsart}%
\usepackage{amsthm,amsmath,amsfonts,amssymb,xcolor,amsxtra,appendix,bookmark,dsfont,bm,mathrsfs}

\usepackage[latin1]{inputenc}
\usepackage{color} 
\usepackage{amssymb}\usepackage{graphicx}
\usepackage{tikz}
\usepackage{hyperref}
\theoremstyle{plain}
\newtheorem{theorem}{Theorem}
\newtheorem{lemma}[theorem]{Lemma}

\newtheorem{proposition}[theorem]{Proposition}

\newtheorem{definition}{Definition}

\theoremstyle{remark}
\newtheorem{remark}[theorem]{Remark}

\allowdisplaybreaks

\title[Entropy Structures and Long-Time Relaxation for 3-Wave Kinetic Equations]{Entropy Structures and Long-Time Relaxation for 3-Wave Kinetic Equations}

\author[G. Staffilani]{Gigliola Staffilani
}
\address{Department of Mathematics, Massachusetts Institute of Technology, Cambridge, MA 02139, USA}
\email{gigliola@math.mit.edu} 
\thanks{G.S. is  funded in part by  the NSF grants DMS-2052651, DMS-2306378 and the Simons Foundation through the Simons Collaboration on Wave Turbulence.}

\author[M.-B. Tran]{Minh-Binh Tran*}
\address{Department of Mathematics, Texas A\&M University, College Station, TX 77843, USA}
\email{minhbinh@tamu.edu} 
\thanks{ M.-B. T. is  funded in part by      NSF CAREER  DMS-2303146, and NSF Grants DMS-2204795, DMS-2305523,  DMS-2306379.}

\begin{document}
\date{\today}

\begin{abstract} 
We establish a new class of entropy structures for \(3\)-wave kinetic equations
with a broad family of interaction weights. Unlike the classical entropies
arising from detailed balance, these estimates are generated by a one-sided
algebraic balance condition encoded in the interaction weights.
To the best of our knowledge, this family of entropy estimates has not
previously appeared in the physics literature on wave turbulence. These
estimates form the central  key
ingredient in the construction of global weak \(L^1_{\mathrm{loc}}\) solutions.
We also prove a long-time rigidity result, showing that the solutions obtained
by this entropy compactness method relax locally to the zero equilibrium as
\(t\to\infty\).
\end{abstract}

\maketitle

 \tableofcontents

\section{Introduction}\label{intro} 

Over the past several decades, wave turbulence theory has become a fundamental
framework for describing the long-time statistical behavior of weakly nonlinear
dispersive wave systems. Its range of applications is extensive, including
inertial waves in rotating fluids, Alfv\'en wave turbulence in the solar wind,
wave propagation in magnetized plasmas, and wave phenomena arising in plasma
and fusion physics, among many others. The origins of the theory go back to the
pioneering work of Peierls \cite{Peierls:1993:BRK}, followed by foundational
contributions of Benney and Saffman \cite{benney1966nonlinear}, Zakharov and
Falkovich \cite{zakharov1967weak}, Benney and Newell \cite{benney1969random},
and Hasselmann \cite{hasselmann1962non,hasselmann1974spectral}. These works
led to the systematic formulation of wave kinetic equations, most notably the
classical 3-wave and 4-wave kinetic equations, which describe the
resonant redistribution of energy among weakly interacting wave modes. More
recently, rigorous mathematical derivations of wave kinetic equations  have been obtained in a sequence of breakthrough
works by Deng and Hani
\cite{deng2019derivation,deng2021propagation,deng2023long,deng2023,
	deng2021full}. For a comprehensive discussion of the physical theory of wave
turbulence and its applications, we refer the reader to
\cite{Nazarenko:2011:WT,PomeauBinh,zakharov2012kolmogorov}.

In this work, we consider the following isotropic  3-wave kinetic equation
\begin{equation}\label{3wave}
	\begin{aligned}
		\partial_t f(t,\omega) \; &=\; \mathcal{Q}[f](t,\omega),
		\qquad (t,\omega)\in(0,\infty)\times[0,\infty),\\
		f(0,\omega) \; &=\; f_0(\omega),
		\qquad \omega\in[0,\infty),
	\end{aligned}
\end{equation}
where the collision operator $\mathcal{Q}$ is defined by
\begin{equation}\label{3waveOp}
	\begin{aligned}
		\mathcal{Q}[f](t,\omega)
		:= {} &
		\iint_{[0,\infty)^2}
		\mathrm d\omega_1\, \mathrm d\omega_2\,
		\delta(\omega-\omega_1-\omega_2)\,
		U(\omega_1)U(\omega_2)
		\Big(
		f_1 f_2
		- f f_1
		- f f_2
		\Big) \\
		&\;
		- 2
		\iint_{[0,\infty)^2}
		\mathrm d\omega_1\, \mathrm d\omega_2\,
		\delta(\omega_2-\omega-\omega_1)\,
		U(\omega_1)U(\omega_2)
		\Big(
		f_1 f
		- f_1 f_2
		- f f_2
		\Big).
	\end{aligned}
\end{equation}

In the above expression, we use the shorthand notation
\[
f = f(\omega), \qquad
f_1 = f(\omega_1), \qquad
f_2 = f(\omega_2).
\]

3-wave kinetic equations arise in many physical contexts and have been
studied from a variety of analytical viewpoints. Representative examples
include phonon interactions in anharmonic crystal lattices
\cite{CraciunBinh,EscobedoBinh,GambaSmithBinh,tran2020reaction}, capillary wave
dynamics \cite{nguyen2017quantum},  stratified oceanic flows
\cite{GambaSmithBinh,kim2025wave}, and kinetic descriptions of Bose--Einstein
condensates
\cite{AlonsoGambaBinh,cortes2020system,EPV,escobedo2023linearized1,
	escobedo2023linearized,ToanBinh,nguyen2017quantum,Binh1}. Numerical studies of 3-wave kinetic equations have also  appeared recently in \cite{banks2025new,das2024numerical,walton2023numerical,walton2024numerical,walton2022deep}.

In \cite{zakharov2012kolmogorov}, equation~\eqref{3wave} is studied with the explicit acoustic interaction weight
\begin{equation}\label{acoustic}
	U(x)=C_U |x|^{d-1}.
\end{equation}
Here \(C_U>0\) is an explicit dimensional constant, normalized to \(1\) for notational simplicity, and \(d=2\) or \(d=3\) denotes the spatial dimension. The corresponding model \eqref{3wave}--\eqref{acoustic} is the principal canonical 3-wave kinetic model treated in detail in \cite{zakharov2012kolmogorov}; see, in particular, Equations~(3.2.3), (3.4.10), (4.3.16), (5.1.8), and (5.1.26) therein. The monograph \cite{zakharov2012kolmogorov} analyzes several fundamental aspects of
\eqref{3wave}--\eqref{acoustic} as a canonical physical example of a 3-wave kinetic
model, including the derivation and structure of stationary Kolmogorov--Zakharov
spectra in Chapter~3, stability issues in Chapter~4, and physical applications in
Chapter~5. In fact, \eqref{3wave}--\eqref{acoustic} is the only 3-wave
model treated in such depth in \cite{zakharov2012kolmogorov}.

The main novelty of the present paper is the identification of new entropy structures for the 3-wave kinetic equation~\eqref{3wave}. To the best of our knowledge, these structures have not previously appeared in the physics literature on wave turbulence. They apply to a broad class of interaction weights \(U\), including the acoustic weights associated with the canonical model \eqref{3wave}--\eqref{acoustic} treated in \cite{zakharov2012kolmogorov}. Thus, the present work reveals  previously unnoticed entropy structures for this class of kinetic equations. In particular, it provides a rare instance in which a new entropy mechanism is uncovered through mathematical analysis, rather than being inherited from the established physics literature (see Remark \ref{Remark:entrpy} for an intuitive explanation of the entropy).

These new entropy structures constitute the central a priori estimates of the
paper. They provide the compactness, stability, and tightness needed to
construct global \(L^1_{\mathrm{loc}}\) weak solutions to \eqref{3wave}. 
This is a significant
feature: for nonlinear kinetic equations, the construction of global-in-time
weak solutions is often highly delicate, and the available theories frequently
lead only to very weak notions of solution, such as renormalized solutions, or
to Radon measure-valued solutions.

Our construction proceeds through a carefully designed approximation scheme. We
first introduce  truncated equations whose collision operators retain enough of
the symmetry of the original 3-wave interaction to preserve the new entropy
estimates uniformly at the approximate level. We then pass to the limit in
spaces of Radon measures. The limit obtained in this way is, a
priori, only a nonnegative Radon measure. The entropy estimates  are then used in
a crucial way to identify its structure: the limit can be decomposed into an
explicit atomic part and a regular part belonging to \(L^1_{\mathrm{loc}}\) (see Proposition \ref{Prop:Existence:U=x:Hphi}, Proposition \ref{prop:global-Ux2-Hphi}, Remark \ref{Remark:EntropyRole}
 and Remark \ref{Remark:Atom} for detailed discussions on the compactness argument and the atomic part).
This decomposition is a surprising observation and one of the key points of the
analysis. Indeed, although the limit is obtained only as a Radon
measure, the entropy bounds rule out any additional singular component beyond
the explicit atom, leaving a genuinely \(L^1_{\mathrm{loc}}\) density after the
atomic part is removed. After removing the atomic component, the regular part is
shown to solve the original, non-truncated equation globally in time in the
\(L^1_{\mathrm{loc}}\) weak sense.

We further show that the solutions constructed in this paper exhibit a striking
long-time behavior. Namely,
\[
f(t,\cdot)\longrightarrow 0
\qquad\text{in } L^1_{\mathrm{loc}}((0,\infty))
\qquad\text{as } t\to\infty .
\]
This conclusion is rather unexpected. Indeed, one might naturally anticipate convergence, at least along
subsequences or after suitable renormalization, toward a nontrivial stationary
spectrum, such as one of the Kolmogorov--Zakharov spectra associated with the
model; see Remark~\ref{Remark:KZ} for a discussion of this point.

Thus, the entropy structure uncovered here plays a dual role. It provides the
central compactness mechanism needed for the construction of global weak
solutions, and it also imposes a strong rigidity on the asymptotic dynamics.
Within the class of global \(L^1_{\mathrm{loc}}\) weak solutions generated by
finite-entropy initial data, no nonzero local-in-frequency equilibrium can arise
as a long-time limit. The only possible asymptotic profile in the local
\(L^1\)-topology is the zero equilibrium. This should not be interpreted as the
absence of energy transfer; rather, it reflects an energy-cascade mechanism in
which the finite-frequency density vanishes because energy is transported toward
\(\omega=\infty\). We refer the reader to Remark~\ref{Remark:Cascade} for a more
detailed discussion of this interpretation.

We do not address uniqueness of \(L^1_{\mathrm{loc}}\) weak solutions in the
present work. For weak solutions with only local \(L^1\) control, uniqueness is
typically beyond the reach of the presently available theory for nonlinear
kinetic equations at low regularity. This limitation is not specific to the
present \(3\)-wave model. Closely related difficulties occur, for example, for
the homogeneous Boltzmann equation, the Nordheim equation for bosons, and the
\(4\)-wave kinetic equation, where uniqueness questions often remain open or
require substantially stronger solution classes; see, for instance,
\cite{EscobedoVelazquez:2015:FTB,EscobedoVelazquez:2015:OTT,Lu2014_RegularityCondensation}.
Accordingly, the focus of the present paper is on the entropy structure, the
compactness mechanism, the construction of global \(L^1_{\mathrm{loc}}\) weak
solutions, and their long-time relaxation, while uniqueness is left outside the
scope of this work.

We finally recall several analytical developments related to the 4-wave
kinetic equation and to closely connected wave kinetic models.
An important theme in the analysis of the 4-wave kinetic equation is the
behavior of Kolmogorov--Zakharov spectra. These spectra are stationary solutions
of the equation and play a central role in the description of energy cascades
in wave turbulence. Their stability properties, together with cascade dynamics
and near-equilibrium stability for the four-wave kinetic equation, have been
studied in \cite{collot2024stability} and
\cite{menegaki20222,escobedo2024instability}.
Further analytical results include local well-posedness and ill-posedness
theory for 4-wave kinetic equations, as well as the study of the associated
hierarchy in polynomially weighted \(L^\infty\) spaces for inhomogeneous
4-wave kinetic equations; see
\cite{ampatzoglou2025ill,ampatzoglou2025optimal,GermainIonescuTran,
	ampatzoglou2025inhomogeneous,staffilani2024energy}. Convergence rates for discrete approximations
of solutions to the four-wave kinetic equation were investigated in
\cite{dolce2024convergence}. Condensation phenomena have been studied in \cite{EscobedoVelazquez:2015:OTT,staffilani2024condensation}. Local well-posedness has also been established for
the MMT model, a one-dimensional 4-wave kinetic equation, in
\cite{germain2023local}. Uniqueness of solutions to the spectral hierarchy in kinetic wave turbulence theory has also been studied in \cite{rosenzweig2022uniqueness}. 
Finally, the 4-wave kinetic collision operator is closely related to the
Nordheim collision operator for dilute Bose gases. A substantial mathematical
literature on the Nordheim equation has developed around well-posedness,
regularity, condensation phenomena, and long-time dynamics; see
\cite{escobedo2003homogeneous,escobedo2007fundamental,EscobedoVelazquez:2015:FTB,Lu2014_RegularityCondensation,Lu2016_LongTimeBEC,Lu2018_LongTimeStrongBE,chen2025derivation}.

%
%
\subsection*{Acknowledgements}
The authors thank Matthew Rosenzweig for fruitful discussions on this topic.

\section{Settings and main result}

In this work, we consider two physically relevant classes of interaction
weights \(U\):
\begin{itemize}
	\item[{\bf (Case A)}] The acoustic weight
	\begin{equation}\label{CasA}
	U(x)=x,
	\end{equation}
	which corresponds to the two-dimensional case \(d=2\) in \eqref{acoustic}.
	
	\item[{\bf (Case B)}] The regularized power-law weight
	\begin{equation}\label{CasB}
	U(x)=x^\rho(1+x)^{-\beta},
	\qquad
	\rho-\beta<2,
	\qquad
	\rho\ge 1,
	\qquad
	\beta\le \rho-1.
	\end{equation}
\end{itemize}

\begin{remark}\label{r1}
We note that the case \(U(x)=x^2\), corresponding to the three-dimensional
acoustic case \(d=3\) in \eqref{acoustic}, also satisfies the entropy structures
proved in Lemma~\ref{Lemma:EntropyTest} (see also Lemma~\ref{lemmeentro}). In fact, Lemma~\ref{Lemma:EntropyTest} establishes that the entropy structures
hold not only for the cases \(d=2,3\), but for all dimensions \(d\ge 2\). However,
this case lies beyond the \(L^1_{\mathrm{loc}}\) theory developed in the
present paper and has already been studied in \cite{soffer2020energy}. In that work, it
was shown that the dynamics may produce an instantaneous transfer of energy to
infinity. It may happen that for every positive time \(t>0\), a portion of the energy has escaped
the finite-frequency region. In particular, the solution cannot remain an
\(L^1_{\mathrm{loc}}\) density even locally in time for certain initial data. Thus, for the three-dimensional acoustic case \(d=3\), the \(L^1_{\mathrm{loc}}\) framework pursued here is not the natural one for the dynamics studied in \cite{soffer2020energy} and measure-valued solutions are needed to capture the possible instantaneous transfer of energy to infinity. In
addition, the long-time behavior is completely degenerate from the
finite-frequency point of view: all of the energy is lost as \(t\to\infty\).

Thus, although the 3-dimensional acoustic case \(d=3\) also enjoys the
entropy structures identified in the present work, the equation \eqref{3wave}
does not admit a global \(L^1_{\mathrm{loc}}\) weak solution. The present paper
therefore focuses on the remaining acoustic case \(d=2\), for which we prove
the existence of global \(L^1_{\mathrm{loc}}\) solutions
 and analyze their long-time behavior.  The interplay between the entropy structures discovered in the present work and the
energy cascade mechanism studied in \cite{soffer2020energy} for the case \(d=3\)
will be the subject of a forthcoming work. We emphasize that the latter mechanism
is entirely different from the entropy mechanism developed in this paper.	
	\end{remark}

\begin{remark}\label{r2}
The kernel
\[
U(x)=x^\rho(1+x)^{-\beta}
\]
may be regarded as a phenomenological generalization of the homogeneous power-law interactions arising in isotropic \(3\)-wave models. Indeed,
\[
U(x)\sim x^\rho \quad \text{as } x\to 0,
\qquad
U(x)\sim x^{\rho-\beta} \quad \text{as } x\to \infty,
\]
so that it retains the usual low-frequency scaling while producing a weaker effective coupling at high frequencies. This makes it a useful model for describing a crossover between an acoustic-type regime at moderate scales and a softened ultraviolet regime \cite{connaughton2009numerical,connaughton2010dynamical}.  
	
\end{remark}

For a convex entropy density \(e:[0,\infty)\to[0,\infty)\), we define

\begin{equation}\label{Entropy}
	\mathcal{E}_e[f]
	:= \int_{[0,\infty)} \, \mathrm d\omega \frac{e[Uf](\omega)}{U(\omega)}.
\end{equation}

Our main result states as follows.

\begin{theorem} 
	\label{thm}
Assume that
\[
U(\omega)=\omega
\qquad\text{or}\qquad
U(\omega)=\omega^\rho(1+\omega)^{-\beta},
\]
where, in the second case,
\[
\rho-\beta<2,
\qquad
\rho\ge1,
\qquad
\beta\le \rho-1.
\]
Let \(f_0\in L^1((0,\infty))\) be nonnegative and compactly supported in \((0,\infty)\). Assume moreover that there exists a convex function
\[
e:[0,\infty)\to[0,\infty)
\]
such that
\[
e\in C^\infty[0,\infty),
\qquad
e(0)=0,
\qquad
e'(0)=0,
\qquad
\lim_{r\to\infty}\frac{e(r)}{r}=+\infty,
\]
and such that
\[
	\mathcal{E}_e[f_0] \ = \ \int_{[0,\infty)} \mathrm d\omega\, \frac{e(U(\omega)f_0(\omega))}{U(\omega)}<\infty.
\]
Then we have the following inequality: \begin{equation*}
	\frac{\mathrm d}{\mathrm dt}\,\mathcal{E}_e[f]
	=
	\int_{[0,\infty)}	\mathrm d\omega \partial_t\!\left(\frac{e[Uf](\omega)}{U(\omega)}\right)\,
	\le 0 .
\end{equation*}
There exists a nonnegative function
\[
f\in C\bigl([0,\infty);w\text{-}L^1_{\mathrm{loc}}((0,\infty))\bigr)
\]
which is a global weak solution of \eqref{3wave} in the following sense. Define
\[
g(t,\omega):=U(\omega)f(t,\omega),
\qquad
g_0(\omega):=U(\omega)f_0(\omega).
\]
Then:
\begin{itemize}
	\item if \(U(\omega)=\omega\),  for every \(\varphi\in C_c^2((0,\infty))\) and every \(t\ge0\),
	\begin{equation}\label{thm:weak1}
		\int_{(0,\infty)} \mathrm d\omega\, g(t,\omega)\varphi(\omega)
		=
		\int_{(0,\infty)} \mathrm d\omega\, g_0(\omega)\varphi(\omega)
		+
		2\int_0^t \mathrm ds
		\int\!\!\!\int_{\{x>y>0\}} \mathrm dx\,\mathrm dy\,
		g(s,x)g(s,y)\,H_\varphi(x,y),
	\end{equation}
	where
	\[
	H_\varphi(x,y)
	:=
	(x+y)\varphi(x+y)
	+
	(x-y)\varphi(x-y)
	-
	2x\,\varphi(x)
	-
	2y\,\varphi(y).
	\]
	
	\item if \(U(\omega)=\omega^\rho(1+\omega)^{-\beta}\),  for every \(\varphi\in C_c^2((0,\infty))\) and every \(t\ge0\),
	\begin{equation}\label{thm:weak2}
		\int_0^\infty \mathrm d\omega\, g(t,\omega)\varphi(\omega)
		=
		\int_0^\infty \mathrm d\omega\, g_0(\omega)\varphi(\omega)
		+
		2\int_0^t \mathrm ds
		\int\!\!\!\int_{x>y>0} \mathrm dx\,\mathrm dy\,
		g(s,x)g(s,y)\,\mathcal H_\varphi(x,y),
	\end{equation}
	where
	\[
	\mathcal H_\varphi(x,y)
	=
	(x+y)^\rho(1+x+y)^{-\beta}\varphi(x+y)
	+
	(x-y)^\rho(1+x-y)^{-\beta}\varphi(x-y)
	\]
	\[
	\qquad
	-
	\Bigl((x-y)^\rho(1+x-y)^{-\beta}+(x+y)^\rho(1+x+y)^{-\beta}\Bigr)\varphi(x)
	\]
	\[
	\qquad
	+
	\Bigl((x-y)^\rho(1+x-y)^{-\beta}-(x+y)^\rho(1+x+y)^{-\beta}\Bigr)\varphi(y).
	\]
\end{itemize}

Finally, 
\[
\lim_{t\to\infty}f(t,\cdot)=0
\qquad\text{in }L^1_{\mathrm{loc}}((0,\infty)).
\]
\end{theorem}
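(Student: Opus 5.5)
The plan has three stages: first the entropy inequality for smooth solutions, then existence through a truncation and compactness argument driven by that inequality, and finally the long-time limit obtained from the time-integrated entropy dissipation.

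\emph{Entropy inequality.} I would work throughout with $g=Uf$, for which the collision operator \eqref{3waveOp} becomes a quadratic form in $g$ over triads $\omega=\omega_1+\omega_2$. Indeed, $U(\omega_1)U(\omega_2)f_1f_2=g_1g_2$, and the mixed terms become $g\,g_1\,U(\omega_2)/U(\omega)$ (and the analogous expression with $1$ and $2$ swapped). Since $\partial_t\bigl(e(g)/U\bigr)=e'(g)\,\partial_tf$, the task is to evaluate $\int e'(g)\,\mathcal{Q}[f]\,\mathrm d\omega$. I would symmetrize over each triad $\{x,y,x+y\}$, following the computation behind Lemma~\ref{Lemma:EntropyTest}. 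The aim is to regroup the integrand as a sum of terms of the form
\[
g_ag_b\bigl(e'(g_c)-e'(g_d)\bigr)\times(\text{weight}),
\]
together with remainders carrying the one-sided factor $U(x)+U(y)-U(x+y)$, or its counterpart for $(x-y,y,x)$. Two facts should then give each grouped term a sign:
\begin{itemize}
\item[(i)] the monotonicity of $e'$, combined with $e'(0)=0$ and $e\ge0$;
\item[(ii)] the algebraic balance of $U$: exact additivity when $U(\omega)=\omega$, and subadditivity/monotonicity when $U=\omega^\rho(1+\omega)^{-\beta}$, which is where the conditions $\rho\ge1$ and $\beta\le\rho-1$ enter.
\end{itemize}
The result is $\frac{\mathrm d}{\mathrm dt}\mathcal{E}_e[f]=-\mathcal D_e[g]\le0$ with an explicit nonnegative dissipation $\mathcal D_e[g]$. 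I take Lemma~\ref{Lemma:EntropyTest} and Lemma~\ref{lemmeentro} as supplying exactly this identity.

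\emph{Existence.} I would truncate the collision kernel to triads with $\varepsilon\le y<x\le R$, preserving the triad symmetry so that the grouping above still holds and $\mathcal{E}_e[f^\varepsilon]\le\mathcal{E}_e[f_0]$ uniformly. The truncated problem is a Lipschitz ODE in $L^1$, so it has a global solution. Because $e(r)/r\to\infty$, the de la Vall\'ee-Poussin criterion gives local equi-integrability of $g^\varepsilon$ on every $[a,b]\subset(0,\infty)$. Controlling $\partial_t\int g^\varepsilon\varphi$ through the weak form gives equicontinuity in time. Extracting a subsequence, $g^\varepsilon\to g$ in $C([0,T];w\text{-}\mathcal M)$. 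On compacts of $(0,\infty)$ the limit is absolutely continuous thanks to equi-integrability, so any singular mass can only concentrate at $\omega=0$. That is the explicit atom, which I would isolate and discard.

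To pass to the limit in \eqref{thm:weak1} and \eqref{thm:weak2}, fix $\varphi\in C^2_c((0,\infty))$. Then $H_\varphi(x,y)=O(y^2)$ as $y\to0$ (a second difference, with $\varphi(y)=0$ for small $y$), and $H_\varphi$ is bounded on its support; the same holds for $\mathcal H_\varphi$ by a Taylor expansion of $U\varphi$. This decay kills the contribution of the atom and of small $y$. The quadratic term then converges by weak$\times$weak convergence on product sets, using equi-integrability to handle the boundary strips.

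\emph{Long-time behaviour and main obstacle.} Integrating the entropy identity in time gives $\int_0^\infty\mathcal D_e[g(s)]\,\mathrm ds\le\mathcal{E}_e[f_0]$, and this bound passes to the limit by lower semicontinuity of the convex dissipation. Consider any sequence $t_n\to\infty$. The shifted solutions $g(t_n+\cdot)$ are precompact by the same bounds, and any limit $\bar g$ satisfies $\int_0^1\mathcal D_e[\bar g(s)]\,\mathrm ds=0$. The step I expect to be hardest is the rigidity statement that $\mathcal D_e[\bar g]=0$ forces $\bar g=0$ on $(0,\infty)$. I would argue as follows. Vanishing of the terms $g_ag_b\bigl(e'(g_c)-e'(g_d)\bigr)$ forces $\bar g(x)\bar g(y)=0$, or equal values of $e'$, on triads. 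A nonzero equilibrium would then have to be constant along triads, and an additive-chain argument shows such a constant must be $0$, using strict convexity (which I would obtain by adding $\epsilon r^2$ to $e$ if needed). Once $\bar g\equiv0$, uniqueness of the limit gives $f(t)\to0$ in $L^1_{\mathrm{loc}}$, with strong convergence following from the uniform entropy bound together with weak convergence to $0$ of a nonnegative function.
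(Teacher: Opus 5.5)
You have the right overall architecture (entropy inequality, compactness, time translates), but there are genuine gaps, the main one in the long-time step.

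\textbf{The long-time step.} You postulate a detailed-balance dissipation, a sum of terms $g_ag_b\bigl(e'(g_c)-e'(g_d)\bigr)$ whose vanishing would make $e'(\bar g)$ constant along triads. Lemma~\ref{Lemma:EntropyTest} provides nothing of this kind, and the paper stresses that there is no detailed balance. What the lemma gives is a one-sided bound $\int e'(Uf)\,\mathcal Q[f]\,\mathrm d\omega\le -A_3-(\text{nonnegative})$, where $A_3=2\iint_{\omega_1<\omega}U(\omega_1)U(\omega-\omega_1)f(\omega)f(\omega_1)\,e'(Uf)(\omega)$.

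Even granting your structure, two steps fail.
\begin{itemize}
\item A dissipation containing the nonlinear factor $e'(g)$ does not pass to the limit under the weak $L^1_{\mathrm{loc}}$ convergence that the entropy supplies. It is also not convex in $g$, so ``lower semicontinuity of the convex dissipation'' is unsupported.
\item No additive-chain or strict-convexity argument can show that a triad-invariant $\bar g$ vanishes. In Case A, $g\equiv c$ is the Rayleigh--Jeans state $f=c/\omega$. For it, both collision integrands vanish identically on the resonant sets, e.g.\ $f_1f_2-ff_1-ff_2=0$ on $\omega=\omega_1+\omega_2$. Only integrability of the limit could exclude it, and you never invoke integrability.
\end{itemize}

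The missing idea is to apply the lemma with the \emph{linear} entropy $e(r)=r$. This is admissible, since the lemma only needs $C^1$, convexity, $e(0)=0$ and $e\ge0$. Then $\mathcal E_e[f]=\int f$, $e'\equiv1$, and one gets $2\int_0^\infty\mathfrak W(s)\,\mathrm ds\le\int f_0$, where
\[
\mathfrak W(s)=\iint_{\omega_1<\omega}U(\omega_1)U(\omega-\omega_1)f(s,\omega)f(s,\omega_1)\,\mathrm d\omega_1\,\mathrm d\omega
\]
is \emph{bilinear} in $f$. This quantity does survive weak limits of time translates: approximate the nonnegative kernel from below by continuous tensor products, and use weak convergence of each factor, which is uniform in time. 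Moreover, $\mathfrak W[F]=0$ forces $F=0$ immediately. If $F\not\equiv0$, split $\{F>0\}\cap(a,b)$ at a point $c$ into two sets of positive measure; on their product the integrand is strictly positive. No triad or strict-convexity analysis is needed.

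\textbf{The existence step.} Your existence step never uses energy conservation or the hypothesis $\rho-\beta<2$, and both are indispensable. The kernels $H_\varphi$ and $\mathcal H_\varphi$ are not compactly supported: on the strips $\{x-y\in\operatorname{supp}\varphi\}$ and $\{y\in\operatorname{supp}\varphi\}$ the variable $x$ is unbounded. Hence both the uniform Lipschitz bound on $t\mapsto\int g_N\varphi$ and the passage to the limit in the quadratic term need global control of $\int g_N$ and of the tails $\int_R^\infty g_N$. The paper obtains this from conservation of $\int\omega U f_N$ at the cut-off level, together with the mass bound.

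Your claim that $\mathcal H_\varphi$ is bounded on its support is false for $\rho-\beta\in(1,2)$. Writing $F(z)=z^\rho(1+z)^{-\beta}$, on $\{y\in\operatorname{supp}\varphi\}$ one has
\[
\mathcal H_\varphi\approx-2yF'(x)\varphi(y)\sim x^{\rho-\beta-1}.
\]
This is exactly where $\rho-\beta<2$ enters: it lets you bound this term by the first moment $\int x\,g_N\le E_3$, with tails of size $O(R^{\rho-\beta-2})$.

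Two smaller points. If you use a different truncation (one with a lower cut-off $\varepsilon$), you must re-verify that it preserves both the entropy inequality and this energy identity. Also, the structural condition on $U$ is super-additivity, not subadditivity.
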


We also recall the definition of local Radon measures, which will be used throughout  the paper. 
\begin{definition}[Local Radon measures]
	Let \(\Omega \subset \mathbb{R}\) be an open set. A measure \(\mu\) on \(\Omega\) is called a \emph{local Radon measure} if
	\begin{enumerate}
		\item \(\mu\) is a Borel measure on \(\Omega\),
		\item \(\mu\) is finite on every compact subset of \(\Omega\), that is,
		\[
		\mu(K) < \infty \quad \text{for all compact } K \subset \Omega,
		\]
		\item \(\mu\) is inner regular, namely, for every Borel set \(A \subset \Omega\),
		\[
		\mu(A) = \sup \{ \mu(K) : K \subset A,\; K \text{ compact} \}.
		\]
	\end{enumerate}
	The space of local Radon measures on \(\Omega\) is denoted by
	\[
	\mathcal{M}_{\mathrm{loc}}(\Omega),
	\]
	and the cone of nonnegative local Radon measures on \(\Omega\) is denoted by
	\[
	\mathcal{M}_{+,\mathrm{loc}}(\Omega).
	\]
	If \(\mu\) is a nonnegative Radon measure on \(\Omega\) with finite total mass,
	\[
	\mu(\Omega)<\infty,
	\]
	then we write
	\[
	\mu\in \mathcal{M}_{+}(\Omega).
	\]
\end{definition}

\begin{remark}\label{Remark:Cascade}
The construction of weak solutions is based on a cut-off procedure. More precisely,
one first constructs the unique solution \(f_N\) of the cut-off equation
\eqref{Cutoff2-new}, and then passes to the limit as \(N\to\infty\). For the cut-off
problem, the energy is conserved. It is therefore useful to trace the fate, under
this limiting process, of the conserved energy
\[
\int_0^\infty \mathrm d\omega\, \omega\,U(\omega) f_N(t,\omega)
=
\int_0^\infty \mathrm d\omega\, \omega\,U(\omega) f_0(\omega),
\]
which is guaranteed at the cut-off level by Lemma~\ref{Lemma:EnergyCutoff}.

By weak \(L^1_{\mathrm{loc}}\) lower semicontinuity, applied first on compact
subsets of \((0,\infty)\) and then combined with monotone convergence, we obtain
\[
\int_0^\infty \mathrm d\omega\, \omega\,U(\omega) f(t,\omega)
\le
\int_0^\infty \mathrm d\omega\, \omega\,U(\omega) f_0(\omega)
\qquad \text{for all } t\ge 0.
\]
In general, however, equality need not persist after the passage to the limit.
The possible loss of energy can, a priori, arise from two distinct mechanisms:
\begin{enumerate}
	\item[(i)] the formation of an atom \(\alpha(t)\delta_{\omega=0}\) in the
	full measure limit. This mechanism, however, is energetically inert, since
	\(\omega U(\omega)\big|_{\omega=0}=0\), and therefore such an atom cannot
	carry any energy;
	
	\item[(ii)] the escape of mass, and hence of energy, to \(\omega=\infty\),
	which is not detected by the \(L^1_{\mathrm{loc}}\)-topology.
\end{enumerate}

Thus, any energy lost in the limiting process can only be lost through the second
mechanism, namely through escape to infinite frequency. Combining this observation
with the relaxation result
\[
\lim_{t\to\infty} f(t,\cdot)=0
\qquad\text{in } L^1_{\mathrm{loc}}((0,\infty)),
\]
we conclude that the finite-frequency energy is eventually transported to
\(\omega=\infty\). In other words, although the inequality
\[
\int_0^\infty \mathrm d\omega\, \omega U(\omega) f(t,\omega)
<
\int_0^\infty \mathrm d\omega\, \omega U(\omega) f_0(\omega)
\]
may already occur for some positive times \(t>0\), the remaining energy at finite
frequencies also disappears in the long-time limit. This describes precisely the
energy-cascade scenario predicted by wave turbulence theory: energy is    transferred toward arbitrarily
large frequencies. We refer also to
\cite{soffer2019energy,staffilani2024energy} for related results of this type.
\end{remark}

\begin{remark}\label{Remark:EntropyRole}
A noteworthy point in the compactness argument is that the approximating sequence discussed in Remark \ref{Remark:Cascade}
\[
	f_N(t,\omega)\,\mathrm d\omega
\]
is obtained, at first, only as a sequence of nonnegative Radon measures on
\([0,\infty)\). The a priori bounds available at the cut-off level yield tightness
and boundedness in the space of measures, but they do not by themselves exclude
the formation of singular components in the limit. Thus, after extraction, one
only knows a priori that
\[
	\lim_{N\to\infty}f_N(t,\omega)\,\mathrm d\omega = \mu_t
	\qquad\text{narrowly in }\mathcal M_+([0,\infty)).
\]
The entropy estimates in Section \ref{Sec:Entro}
 play a decisive role in improving this information. They
imply local uniform integrability away from the origin and therefore rule out
singular concentration on every compact subinterval of \((0,\infty)\). Consequently,
the only possible singular part of the limiting measure is concentrated at the
degenerate point \(\omega=0\). Hence the full measure limit has the form (see Proposition \ref{Prop:Existence:U=x:Hphi} and Proposition \ref{prop:global-Ux2-Hphi})
\begin{equation}\label{mu}
	\mu_t=\alpha(t)\delta_{\omega=0}+f(t,\omega)\,\mathrm d\omega,
	\qquad \alpha(t)\ge0,
\end{equation}
with
\[
	f(t,\cdot)\in L^1_{\mathrm{loc}}((0,\infty)).
\]

This identification is one of the key consequences of the entropy structures in Section \ref{Sec:Entro}.
Indeed, the weak compactness supplied by measure theory alone would only produce
a Radon-measure-valued limit. The entropy estimates upgrade this measure-valued
limit to a genuine locally integrable density away from \(\omega=0\). In this
sense, the entropy structures do not merely provide an a priori bound; they
determine  the precise regularity and singular structure of the limit.
The possible atom at \(\omega=0\) is harmless for the weak formulation used in
the paper, since \(U(0)=0\) and the equation is written in terms of the weighted
density \(g=Uf\). Thus this atomic component is invisible to the dynamics of
\(g\), while the regular part \(f(t,\omega)\) is the actual \(L^1_{\mathrm{loc}}\)
weak solution constructed in the limit.
\end{remark}

\begin{remark}\label{Remark:Atom}
Several comments are in order concerning the constant \(\alpha(t)\ge0\) appearing
in the decomposition \eqref{mu} (see Remark \ref{Remark:EntropyRole}, Proposition \ref{Prop:Existence:U=x:Hphi} and Proposition \ref{prop:global-Ux2-Hphi})
of a limit point in the narrow topology of the measures
\[
	\bigl\{f_N(t,\omega)\,\mathrm d\omega\bigr\}_{N\ge N_0}
	\qquad\text{in }\mathcal M([0,\infty)).
\]
The measure \(\mu_t\) is an auxiliary object arising from compactness in the
space of Radon measures on the closed half-line \([0,\infty)\). A priori, such a
limit could contain singular components. The entropy estimates exclude singular
concentration on every compact subset of \((0,\infty)\), and therefore any
remaining singular part can only be supported at the degenerate point
\(\omega=0\). This gives the above decomposition, with
\[
	f^{\mathrm{reg}}(t,\cdot)\in L^1_{\mathrm{loc}}((0,\infty)).
\]

The important point is that the actual weak solution constructed in this paper
is the regular part \(f^{\mathrm{reg}}\), not the full measure \(\mu_t\). Indeed,
the weak formulations \eqref{thm:weak1} and \eqref{thm:weak2} are written in
terms of the weighted density
\[
	g(t,\omega)=U(\omega)f(t,\omega).
\]
Since \(U(0)=0\) in both Case~A and Case~B, an atom located at \(\omega=0\) is
annihilated by the factor \(U(\omega)\). Consequently, such an atomic component
contributes neither to the left-hand side nor to the collision terms in the
weak formulation. Its time evolution is therefore invisible to the equation
written in terms of \(g\). In particular, the weak formulation imposes no
condition on the coefficient \(\alpha(t)\).

Thus the possible nonuniqueness of \(\alpha(t)\) should be understood as an
artifact of embedding the approximating sequence into the larger space
\(\mathcal M([0,\infty))\). The approximation scheme identifies the locally
integrable density on \((0,\infty)\), which is the object entering the weak
formulation. It does not canonically determine how much mass, if any, is stored
at the single degenerate point \(\omega=0\), because that mass is invisible to
the weighted equation.

The function \(f^{\mathrm{reg}}\) obtained from the compactness argument is a
bona fide \(L^1_{\mathrm{loc}}\) weak solution of \eqref{3wave}, in the sense
specified in Theorem~\ref{thm}.  
\end{remark}

\begin{remark}\label{Remark:KZ}
The relaxation statement
\[
	\lim_{t\to\infty} f(t,\cdot)=0
	\qquad\text{in } L^1_{\mathrm{loc}}((0,\infty))
\]
should not be interpreted as contradicting the existence or physical relevance of
Kolmogorov--Zakharov spectra. Rather, it is a rigidity statement within the
particular finite-entropy \(L^1_{\mathrm{loc}}\) framework used in this paper.

Indeed, the Kolmogorov--Zakharov stationary spectra associated with
\eqref{3wave} are formal scale-invariant power laws of the form
\[
	f_{\mathrm{KZ}}(\omega)\sim \omega^{-\kappa},
\]
where the exponent \(\kappa\) depends on the interaction weight \(U\); see
\cite{Nazarenko:2011:WT,zakharov2012kolmogorov}.  For the physically relevant
values of \(\kappa\), they typically fail either to be locally integrable near
\(\omega=0\), or to satisfy the entropy condition
\[
	\mathcal E_e[f]
	=
	\int_{[0,\infty)}\,\mathrm d\omega
	\frac{e(U(\omega)f(\omega))}{U(\omega)}
	<\infty
\]
imposed in Theorem~\ref{thm}. Thus the KZ spectra lie outside the solution class
selected by the entropy method.

The conclusion \(f(t,\cdot)\to0\) in \(L^1_{\mathrm{loc}}\) therefore means the
following: among the global weak solutions generated by finite-entropy initial
data through the present compactness procedure, no nonzero stationary profile can
persist on bounded frequency intervals as \(t\to\infty\). In this sense, the
entropy structure enforces a strong local-in-frequency rigidity. It rules out
nontrivial finite-frequency asymptotic states within the finite-entropy class,
but it does not rule out KZ spectra as formal stationary cascades outside that
class.

This interpretation is consistent with the usual cascade picture. The convergence
to zero is only local in frequency and does not assert convergence in a global
energy topology. In view of Remark~\ref{Remark:Cascade}, the disappearance
of the local density should be understood as the transfer of energy toward
arbitrarily large frequencies, rather than as relaxation to a thermodynamic
equilibrium with conserved finite-frequency mass. Thus the result complements,
rather than contradicts, the Kolmogorov--Zakharov description: KZ spectra describe
formal stationary flux states, while   Theorem \ref{thm} describes the rigidity of
finite-entropy \(L^1_{\mathrm{loc}}\) weak solutions constructed in the present
framework.
\end{remark}

\begin{remark}
	The assumption that $f_0$ is compactly supported in $(0,\infty)$ in 
	Theorem~\ref{thm} is purely technical and can be relaxed. It is used in 
	Proposition~\ref{Prop:Existence:U=x:Hphi} to guarantee the existence of $N_0$ such that 
	$f_{N,0} = f_0$ for all $N \geq N_0$, thereby simplifying the identification 
	of the initial datum after passage to the limit. More general initial data, 
	integrable against the natural weights and satisfying the finite-entropy 
	condition of Theorem~\ref{thm}, can be accommodated by approximating 
	$f_0$ by a sequence of compactly supported data and exploiting the entropy 
	and moment estimates uniformly along this approximation. We have chosen not 
	to pursue this generalization here, as it would lengthen the
	paper without affecting the conceptual content of the main results.
\end{remark}

 \section{Entropy 
 	structures}\label{Sec:Entro}

The following Lemma is fundamental for the paper. It reveals a new entropy, which has not been observed in the physics literature. 
\begin{lemma}
\label{Lemma:EntropyTest}
Let \(f\) be a suitable non-negative regular solution to \eqref{3wave}, and let
\(e:[0,\infty)\to[0,\infty)\) be a  function satisfying \[
e\in C^1([0,\infty)), \qquad e \text{ is convex}, \qquad e(0)=0, \qquad e\ge 0.
\] 
 Assume that 
 $U(x)=U(|x|)\ge 0$ and
\begin{equation}\label{UCondition}
	U(x+y)\ge U(y) + U(x-y),
	\qquad
	U(x)\ge U(y),
	\qquad
	\text{for all } (x,y)\in[0,\infty)^2 \text{ with } x\ge y.
\end{equation}

Moreover, we assume that \(U\) only vanishes on a set of zero measure.

Then we have the following inequality:\begin{equation}\label{Lemma:EntropyTest:1}
	\frac{\mathrm d}{\mathrm dt}\,\mathcal{E}_e[f]
	=
	\int_{[0,\infty)}	\mathrm d\omega \partial_t\!\left(\frac{e[Uf](\omega)}{U(\omega)}\right)\,
	\le 0 .
\end{equation}

Moreover, we have
 that for every \(t>0\),
	\begin{equation}\label{eq:dissipation-assumption}
	2	\int_0^t \mathrm{d}s
		\int_0^\infty \mathrm{d}\omega
		\int_0^\omega \mathrm{d}\omega_1\,
		U(\omega_1)U(\omega-\omega_1)\,f(s,\omega)\,f(s,\omega_1)\,
		e'[Uf](\omega)
		\le
		\mathcal{E}_e[f_0].
	\end{equation}

\end{lemma}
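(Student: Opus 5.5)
The plan is to compute $\frac{\mathrm d}{\mathrm dt}\mathcal E_e[f]$ directly and to reduce it to a pointwise inequality on resonant triads. Since $\partial_t\bigl(e(Uf)/U\bigr)=e'(Uf)\,\partial_t f$, we have
\[
\frac{\mathrm d}{\mathrm dt}\mathcal E_e[f]=\int_0^\infty \mathrm d\omega\,\mathcal Q[f](\omega)\,\varphi(\omega),\qquad \varphi:=e'[Uf].
\]
This is the weak form of $\mathcal Q$ tested against $\varphi$. Two preliminary facts are needed. First, $e'\ge0$ and $e'$ is nondecreasing: $e$ is convex, $e\ge0$ and $e(0)=0$, so $e'(0)\ge0$, and monotonicity of $e'$ then gives $e'\ge 0$ everywhere. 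Second, writing $g=Uf$ (so $f=g/U$ a.e., which uses that $U$ vanishes only on a null set), every term of $\mathcal Q$ can be expressed through $g$ and ratios of $U$.

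Next I would parametrize both integrals in \eqref{3waveOp} by triads $a,b\ge0$, $c=a+b$. The first integral is used with $\omega=c$, and the second with $\omega=a$, $\omega_1=b$, $\omega_2=c$, symmetrized in $a\leftrightarrow b$. After this the integrand becomes
\[
e'(g_c)\Bigl[g_ag_b-g_cg_a\tfrac{U_b}{U_c}-g_cg_b\tfrac{U_a}{U_c}\Bigr]
-2e'(g_a)\Bigl[g_ag_b\tfrac{U_c}{U_a}-g_bg_c-g_ag_c\tfrac{U_b}{U_a}\Bigr],
\]
integrated over $\mathrm da\,\mathrm db$, with the last bracket also symmetrized. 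The terms $e'(g_c)\,g_c\,g_a\,U_b/U_c$ are exactly the integrand of \eqref{eq:dissipation-assumption}, rewritten in the variables $\omega=c$, $\omega_1=a$, $\omega-\omega_1=b$. I would keep this term (with its factor $2$ after symmetrization) aside as the dissipation. The core claim is then that the remaining gain terms
\[
e'(g_c)g_ag_b+2e'(g_a)g_bg_c+2e'(g_a)g_ag_c\tfrac{U_b}{U_a}
\]
are dominated by the remaining losses
\[
2e'(g_a)g_ag_b\tfrac{U_c}{U_a}.
\]
Here \eqref{UCondition} enters, read as $U_c\ge U_a+U_b$ for $c=a+b$, together with $U_c\ge\max(U_a,U_b)$.

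To prove the domination, I would split into cases according to the ordering of $g_a,g_b,g_c$. The monotonicity of $e'$ lets one trade $e'(g_c)g_ag_b$ against $e'(g_a)g_ag_b$ when $g_c\le g_a$, and trade $g_c$ against $g_a$ in products like $e'(g_a)g_bg_c$. Meanwhile, the splitting $U_c/U_a\ge 1+U_b/U_a$ supplies exactly one loss term of size $e'(g_a)g_ag_b$ and one of size $e'(g_a)g_ag_b\,U_b/U_a$ to absorb the gains. In the opposite regime, when $g_c$ is the largest, I would instead move the gains onto the dissipation $e'(g_c)g_c g_a U_b/U_c$ and its $a\leftrightarrow b$ partner, using $U_c\ge U_a,U_b$. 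This should show that the total is bounded by $-2\int e'(g_c)g_cg_aU_b/U_c$. Integrating in time on $[0,t]$ and using $\mathcal E_e[f(t)]\ge0$ then gives both \eqref{Lemma:EntropyTest:1} and \eqref{eq:dissipation-assumption}.

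The main obstacle is this triad bookkeeping. The gains carry total weight $3$, while the naive loss weight is only $2(1+U_b/U_a)$. So the symmetrization in $a\leftrightarrow b$ must be used carefully, and in the region where $g_c$ dominates some gain must genuinely be charged against part of the $U_b/U_c$ dissipation. This raises the question of whether the full factor $2$ in \eqref{eq:dissipation-assumption} survives; if it does not, I would first try a sharper case split on $\min(g_a,g_b)$ versus $g_c$. Justifying the interchange of integrals and the time differentiation is routine for ``suitable regular'' solutions and is not the issue.
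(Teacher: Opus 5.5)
Your plan has a genuine gap: the triad-by-triad domination you aim for is false, so no case split can close it. Take $U(x)=x$, for which even $U(a+b)=U(a)+U(b)$ holds, and $e(r)=r^2/2$. Consider the triad $(a,b,c)=(2,1,3)$ with $g_a=g_c=1$ and $g_b=0$. Your $a\leftrightarrow b$-symmetrized integrand is
\[
\begin{aligned}
&e'(g_c)\Bigl[g_ag_b-\frac{g_c(g_aU_b+g_bU_a)}{U_c}\Bigr]
-e'(g_a)\Bigl[\frac{g_ag_bU_c}{U_a}-g_bg_c-\frac{g_ag_cU_b}{U_a}\Bigr]\\
&\qquad-e'(g_b)\Bigl[\frac{g_ag_bU_c}{U_b}-g_ag_c-\frac{g_bg_cU_a}{U_b}\Bigr],
\end{aligned}
\]
and it equals $-\tfrac13+\tfrac12=\tfrac16>0$. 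The $-\tfrac13$ is the \emph{entire} dissipation density of \eqref{eq:dissipation-assumption} at this triad. The $+\tfrac12$ is the gain $e'(g_a)g_ag_cU_b/U_a$, whose would-be partner loss $e'(g_a)g_ag_bU_c/U_a$ vanishes with $g_b$. By continuity the integrand stays positive when $g_c$ is slightly below $g_a$, so your ``$g_c\le g_a$'' regime fails too. It also stays positive on a positive-measure set of triads when $g$ takes nearby values on small intervals around $1,2,3$. Hence monotonicity of $e'$ plus comparisons of the $U$'s inside one triad cannot give even $\frac{\mathrm d}{\mathrm dt}\mathcal E_e\le0$, let alone the factor $2$. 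The sign appears only after integration.

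What is missing is the paper's mechanism, which moves terms between \emph{different} triads.

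First, convexity is used in the form $(u-v)e'(v)\le e(u)-e(v)$, which produces $e$-terms that cancel globally. For example,
\[
g(\omega_1)g(\omega+\omega_1)e'(g(\omega))\le g(\omega+\omega_1)\bigl[e(g(\omega_1))-e(g(\omega))\bigr]+g(\omega+\omega_1)g(\omega)e'(g(\omega)),
\]
and the first piece integrates to zero under $\omega\leftrightarrow\omega_1$.

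Second, the remaining terms are regrouped by \emph{pairs} of frequencies. After the relabelling $(\omega+\omega_1,\omega)\mapsto(\omega_1,\omega)$, take a pair $y<x$. The gains, which originate in the triad $(y,x-y,x)$ (one of them after the convexity step), carry the factor $U(x)\bigl(U(x-y)+U(y)\bigr)e'(g(y))$. They are set against the loss $U(x)U(x+y)e'(g(y))$ from the triad $(y,x,x+y)$. This is exactly what \eqref{UCondition} compares. It is not the within-triad inequality $U_c\ge U_a+U_b$ you read it as: for instance, $U(x)=\min\{x,(1+x)/2\}$ satisfies \eqref{UCondition} but $U(2)<2U(1)$.

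Third, the first-integral gain is absorbed by the part $\omega_1\le\omega$ of the second-integral loss, after using $U(\omega+\omega_1)\ge U(\omega)$. This uses convexity and the relabelling $(\omega-\omega_1,\omega_1)\mapsto(\omega',\omega)$, and it leaves
\[
-2\iint g(\omega')\,[g e'(g)-e(g)](\omega)\,\mathbf 1_{\{\omega'\ge\omega-\omega'\}}\le0 .
\]
The first-integral loss is never spent, so it survives intact as the dissipation. That is why \eqref{eq:dissipation-assumption} holds with the full factor $2$.
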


\begin{proof}
Using $e'[Uf]$ as a test function, we obtain
\begin{equation}\label{Lemma:EntropyTest:E1}
	\begin{aligned}
	&	\int_{[0,\infty)}\, \mathrm d\omega \mathcal{Q}[f](\omega)\, e'[Uf](\omega)\\
		= {} & 
		\iiint_{[0,\infty)^3}\,
		\mathrm d\omega\, \mathrm d\omega_1\, \mathrm d\omega_2
		\delta(\omega-\omega_1-\omega_2)
	U(
	\omega_1)U(
	\omega_2)	\Big(
		f_1 f_2
		- f f_1
		- f f_2
		\Big)
		e'[Uf]  \\
		& - 2
		\iiint_{[0,\infty)^3}\,
		\mathrm d\omega\, \mathrm d\omega_1\, \mathrm d\omega_2
		\delta(\omega_2-\omega-\omega_1)
		U(
	\omega_1)U(
	\omega_2)	\Big(
		f_1 f
		- f_1 f_2
		- f f_2
		\Big)
		e'[Uf]  \\
		= {} &
		\int_{[0,\infty)} \mathrm d\omega
		\int_{[0,\omega]} \mathrm d\omega_1
		U(
	\omega_1)U(
	\omega-\omega_1)	\Big(
		f(\omega_1) f(\omega-\omega_1)
		- f(\omega) f(\omega_1)
		- f(\omega) f(\omega-\omega_1)
		\Big)
		e'[Uf](\omega) \\
		& -
	2	\int_{[0,\infty)} \mathrm d\omega
		\int_{[0,\infty)} \mathrm d\omega_1
		U(
	\omega_1)U(
	\omega_1+\omega)	\Big(
		f(\omega) f(\omega_1)
		- f(\omega_1) f(\omega+\omega_1)
		- f(\omega) f(\omega+\omega_1)
		\Big)
		e'[Uf](\omega).
	\end{aligned}
\end{equation}

We then define
\begin{equation}\label{Lemma:EntropyTest:E1a}
	\begin{aligned}
		A_1 := {} &
		\int_{[0,\infty)} \mathrm d\omega
		\int_{[0,\omega]} \mathrm d\omega_1\,
		U(\omega_1)U(\omega-\omega_1)
		\Big(
		f(\omega_1) f(\omega-\omega_1)
		- f(\omega) f(\omega_1)
		- f(\omega) f(\omega-\omega_1)
		\Big)
		\,e'[Uf](\omega) \\
		&\;-\;
		2\int_{[0,\infty)} \mathrm d\omega
		\int_{[0,\infty)} \mathrm d\omega_1\,
		U(\omega_1)U(\omega_1+\omega)
		\,f(\omega) f(\omega_1)\,
		\mathbf{1}_{\omega_1\le \omega}\,e'[Uf](\omega), \\[0.5em]
		A_2 := {} &
		-2\int_{[0,\infty)} \mathrm d\omega
		\int_{[0,\infty)} \mathrm d\omega_1\,
		U(\omega_1)U(\omega_1+\omega)
		\Big(
		f(\omega) f(\omega_1)\mathbf{1}_{\omega_1\ge \omega} \\
		&\hspace{6.3em}
		- f(\omega_1) f(\omega+\omega_1)
		- f(\omega) f(\omega+\omega_1)
		\Big)
		\,e'[Uf](\omega).
	\end{aligned}
\end{equation}

Next, we estimate $A_1$. By the symmetry between $\omega_1$ and $\omega-\omega_1$, we can write
\begin{equation*}
	\begin{aligned}
		A_1 = {} &
		\int_{[0,\infty)} \mathrm d\omega
		\int_{[0,\omega]} \mathrm d\omega_1
		\Big(
	U(\omega_1)U(\omega-\omega_1)	f(\omega_1)\, f(\omega-\omega_1)
		- 2U(\omega_1)U(\omega-\omega_1) f(\omega)\, f(\omega_1)
		\Big)\,
		e'[Uf](\omega)\\
		&\;-\;
		2\int_{[0,\infty)} \mathrm d\omega
		\int_{[0,\infty)} \mathrm d\omega_1\,
		U(\omega_1)U(\omega+\omega_1)
		\,f(\omega) f(\omega_1)\,
		\mathbf{1}_{\omega_1\le \omega}\,
		e'[Uf](\omega), \\
		\le {} &
		2 \int_{[0,\infty)} \mathrm d\omega
		\int_{[0,\omega]} \mathrm d\omega_1U(\omega_1)U(\omega-\omega_1)
		f(\omega_1)\, f(\omega-\omega_1)\,
		\mathbf{1}_{\{\omega_1 \ge \omega-\omega_1\}}\,
		e'[Uf](\omega) \\
		& -
		2 \int_{[0,\infty)} \mathrm d\omega
		\int_{[0,\omega]} \mathrm d\omega_1
	U(\omega)U(\omega_1)	f(\omega)\, f(\omega_1)\,
		e'[Uf](\omega) \ - \ A_3,
	\end{aligned}
\end{equation*}
where we have used the fact that $U(\omega+\omega_1)\ge U(\omega)$
and we have set
\begin{equation*}
	\begin{aligned}
		A_3 : = {} &
		2\int_{[0,\infty)} \mathrm d\omega
		\int_{[0,\omega]} \mathrm d\omega_1
		U(\omega_1)U(\omega-\omega_1) f(\omega)\, f(\omega_1)
		\,
		e'[Uf](\omega).
	\end{aligned}
\end{equation*}

We then estimate $A_1$ as
\begin{equation}\label{Lemma:EntropyTest:E2}
	\begin{aligned}
		A_1 \le {} &
		2\int_{[0,\infty)} \mathrm d\omega
		\int_{[0,\omega]} \mathrm d\omega_1
U(\omega-\omega_1)	f(\omega-\omega_1)	\big(
		U(\omega_1)f(\omega_1) - U(\omega)f(\omega)
		\big)
				e'[Uf](\omega)\,
				\mathbf{1}_{\{\omega_1 \ge \omega-\omega_1\}} \\
		& +
	2	\int_{[0,\infty)} \mathrm d\omega
		\int_{[0,\omega]} \mathrm d\omega_1
	U(\omega-\omega_1)	f(\omega-\omega_1)\, U(\omega)f(\omega)
		e'[Uf](\omega) \,
		\mathbf{1}_{\{\omega_1 \ge \omega-\omega_1\}}\\
		& -
		2 \int_{[0,\infty)} \mathrm d\omega
		\int_{[0,\omega]} \mathrm d\omega_1
	U(\omega)	f(\omega)\,U(\omega_1) f(\omega_1)
		e'[Uf](\omega) \ - \ A_3.
	\end{aligned}
\end{equation}
Noting that $e$ is a convex function, we have
\[
\big(
U(\omega_1)f(\omega_1) - U(\omega)f(\omega)
\big)
e'[Uf](\omega)
\le
e[Uf](\omega_1) - e[Uf](\omega).
\]
Combining this inequality with \eqref{Lemma:EntropyTest:E2}, we obtain
\begin{equation*} 
	\begin{aligned}
		A_1 \le {} &
	2	\int_{[0,\infty)} \mathrm d\omega
		\int_{[0,\omega]} \mathrm d\omega_1
	U(\omega-\omega_1)	f(\omega-\omega_1)
		\big(
		e[Uf](\omega_1) - e[Uf](\omega)
		\big) \,
		\mathbf{1}_{\{\omega_1 \ge \omega-\omega_1\}}\\
		& +
	2	\int_{[0,\infty)} \mathrm d\omega
		\int_{[0,\omega]} \mathrm d\omega_1
	U(\omega-\omega_1)	f(\omega-\omega_1)\, U(\omega)f(\omega)
		e'[Uf](\omega) \,
		\mathbf{1}_{\{\omega_1 \ge \omega-\omega_1\}}\\
		& -
		2 \int_{[0,\infty)} \mathrm d\omega
		\int_{[0,\omega]} \mathrm d\omega_1
	U(\omega)	f(\omega)\,U(\omega_1) f(\omega_1)
		e'[Uf](\omega)  \ - \ A_3 \\
		\le {} &
		2\int_{[0,\infty)} \mathrm d\omega
		\int_{[0,\omega]} \mathrm d\omega_1
	U(\omega-\omega_1)	f(\omega-\omega_1)\, e[Uf](\omega_1)\,
		\mathbf{1}_{\{\omega_1 \ge \omega-\omega_1\}} \\
		& -
	2 \int_{[0,\infty)} \mathrm d\omega
	\int_{[0,\omega]} \mathrm d\omega_1
U(\omega)	f(\omega)\,U(\omega_1) f(\omega_1)
	e'[Uf](\omega) \\
		& +
		  2\int_{[0,\infty)} \mathrm d\omega
		\int_{[0,\omega]} \mathrm d\omega_1
	U(\omega-\omega_1)	f(\omega-\omega_1)
		\Big(
		U(\omega)f(\omega)\, e'[Uf](\omega) - e[Uf](\omega)
		\Big)\,
		\mathbf{1}_{\{\omega_1 \ge \omega-\omega_1\}}   -  A_3.
	\end{aligned}
\end{equation*}
Performing the changes of variables
\[
(\omega-\omega_1,\omega_1)\mapsto(\omega',\omega),\qquad
(\omega,\omega_1)\mapsto(\omega,\omega'),\qquad
(\omega-\omega_1,\omega)\mapsto(\omega',\omega),
\]
respectively in the three integrals above, we obtain
\begin{equation}\label{Lemma:EntropyTest:E3}
	\begin{aligned}
		A_1 \le {} &
	2	\int_{[0,\infty)} \mathrm d\omega
		\int_{[0,\omega]} \mathrm d\omega'
	U(\omega')	f(\omega')\, e[Uf](\omega) \\
		& -
		2 \int_{[0,\infty)} \mathrm d\omega
		\int_{[0,\omega]} \mathrm d\omega'
		U(\omega)f(\omega)\,U(\omega') f(\omega')
		e'[Uf](\omega) \\
		& +
		2\int_{[0,\infty)} \mathrm d\omega
		\int_{[0,\omega]} \mathrm d\omega'
	U(\omega')	f(\omega')
		\Big(
	U(\omega)	f(\omega)\, e'[Uf](\omega) - e[Uf](\omega)
		\Big)\,
		\mathbf{1}_{\{\omega' \le \omega-\omega'\}}  \ - \ A_3\\
	\le {} &
-
	2\int_{[0,\infty)} \mathrm d\omega
	\int_{[0,\omega]} \mathrm d\omega'
U(\omega')f(\omega')
	\Big(
U(\omega)	f(\omega)\, e'[Uf](\omega) - e[Uf](\omega)
	\Big)\,
	\mathbf{1}_{\{\omega' \ge \omega-\omega'\}}	 \ - \ A_3.
	\end{aligned}
\end{equation}

We now estimate $A_2$. We expand the second term in $A_2$ as follows:
\begin{equation}\label{Lemma:EntropyTest:E4}
	\begin{aligned}
		&\;
		2\int_{[0,\infty)} \mathrm d\omega
		\int_{[0,\infty)} \mathrm d\omega_1\,
		U(\omega_1)U(\omega_1+\omega)
		f(\omega_1) f(\omega+\omega_1)
		\,e'[Uf](\omega) \\
		=\;&
		2\int_{[0,\infty)} \mathrm d\omega
		\int_{[0,\infty)} \mathrm d\omega_1\,
		U(\omega_1+\omega) f(\omega+\omega_1)
		\Big[
		U(\omega_1)f(\omega_1)
		- U(\omega)f(\omega)
		\Big]
		\,e'[Uf](\omega) \\
		&\;
		+ 2\int_{[0,\infty)} \mathrm d\omega
		\int_{[0,\infty)} \mathrm d\omega_1\,
		U(\omega_1+\omega) f(\omega+\omega_1)
		U(\omega)f(\omega)
		\,e'[Uf](\omega).
	\end{aligned}
\end{equation}
By the convexity of $e$, we have the estimate
\[
\Big[
U(\omega_1)f(\omega_1)
- U(\omega)f(\omega)
\Big]
\,e'[Uf](\omega)
\le e[Uf](\omega_1) - e[Uf](\omega),
\]
which, in combination with \eqref{Lemma:EntropyTest:E4}, yields
\begin{equation}\label{Lemma:EntropyTest:E5}
	\begin{aligned}
		&\;
		2\int_{[0,\infty)} \mathrm d\omega
		\int_{[0,\infty)} \mathrm d\omega_1\,
		U(\omega_1)U(\omega_1+\omega)
		f(\omega_1) f(\omega+\omega_1)
		\,e'[Uf](\omega) \\
		\le\;&
		2\int_{[0,\infty)} \mathrm d\omega
		\int_{[0,\infty)} \mathrm d\omega_1\,
		U(\omega_1+\omega) f(\omega+\omega_1)
		\Big(
		e[Uf](\omega_1) - e[Uf](\omega)
		\Big) \\
		&\;
		+ 2\int_{[0,\infty)} \mathrm d\omega
		\int_{[0,\infty)} \mathrm d\omega_1\,
		U(\omega_1+\omega) f(\omega+\omega_1)
		U(\omega)f(\omega)
		\,e'[Uf](\omega).
	\end{aligned}
\end{equation}
Observing that
\[
2\int_{[0,\infty)} \mathrm d\omega
\int_{[0,\infty)} \mathrm d\omega_1\,
U(\omega_1+\omega) f(\omega+\omega_1)
e[Uf](\omega_1)
=
2\int_{[0,\infty)} \mathrm d\omega
\int_{[0,\infty)} \mathrm d\omega_1\,
U(\omega_1+\omega) f(\omega+\omega_1)
e[Uf](\omega),
\]
we deduce from \eqref{Lemma:EntropyTest:E5} that
\begin{equation}\label{Lemma:EntropyTest:E6}
	\begin{aligned}
		&\;
		2\int_{[0,\infty)} \mathrm d\omega
		\int_{[0,\infty)} \mathrm d\omega_1\,
		U(\omega_1)U(\omega_1+\omega)
		f(\omega_1) f(\omega+\omega_1)
		\,e'[Uf](\omega) \\
		\le\;&
		2\int_{[0,\infty)} \mathrm d\omega
		\int_{[0,\infty)} \mathrm d\omega_1\,
		U(\omega_1+\omega) f(\omega+\omega_1)
		U(\omega)f(\omega)
		\,e'[Uf](\omega).
	\end{aligned}
\end{equation}

Plugging \eqref{Lemma:EntropyTest:E6} into the expression for $A_2$, we obtain the bound
\begin{equation}\label{Lemma:EntropyTest:E7}
	\begin{aligned}
		A_2 \le {} &
		-2\int_{[0,\infty)} \mathrm d\omega
		\int_{[0,\infty)} \mathrm d\omega_1\,
		\Big(
		U(\omega_1)U(\omega_1+\omega)
		f(\omega) f(\omega_1)\mathbf{1}_{\omega_1\ge \omega} \\
		&\hspace{6.3em}
		- f(\omega)U(\omega_1+\omega) f(\omega+\omega_1)
		\big(U(\omega_1)+U(\omega)\big)
		\Big)
		\,e'[Uf](\omega) \\
		\le {} &
		-2\int_{[0,\infty)} \mathrm d\omega
		\int_{[\omega,\infty)} \mathrm d\omega_1\,
		\Big(
		U(\omega_1)U(\omega_1+\omega)
		f(\omega) f(\omega_1) \\
		&\hspace{6.3em}
		- f(\omega)U(\omega_1) f(\omega_1)
		\big(U(\omega_1-\omega)+U(\omega)\big)
		\Big)
		\,e'[Uf](\omega).
	\end{aligned}
\end{equation}
where we have used the change of variables
\((\omega_1+\omega,\omega)\mapsto(\omega_1,\omega)\)
in the second term on the right-hand side.

Since $U(\omega_1+\omega)\ge U(\omega_1-\omega)+U(\omega)$, we deduce from
\eqref{Lemma:EntropyTest:E7} that $A_2\le 0$, which, in combination with
\eqref{Lemma:EntropyTest:E3}, yields
\begin{equation}\label{Lemma:EntropyTest:E8}
	\begin{aligned}
		&\;
		\int_{[0,\infty)} \mathrm d\omega\,
		\mathcal{Q}[f](\omega)\, e'[Uf](\omega) \\
		\le {} &
		-2\int_{[0,\infty)} \mathrm d\omega
		\int_{[0,\omega]} \mathrm d\omega'\,
		U(\omega')f(\omega')
		\Big(
		U(\omega) f(\omega)\, e'[Uf](\omega)
		- e[Uf](\omega)
		\Big)\,
		\mathbf{1}_{\{\omega' \ge \omega-\omega'\}}  \ - \ A_3.
	\end{aligned}
\end{equation}
The first conclusion of the lemma follows since, by  the convexity of $e$
\[
U(\omega) f(\omega)\, e'[Uf](\omega) - e[Uf](\omega)
\ge 0,
\]

and
$$\int_{[0,\infty)}\, \mathrm d\omega \partial_t\!\left(\frac{e[Uf](\omega)}{U(\omega)}\right) \ = \ 	\int_{[0,\infty)}\, \mathrm d\omega \mathcal{Q}[f](\omega)\, e'[Uf](\omega).$$

Inequality \eqref{eq:dissipation-assumption} follows from the fact that 

\begin{equation}\label{Lemma:EntropyTest:E8a}
	\begin{aligned}
		&\;
		\int_{[0,\infty)} \mathrm d\omega\,
		\mathcal{Q}[f](\omega)\, e'[Uf](\omega) \
		\le {} 
		- \ A_3.
	\end{aligned}
\end{equation}

\end{proof}
\begin{remark}\label{Remark:entrpy}
The particular form of the entropy functional merits some heuristic explanation,
since it differs in an essential way from the entropy functionals more commonly
associated with classical kinetic equations. For the \(3\)-wave kinetic equations
studied here, there is no evident detailed-balance mechanism of the type familiar
from Boltzmann-type models. In fact, the only entropy functional for wave kinetic
equations that appears to be commonly available in the literature is the logarithmic
entropy
\[
\int_{[0,\infty)} \ln f,
\]
see, for instance, \cite{rumpf2021wave,zakharov2012kolmogorov}. This quantity,
however, is not useful  in general.

The key point in our setting is that the relevant entropy structure is not generated
by detailed balance, but rather by a one-sided algebraic balance encoded in the
interaction weight \(U\). More precisely, the structural condition
\[
U(x+y)\ge U(y)+U(x-y),
\qquad x\ge y\ge0,
\]
exposes the hidden coercive structure needed to control the nonlinear interaction
terms. After symmetrization, this super-additivity condition plays the role of a
one-sided microscopic balance: it ensures that the loss contributions dominate
the gain contributions once the convexity inequality for \(e\) is applied to the
weighted density \(g=Uf\). This observation is the central mechanism underlying
the entropy estimates developed below.

The factor \(1/U(\omega)\) in the definition of \(\mathcal E_e\) is therefore not a
mere technical weight, but an intrinsic part of the entropy structure. First,
it converts an entropy written in the variable \(g=Uf\) into a functional of
the original density \(f\). More importantly, it is exactly the normalization
for which the entropy variation produces the test function \(e'(Uf)\) used in
Lemma~\ref{Lemma:EntropyTest}. With this choice, the symmetrized nonlinear
terms generate differences of the form
\[
U(\omega_1)f(\omega_1)-U(\omega)f(\omega),
\]
which are controlled by
\[
e\bigl(U(\omega_1)f(\omega_1)\bigr)
-
e\bigl(U(\omega)f(\omega)\bigr)
\]
through the elementary convexity inequality
\[
(a-b)e'(b)\le e(a)-e(b).
\]
Thus the entropy is designed so that the algebra of the collision operator and
the convexity of \(e\) act on the same quantity, namely \(g=Uf\).

In summary, the estimate relies on the alignment of three ingredients: the
super-additivity of \(U\), which provides the structural one-sided balance; the
convexity of \(e\), which provides the differential inequality; and the change
of variables \(f\mapsto g=Uf\), together with the weight \(1/U\), which brings
these two mechanisms into the same algebraic framework. This alignment is what
ultimately gives the entropy dissipation a definite sign.
\end{remark}
\begin{lemma}\label{lemmeentro}
Let
\[
U(\omega)=\omega^\rho(1+\omega)^{-\beta}, \qquad \mbox{ or } \qquad U(\omega)=\omega^{d-1}
\qquad  \mbox{ for }  \omega\ge 0,
\]
where
\[
\rho\ge 1,
\qquad
\beta\le \rho-1, \qquad d\ge 2.
\]
Then
\[
U(x+y)\ge U(y)+U(x-y),
\qquad
U(x)\ge U(y),
\]
for all \((x,y)\in [0,\infty)^2\) with \(x\ge y\).
\end{lemma}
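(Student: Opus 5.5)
The plan is to reduce both inequalities to a single structural property: the function \(\omega\mapsto U(\omega)/\omega\) is nondecreasing on \((0,\infty)\), together with \(U(0)=0\) and \(U\ge 0\). The key observation is that the first inequality does not require the full strength of \(x+y\): since \(x+y\ge x = y+(x-y)\), it suffices to show that \(U\) is nondecreasing and superadditive. Indeed, these give
\[
U(x+y)\ \ge\ U(x)\ \ge\ U(y)+U(x-y)
\]
for \(x\ge y\ge 0\), and the second inequality \(U(x)\ge U(y)\) is just monotonicity.

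\emph{Step 1: monotonicity of the quotient.} For \(U(\omega)=\omega^\rho(1+\omega)^{-\beta}\), set \(V(s):=U(s)/s=s^{\rho-1}(1+s)^{-\beta}\) for \(s>0\). Its logarithmic derivative is
\[
\frac{V'(s)}{V(s)}=\frac{\rho-1}{s}-\frac{\beta}{1+s}
=\frac{(\rho-1)+(\rho-1-\beta)s}{s(1+s)} .
\]
This is nonnegative because \(\rho\ge1\) and \(\beta\le\rho-1\). Note that if \(\beta<0\) both terms already help, and the hypothesis \(\beta\le\rho-1\) is exactly what controls the case \(\beta>0\). For \(U(\omega)=\omega^{d-1}\) with \(d\ge2\), \(V(s)=s^{d-2}\) is trivially nondecreasing.

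\emph{Step 2: monotonicity of \(U\).} On \((0,\infty)\), \(U(s)=s\,V(s)\) is a product of two nonnegative nondecreasing functions, hence nondecreasing. It extends to \(s=0\) because \(U(0)=0\le U(s)\).

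\emph{Step 3: superadditivity.} For \(a,b>0\), use the monotonicity of \(V\) to write
\[
U(a)=a\,V(a)\le a\,V(a+b),\qquad U(b)\le b\,V(a+b).
\]
Summing gives \(U(a)+U(b)\le (a+b)V(a+b)=U(a+b)\). If \(a=0\) or \(b=0\) the inequality is trivial since \(U(0)=0\). Applying this with \(a=y\) and \(b=x-y\), and combining with Step~2 as in the chain displayed above, yields the lemma.

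No step seems to be a genuine obstacle. The only point needing care is the sign bookkeeping in Step~1, namely checking that the numerator \((\rho-1)+(\rho-1-\beta)s\) is nonnegative for all \(s\ge0\) under the stated constraints. The boundary point \(s=0\) is handled separately, using \(U(0)=0\) rather than the quotient \(V\).
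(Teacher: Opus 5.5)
Your proposal is correct and follows essentially the same route as the paper: both rest on the monotonicity of $U(\omega)/\omega$, obtained from the same logarithmic-derivative computation, and both then bound $U(y)+U(x-y)$ using that monotonicity. The differences are cosmetic. You bound it by $x\,U(x)/x=U(x)\le U(x+y)$, whereas the paper bounds it by $x\,U(x+y)/(x+y)\le U(x+y)$. You also get monotonicity of $U$ from writing $U(s)=s\cdot U(s)/s$ as a product of nonnegative nondecreasing factors, while the paper computes $U'(\omega)=\omega^{\rho-1}(1+\omega)^{-\beta-1}\bigl(\rho+(\rho-\beta)\omega\bigr)\ge 0$ directly.
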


\begin{proof}
First, we consider the case \(U(\omega)=\omega^\rho(1+\omega)^{-\beta}\). Since \(\rho\ge 1\), we have \(U(0)=0\). For \(\omega>0\), we compute
\[
U'(\omega)
=
\rho \omega^{\rho-1}(1+\omega)^{-\beta}
-\beta \omega^\rho (1+\omega)^{-\beta-1}.
\]
Equivalently, we have
\[
U'(\omega)
=
\omega^{\rho-1}(1+\omega)^{-\beta-1}
\bigl(\rho+(\rho-\beta)\omega\bigr).
\]
Since
\(
\beta\le \rho-1,
\)
we have
\(
\rho-\beta\ge 1.
\)
Therefore, we obtain
\[
\rho+(\rho-\beta)\omega>0
\qquad \text{for all } \omega>0.
\]
Hence
\(
U'(\omega)\ge 0
\) for all  \(\omega>0.\)
Thus \(U\) is nondecreasing on \([0,\infty)\). In particular, if \(x\ge y\), then
\(
U(x)\ge U(y).
\)

Next we define
\[
\mathfrak V(\omega):=\frac{U(\omega)}{\omega}
=
\omega^{\rho-1}(1+\omega)^{-\beta},
\qquad \omega>0.
\]
We show that \(\mathfrak V\) is nondecreasing. We observe that 
\[
\mathfrak V'(\omega)
=
\mathfrak V(\omega)\,
\frac{(\rho-1)+(\rho-1-\beta)\omega}{\omega(1+\omega)}.
\]
Since
\(
\rho\ge 1,
\beta\le \rho-1,
\)
we have
\(
\rho-1\ge 0,
\rho-1-\beta\ge 0.
\)
Therefore
\[
\mathfrak V'(\omega)\ge 0
\qquad \text{for all } \omega>0.
\]
Hence \(\mathfrak V\) is nondecreasing.

Now fix \(x,y\ge 0\) with \(x\ge y\). If \(x+y=0\), then \(x=y=0\), and the desired inequality is trivial. Assume \(x+y>0\). Since
\(
0\le y\le x+y,
0\le x-y\le x+y,
\)
and \(\mathfrak V\) is nondecreasing, we have
\[
\mathfrak V(y)\le \mathfrak V(x+y), \qquad
\mathfrak V(x-y)\le \mathfrak V(x+y).
\]

Therefore, we can write
\[
U(y)+U(x-y)
=
y\mathfrak V(y)+(x-y)\mathfrak V(x-y)
\le
y\mathfrak V(x+y)+(x-y)\mathfrak V(x+y).
\]
Hence, we find
\[
U(y)+U(x-y)
\le
x \mathfrak V(x+y).
\]
Since \(x\le x+y\), we obtain
\[
x\mathfrak V(x+y)\le (x+y)\mathfrak V(x+y)=U(x+y).
\]
Thus, we obtain
\[
U(y)+U(x-y)\le U(x+y).
\]

For \(U(\omega)=\omega^{d-1}\), \(U\) is nondecreasing on \([0,\infty)\) for
\(d\ge2\). Moreover,
\[
\frac{U(\omega)}{\omega}=\omega^{d-2}
\]
is nondecreasing on \((0,\infty)\). The same argument used above, with
\(\mathfrak V(\omega)=U(\omega)/\omega\), gives
\[
U(y)+U(x-y)\le U(x+y)
\]
for \(x\ge y\ge0\). 
\end{proof}

\section{Approximated equations}\label{Sec:Cutoff}
Let $N>0$ be a positive integer. We define the cut-off operator by
\begin{equation}\label{Cutoff1}
	\begin{aligned}
		\mathcal{Q}_N[f](t,\omega)
		:= {} &
	\iint_{[0,\infty)^2}
	\mathrm d\omega_1\, \mathrm d\omega_2\,
	\delta(\omega-\omega_1-\omega_2)\,
	U(\omega_1)U(\omega_2)
	\Big(
	f_1 f_2\chi_{\{\omega_1,\omega_2\le N\}}
	- f f_1\chi_{\{\omega,\omega_1\le N\}}\\
	&\;
	- f f_2\chi_{\{\omega,\omega_2\le N\}}
	\Big) \\
	&\;
	- 2
	\iint_{[0,\infty)^2}
	\mathrm d\omega_1\, \mathrm d\omega_2\,
	\delta(\omega_2-\omega-\omega_1)\,
	U(\omega_1)U(\omega_2)
	\Big(
	f_1 f\chi_{\{\omega,\omega_1\le N\}}
\\	&\;	- f_1 f_2\chi_{\{\omega_1,\omega_2\le N\}}
	- f f_2\chi_{\{\omega,\omega_2\le N\}}
	\Big),
	\end{aligned}
\end{equation}
where
$$\chi_{\{\omega\le N\}} = 1 \mbox{ when } \omega\le N \mbox{ and } 0 \mbox{ elsewhere}. $$
	Let \(f_0\in L^1([0,\infty))\) be nonnegative. We define
\[
f_{N,0}(\omega):=f_0(\omega)\chi_{\{\omega\le N\}}.
\]
We consider the cutoff problem
\begin{equation}\label{Cutoff2-new}
	\begin{aligned}
		\partial_t f_N(t,\omega)&=\mathcal Q_N[f_N](t,\omega),
		\qquad (t,\omega)\in(0,\infty)\times[0,\infty),\\
		f_N(0,\omega)&=f_{N,0}(\omega),
		\qquad \omega\in[0,\infty),
	\end{aligned}
\end{equation}

\subsection{Cut-off energy, entropy and Lipschitz estimates}

We  have the following energy conservation Lemma.

\begin{lemma}\label{Lemma:EnergyCutoff}
Let \(f_N\) be a solution to \eqref{Cutoff2-new}. We have the following energy conservation law:
\begin{equation}\label{Lemma:Energy:1}
	\frac{\mathrm d}{\mathrm dt}\left(\int_0^\infty \mathrm d\omega\, f_N(t,\omega)\,U(\omega)\,\omega\right)=0.
\end{equation}
\end{lemma}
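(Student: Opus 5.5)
The plan is to test the cut-off equation \eqref{Cutoff2-new} against the weight \(\varphi(\omega)=\omega U(\omega)\) and show that \(\int_0^\infty \mathrm d\omega\,\mathcal Q_N[f_N](\omega)\,\omega U(\omega)=0\) by a direct monomial-by-monomial cancellation. The key structural point is that in \eqref{Cutoff1} each cut-off indicator is attached to the \emph{pair} of frequencies appearing in the corresponding quadratic product \(f_\cdot f_\cdot\). Hence the cut-off does not interfere with the algebra: every monomial keeps its own indicator through all the rearrangements below. I will assume the cut-off solution is regular enough that Fubini and differentiation under the integral are justified. This is plausible because the indicators confine all interactions to a bounded frequency region, so \(\mathcal Q_N[f_N]\,\omega U(\omega)\) is integrable.

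First I parametrize every triad by \(a,b\ge0\) with \(c=a+b\). In the first integral of \eqref{Cutoff1} I set \(\omega=c\), \(\omega_1=a\), \(\omega_2=b\). In the second integral I set \(\omega=a\), \(\omega_1=b\), \(\omega_2=c\). The whole quantity then becomes \(\iint_{[0,\infty)^2}\mathrm da\,\mathrm db\) of
\[
U_aU_b\bigl(f_af_b\chi_{ab}-f_cf_a\chi_{ca}-f_cf_b\chi_{cb}\bigr)\varphi_c
-2U_bU_c\bigl(f_bf_a\chi_{ab}-f_bf_c\chi_{bc}-f_af_c\chi_{ac}\bigr)\varphi_a ,
\]
where \(\chi_{xy}=\chi_{\{x,y\le N\}}\). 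I then group the terms by the type of monomial.

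\emph{Terms \(f_af_b\chi_{ab}\).} The coefficient is \(U_aU_b\varphi_c-2U_bU_c\varphi_a\). With \(\varphi=\omega U\) this equals \(U_aU_bU_c(b-a)\), where I symmetrize the factor \(2a\) over \((a,b)\) using the symmetry of \(f_af_b\chi_{ab}\). The result is antisymmetric in \(a\leftrightarrow b\), so its integral vanishes.

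\emph{Terms \(f_af_c\chi_{ac}\), with \(c=a+b\).} Here I relabel \(a\leftrightarrow b\) in the terms carrying \(f_bf_c\chi_{bc}\) so that everything is written with \(f_af_c\chi_{ac}\). The total coefficient is
\[
-2U_aU_b\,(a+b)U_c+2U_bU_c\,aU_a+2U_aU_c\,bU_b=0 .
\]
So these contributions cancel pointwise.

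Combining the two groups gives \(\int \mathcal Q_N[f_N]\,\omega U\,\mathrm d\omega=0\), which is \eqref{Lemma:Energy:1}. The main obstacle is only bookkeeping: keeping the relabelings and the factor \(2\) straight, and checking that each relabeling maps an indicator \(\chi_{xy}\) onto the indicator carried by the matching monomial. The analytic justification of the interchanges of integrals is routine given the truncation.
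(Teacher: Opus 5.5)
Your proof is correct and is essentially the paper's argument. The paper invokes the standard symmetrization to rewrite $\int \mathcal Q_N[f]\,\omega U(\omega)\,\mathrm d\omega$ as a triple integral with the symmetric weight $U(\omega)U(\omega_1)U(\omega_2)$, the pair cut-offs, and the factor $(\omega-\omega_1-\omega_2)$, which vanishes on the resonant set; your monomial-by-monomial grouping is that same computation written out. One small point: in the $f_af_b\chi_{ab}$ group no symmetrization of $2a$ is needed, since $c-2a=b-a$ already gives the antisymmetric coefficient pointwise.
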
 
\begin{proof}
	A standard symmetry argument \cite{staffilani2025evolution,staffilani2025finite,staffilani2025formation} shows that
	\begin{align}
	&	\int_{[0,\infty)} \mathcal{Q}_N[f](\omega)\,U(\omega)\,\omega \,\mathrm d\omega
		=
		\iiint_{[0,\infty)^3}
		\mathrm d\omega\,\mathrm d\omega_1\,\mathrm d\omega_2\,
		\delta(\omega-\omega_1-\omega_2)
		\notag\\
		&\qquad\qquad \times
		U(\omega_1)U(\omega_2)U(\omega)\,
		\big(f_1f_2\chi_{\{\omega_1,\omega_2\le N\}}-ff_1\chi_{\{\omega_1,\omega\le N\}}-ff_2\chi_{\{\omega,\omega_2\le N\}}\big)\,
		(\omega-\omega_1-\omega_2)
		\label{Lemma:Energy:E1}\\
		&=0,
		\notag
	\end{align}
	which implies \eqref{Lemma:Energy:1}.
\end{proof}

In addition, we also have the following entropy estimate for the cut-off equation \eqref{Cutoff2-new}.

\begin{lemma}
	\label{Lemma:EntropyTestCutoff}
	Let \(f_N\) be a non-negative  solution to \eqref{Cutoff2-new}, and let
\(e:[0,\infty)\to[0,\infty)\) be a function satisfying
\[
e\in C^1([0,\infty)), \qquad e \text{ is convex}, \qquad e(0)=0, \qquad e\ge 0.
\]
Assume that \(U(x)=U(|x|)\ge 0\) and
\[
	U(x+y)\ge U(y) + U(x-y),
	\qquad
	U(x)\ge U(y),
	\qquad
	\text{for all } (x,y)\in[0,\infty)^2 \text{ with } x\ge y.
\]

Moreover, we assume that \(U\) only vanishes on a set of zero measure. 

Then we have the following inequalities:
\begin{equation}\label{Lemma:EntropyTest:1Cutoff}
\frac{\mathrm d}{\mathrm dt}\,\mathcal{E}_e[f_N\chi_{\{\omega\le N\}}]
=
\int_{[0,\infty)} \mathrm d\omega\,
\partial_t\!\left(
\frac{e[Uf_N\chi_{\{\omega\le N\}}](\omega)}{U(\omega)}
\right)
\le 0,
\end{equation}
and 	\begin{equation}\label{eq:dissipation-assumption:1}
	2	\int_0^t \mathrm{d}s
		\int_0^\infty \mathrm{d}\omega
		\int_0^\omega \mathrm{d}\omega_1\,
		U(\omega_1)U(\omega-\omega_1)\,f_N(s,\omega)\chi_{\{\omega\le N\}}\,f_N(s,\omega_1)\chi_{\{\omega_1\le N\}}\,
		e'[Uf_N\chi_{\{\omega\le N\}}](\omega)
		\le
		\mathcal{E}_e[f_0].
	\end{equation}
\end{lemma}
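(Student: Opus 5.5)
Let \(h_N:=f_N\chi_{\{\omega\le N\}}\). The plan is to reduce this statement to the computation already done in Lemma~\ref{Lemma:EntropyTest}, applied to \(h_N\). The key observation is that every term of \(\mathcal Q_N[f_N]\) carries cutoffs that place each of its factors of \(f_N\) in \([0,N]\). Indeed, in each product \(f_if_j\chi_{\{\omega_i,\omega_j\le N\}}\), both densities are evaluated at frequencies \(\le N\). Consequently
\[
\mathcal Q_N[f_N](\omega)=\widetilde{\mathcal Q}[h_N](\omega),
\]
where \(\widetilde{\mathcal Q}\) is the operator \eqref{3waveOp} in which each of the three bilinear terms is evaluated only at frequencies \(\le N\), i.e.\ exactly \eqref{3waveOp} written with \(h_N\). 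One has to check that the cutoffs in \eqref{Cutoff1} coincide with replacing \(f\) by \(h_N\) in \eqref{3waveOp}; this holds term by term, since \(f_if_j\chi_{\{\omega_i,\omega_j\le N\}}=h_N(\omega_i)h_N(\omega_j)\). Hence \(\mathcal Q_N[f_N]=\mathcal Q[h_N]\) identically.

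For \(\omega\le N\) we then have \(\partial_t h_N=\mathcal Q[h_N]\chi_{\{\omega\le N\}}\). Therefore
\[
\frac{\mathrm d}{\mathrm dt}\mathcal E_e[h_N]=\int_0^N \mathrm d\omega\,\mathcal Q[h_N](\omega)\,e'[Uh_N](\omega).
\]
Since \(e'(0)\) need not vanish, I would write this as \(\int_0^\infty \mathcal Q[h_N]\,e'[Uh_N]\) minus \(e'(0)\int_N^\infty \mathcal Q[h_N]\,\mathrm d\omega\). On \(\omega>N\) the factors \(h_N(\omega)\) vanish, so only the gain parts survive. These are \(h_N(\omega_1)h_N(\omega_2)\) with \(\omega_1+\omega_2=\omega\) in the first integral, and \(+2h_N(\omega_1)h_N(\omega_2)\) terms, which also vanish for \(\omega_2>\omega>N\). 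Hence \(\int_N^\infty\mathcal Q[h_N]\ge0\). Since convexity together with \(e\ge0\) and \(e(0)=0\) forces \(e'(0)\ge0\), the subtracted term only helps, and the inequality is preserved. (Alternatively one simply observes \(e'(0)=0\) in the setting of Theorem~\ref{thm}.)

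Next, I apply the chain of estimates in the proof of Lemma~\ref{Lemma:EntropyTest} verbatim to \(\int_0^\infty\mathcal Q[h_N]e'[Uh_N]\) with \(f\) replaced by the fixed nonnegative function \(h_N(t,\cdot)\). That proof is purely a pointwise-in-time algebraic estimate: it uses only nonnegativity, the convexity inequality \((a-b)e'(b)\le e(a)-e(b)\), symmetrization and changes of variables, and conditions \eqref{UCondition}. It never uses the equation itself, so the evolution of \(h_N\) plays no role. This yields the analogue of \eqref{Lemma:EntropyTest:E8a}:
\[
\frac{\mathrm d}{\mathrm dt}\mathcal E_e[h_N]\le -A_3[h_N]\le 0,
\]
which is \eqref{Lemma:EntropyTest:1Cutoff}. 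Integrating in time from \(0\) to \(t\), using \(\mathcal E_e[h_N(t)]\ge0\) and \(\mathcal E_e[h_N(0)]=\mathcal E_e[f_{N,0}]\le\mathcal E_e[f_0]\) (since \(e\ge0\) and \(e(0)=0\)), and writing out \(A_3[h_N]\) gives \eqref{eq:dissipation-assumption:1}.

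The main obstacle is justifying the manipulations: finiteness of all integrals, so that splitting and the changes of variables are legitimate, and differentiability of \(\mathcal E_e[h_N]\) in time. I would handle this using the regularity of the cutoff solution. Since \(f_N\) is supported essentially in \([0,N]\)-weighted quantities, the integrands are bounded by \(\|h_N\|\)-type norms times bounded \(U\) on \([0,N]\), and the test function \(e'[Uh_N]\) is bounded provided \(h_N\in L^\infty\) (assumed or proved for the well-posed cutoff problem). If only \(L^1\) is available, I would first run the argument with a truncated convex entropy \(e_M\) (affine beyond \(M\)), which has bounded \(e_M'\), and then let \(M\to\infty\) by monotone convergence.
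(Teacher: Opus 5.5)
Your argument is correct, but it is organized differently from the paper's. The paper does not reduce to Lemma~\ref{Lemma:EntropyTest}. It tests against $e'(Uf_N\chi_{\{\omega\le N\}})\chi_{\{\omega\le N\}}$ and re-runs the whole chain ($A_1$, $A_2$, $A_3$, convexity, changes of variables) with every cutoff indicator carried along. It removes the constraints as it goes by enlarging the domains of nonnegative terms (e.g.\ $\chi_{\{\omega+\omega'\le N\}}\le\chi_{\{\omega'\le N\}}$ after the change of variables in $A_1$), and it invokes the symmetry of $\{\omega+\omega_1\le N\}$ for the cancellation in $A_2$.

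Your identity $\mathcal Q_N[f]=\mathcal Q[f\chi_{\{\omega\le N\}}]$, which is exact term by term, makes that bookkeeping unnecessary. The cutoff problem is just the original operator acting on $h_N$, and the only new feature is that the test function lives on $[0,N]$. The discarded piece $e'(0)\int_N^{2N}\mathcal Q[h_N]\,\mathrm d\omega$ is nonnegative, because on $(N,2N]$ only the gain term survives and $e'(0)\ge 0$.

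What your route buys:
\begin{itemize}
\item It is shorter and modular.
\item It shows structurally why truncation preserves the entropy: the only effect of the cutoff is a nonnegative outgoing gain flux past $N$.
\item It supplies the time integration giving \eqref{eq:dissipation-assumption:1}, using $\mathcal E_e[h_N(t)]\ge 0$ and $\mathcal E_e[f_{N,0}]\le\mathcal E_e[f_0]$, which the paper calls straightforward.
\item It gives a reasonable way, via the truncated entropies $e_M$, to justify the formal manipulations; the paper never addresses this.
\end{itemize}
The paper's version is self-contained, but longer and does not isolate this structure.

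One small addition to your list of what the proof of Lemma~\ref{Lemma:EntropyTest} uses: it also needs $e'\ge 0$. This is used to bound the loss term through $U(\omega+\omega_1)\ge U(\omega)$, and to conclude $A_2\le 0$ and $A_3\ge 0$. It follows from your observation $e'(0)\ge 0$ together with convexity, so nothing is missing.
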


\begin{proof}
Using
\[
e'\!\left(Uf_N\chi_{\{\omega\le N\}}\right)\chi_{\{\omega\le N\}}
\]
as a test function, we obtain
\begin{equation}\label{Lemma:EntropyTest:E1t}
	\begin{aligned}
		&\int_{[0,\infty)} \mathrm d\omega\,
		\mathcal Q_N[f_N](\omega)\,
		e'\!\left[Uf_N\chi_{\{\omega\le N\}}\right](\omega)\,
		\chi_{\{\omega\le N\}} \\
		={}&
		\int_{[0,\infty)} \mathrm d\omega
		\int_{[0,\omega]} \mathrm d\omega_1\,
		U(\omega_1)U(\omega-\omega_1)
		\,\chi_{\{\omega\le N\}}
		\Big(
		f_N(\omega_1)f_N(\omega-\omega_1)\chi_{\{\omega_1,\omega-\omega_1\le N\}} \\
		&\qquad\qquad
		- f_N(\omega)f_N(\omega_1)\chi_{\{\omega,\omega_1\le N\}}
		- f_N(\omega)f_N(\omega-\omega_1)\chi_{\{\omega,\omega-\omega_1\le N\}}
		\Big)
		e'\!\left[Uf_N\chi_{\{\omega\le N\}}\right](\omega) \\
		&\quad
		-2\int_{[0,\infty)} \mathrm d\omega
		\int_{[0,\infty)} \mathrm d\omega_1\,
		U(\omega_1)U(\omega_1+\omega)\,\chi_{\{\omega\le N\}}
		\Big(
		f_N(\omega)f_N(\omega_1)\chi_{\{\omega,\omega_1\le N\}} \\
		&\qquad\qquad
		- f_N(\omega_1)f_N(\omega+\omega_1)\chi_{\{\omega+\omega_1,\omega_1\le N\}}
		- f_N(\omega)f_N(\omega+\omega_1)\chi_{\{\omega+\omega_1,\omega\le N\}}
		\Big)
		e'\!\left[Uf_N\chi_{\{\omega\le N\}}\right](\omega).
	\end{aligned}
\end{equation}
	
We then define
\begin{equation}\label{Lemma:EntropyTest:E1at}
	\begin{aligned}
		A_1 := {}&
		\int_{[0,\infty)} \mathrm d\omega
		\int_{[0,\omega]} \mathrm d\omega_1\,
		U(\omega_1)U(\omega-\omega_1)\,\chi_{\{\omega\le N\}}
		\Big(
		f_N(\omega_1) f_N(\omega-\omega_1)\chi_{\{\omega_1,\omega-\omega_1\le N\}} \\
		&\qquad
		- f_N(\omega) f_N(\omega_1)\chi_{\{\omega,\omega_1\le N\}}
		- f_N(\omega) f_N(\omega-\omega_1)\chi_{\{\omega,\omega-\omega_1\le N\}}
		\Big)
		\,e'\!\left[Uf_N\chi_{\{\omega\le N\}}\right](\omega) \\
		&\;-\;
		2\int_{[0,\infty)} \mathrm d\omega
		\int_{[0,\infty)} \mathrm d\omega_1\,
		U(\omega_1)U(\omega_1+\omega)\,
		f_N(\omega) f_N(\omega_1)\,
		\mathbf{1}_{\{\omega_1\le \omega\}}
		\chi_{\{\omega,\omega_1\le N\}}
		\,e'\!\left[Uf_N\chi_{\{\omega\le N\}}\right](\omega)\chi_{\{\omega\le N\}}, \\[0.5em]
		A_2 := {}&
		-2\int_{[0,\infty)} \mathrm d\omega
		\int_{[0,\infty)} \mathrm d\omega_1\,
		U(\omega_1)U(\omega_1+\omega)\,\chi_{\{\omega\le N\}}
		\Big(
		f_N(\omega) f_N(\omega_1)\mathbf{1}_{\{\omega_1\ge \omega\}}\chi_{\{\omega,\omega_1\le N\}} \\
		&\qquad\qquad
		- f_N(\omega_1) f_N(\omega+\omega_1)\chi_{\{\omega+\omega_1,\omega_1\le N\}}
		- f_N(\omega) f_N(\omega+\omega_1)\chi_{\{\omega,\omega+\omega_1\le N\}}
		\Big)
		\,e'\!\left[Uf_N\chi_{\{\omega\le N\}}\right](\omega).
	\end{aligned}
\end{equation}
	
We first estimate \(A_1\). On the support of the first integral in \(A_1\), we have
\(0\le \omega_1\le \omega\) and \(\omega\le N\). Hence
\(\omega_1\le N\) and \(\omega-\omega_1\le N\). By the same symmetry argument as in
\eqref{Lemma:EntropyTest:E3}, we obtain
\begin{equation*}
	\begin{aligned}
		A_1 \le {}&
		2\int_{[0,\infty)} \mathrm d\omega
		\int_{[0,\omega]} \mathrm d\omega_1\,
		U(\omega-\omega_1)f_N(\omega-\omega_1) \\
		&\qquad\qquad\times
		\Big(
		U(\omega_1)f_N(\omega_1)\chi_{\{\omega_1\le N\}}
		-
		U(\omega)f_N(\omega)\chi_{\{\omega\le N\}}
		\Big) \\
		&\qquad\qquad\times
		e'\!\left[Uf_N\chi_{\{\omega\le N\}}\right](\omega)\,
		\mathbf{1}_{\{\omega_1\ge \omega-\omega_1\}}\chi_{\{\omega\le N\}} \\
		&+
		2\int_{[0,\infty)} \mathrm d\omega
		\int_{[0,\omega]} \mathrm d\omega_1\,
		U(\omega-\omega_1)f_N(\omega-\omega_1)\,
		U(\omega)f_N(\omega)\chi_{\{\omega\le N\}} \\
		&\qquad\qquad\times
		e'\!\left[Uf_N\chi_{\{\omega\le N\}}\right](\omega)\,
		\mathbf{1}_{\{\omega_1\ge \omega-\omega_1\}}\\
		&-
		2\int_{[0,\infty)} \mathrm d\omega
		\int_{[0,\omega]} \mathrm d\omega_1\,
		U(\omega)f_N(\omega)\chi_{\{\omega\le N\}}\,
		U(\omega_1)f_N(\omega_1)\chi_{\{\omega_1\le N\}} \\
		&\qquad\qquad\times
		e'\!\left[Uf_N\chi_{\{\omega\le N\}}\right](\omega)
		\ - \ A_3,
	\end{aligned}
\end{equation*}
where
\begin{equation*}
	\begin{aligned}
		A_3:={}&
		2\int_{[0,\infty)} \mathrm d\omega
		\int_{[0,\omega]} \mathrm d\omega_1\,
		U(\omega_1)U(\omega-\omega_1)
		f_N(\omega)f_N(\omega_1)
		\chi_{\{\omega,\omega_1\le N\}}
		e'\!\left[Uf_N\chi_{\{\omega\le N\}}\right](\omega).
	\end{aligned}
\end{equation*}
Now we use explicitly the cutoff version of the convexity inequality. Since
\(e\) is convex, for
\[
a=
U(\omega_1)f_N(\omega_1)\chi_{\{\omega_1\le N\}},
\qquad
b=
U(\omega)f_N(\omega)\chi_{\{\omega\le N\}},
\]
we have
\[
(a-b)e'(b)\le e(a)-e(b).
\]
Therefore,
\begin{equation}\label{Lemma:EntropyTest:CutoffConvexityA1}
	\begin{aligned}
	&\Big(
	U(\omega_1)f_N(\omega_1)\chi_{\{\omega_1\le N\}}
	-
	U(\omega)f_N(\omega)\chi_{\{\omega\le N\}}
	\Big)
	e'\!\left[Uf_N\chi_{\{\omega\le N\}}\right](\omega) \\
	&\qquad\le
	e\!\left[Uf_N\chi_{\{\omega\le N\}}\right](\omega_1)
	-
	e\!\left[Uf_N\chi_{\{\omega\le N\}}\right](\omega).
	\end{aligned}
\end{equation}
Combining this inequality with the previous estimate for \(A_1\), we obtain
\begin{equation*}
	\begin{aligned}
		A_1 \le {}&
		2\int_{[0,\infty)} \mathrm d\omega
		\int_{[0,\omega]} \mathrm d\omega_1\,
		U(\omega-\omega_1)f_N(\omega-\omega_1)
		e\!\left[Uf_N\chi_{\{\omega\le N\}}\right](\omega_1) \\
		&\qquad\qquad\times
		\mathbf{1}_{\{\omega_1\ge \omega-\omega_1\}}\chi_{\{\omega\le N\}} \\
		&-
		2\int_{[0,\infty)} \mathrm d\omega
		\int_{[0,\omega]} \mathrm d\omega_1\,
		U(\omega)f_N(\omega)\chi_{\{\omega\le N\}}\,
		U(\omega_1)f_N(\omega_1)\chi_{\{\omega_1\le N\}} \\
		&\qquad\qquad\times
		e'\!\left[Uf_N\chi_{\{\omega\le N\}}\right](\omega) \\
		&+
		2\int_{[0,\infty)} \mathrm d\omega
		\int_{[0,\omega]} \mathrm d\omega_1\,
		U(\omega-\omega_1)f_N(\omega-\omega_1) \\
		&\qquad\qquad\times
		\Big(
		U(\omega)f_N(\omega)\chi_{\{\omega\le N\}}\,
		e'\!\left[Uf_N\chi_{\{\omega\le N\}}\right](\omega)
		-
		e\!\left[Uf_N\chi_{\{\omega\le N\}}\right](\omega)
		\Big) \\
		&\qquad\qquad\times
		\mathbf{1}_{\{\omega_1\ge \omega-\omega_1\}}\chi_{\{\omega\le N\}}
		\ - \ A_3.
	\end{aligned}
\end{equation*}
Performing the same changes of variables as in \eqref{Lemma:EntropyTest:E3}, namely
\[
(\omega-\omega_1,\omega_1)\mapsto(\omega',\omega),
\qquad
(\omega,\omega_1)\mapsto(\omega,\omega'),
\qquad
(\omega-\omega_1,\omega)\mapsto(\omega',\omega),
\]
respectively, and using the nonnegativity of the corresponding integrands together
with the fact that
\[
\chi_{\{\omega+\omega'\le N\}}
\le
\chi_{\{\omega'\le N\}},
\]
we obtain
\begin{equation}\label{Lemma:EntropyTest:E3t}
	\begin{aligned}
		A_1 \le {}&
		2\int_{[0,\infty)} \mathrm d\omega
		\int_{[0,\omega]} \mathrm d\omega'\,
		U(\omega')f_N(\omega')\chi_{\{\omega'\le N\}}\,
		e\!\left[Uf_N\chi_{\{\omega\le N\}}\right](\omega) \\
		&-
		2\int_{[0,\infty)} \mathrm d\omega
		\int_{[0,\omega]} \mathrm d\omega'\,
		U(\omega)f_N(\omega)\chi_{\{\omega\le N\}}\,
		U(\omega')f_N(\omega')\chi_{\{\omega'\le N\}} \\
		&\qquad\qquad\times
		e'\!\left[Uf_N\chi_{\{\omega\le N\}}\right](\omega) \\
		&+
		2\int_{[0,\infty)} \mathrm d\omega
		\int_{[0,\omega]} \mathrm d\omega'\,
		U(\omega')f_N(\omega')\chi_{\{\omega'\le N\}} \\
		&\qquad\qquad\times
		\Big(
		U(\omega)f_N(\omega)\chi_{\{\omega\le N\}}\,
		e'\!\left[Uf_N\chi_{\{\omega\le N\}}\right](\omega)
		-
		e\!\left[Uf_N\chi_{\{\omega\le N\}}\right](\omega)
		\Big) \\
		&\qquad\qquad\times
		\mathbf{1}_{\{\omega'\le \omega-\omega'\}}\chi_{\{\omega\le N\}}
		\ - \ A_3 \\
		\le {}&
		-2\int_{[0,\infty)} \mathrm d\omega
		\int_{[0,\omega]} \mathrm d\omega'\,
		U(\omega')f_N(\omega')\chi_{\{\omega'\le N\}}\chi_{\{\omega\le N\}} \\
		&\qquad\qquad\times
		\Big(
		U(\omega)f_N(\omega)\chi_{\{\omega\le N\}}\,
		e'\!\left[Uf_N\chi_{\{\omega\le N\}}\right](\omega)
		-
		e\!\left[Uf_N\chi_{\{\omega\le N\}}\right](\omega)
		\Big) \\
		&\qquad\qquad\times
		\mathbf{1}_{\{\omega'\ge \omega-\omega'\}}
		\ - \ A_3.
	\end{aligned}
\end{equation}	
We now estimate \(A_2\). We first consider the second term in \(A_2\).
Since the factor \(\chi_{\{\omega+\omega_1,\omega_1\le N\}}\) implies
\(\omega\le N\), we do not write the cutoff \(\chi_{\{\omega\le N\}}\) in this
term. We write
\begin{equation}\label{Lemma:EntropyTest:E4t}
	\begin{aligned}
		&\;
		2\int_{[0,\infty)} \mathrm d\omega
		\int_{[0,\infty)} \mathrm d\omega_1\,
		U(\omega_1)U(\omega_1+\omega)
		f_N(\omega_1)f_N(\omega+\omega_1)
		\chi_{\{\omega+\omega_1,\omega_1\le N\}} \\
		&\qquad\qquad\times
		e'\!\left[Uf_N\chi_{\{\omega\le N\}}\right](\omega) \\
		={}&
		2\int_{[0,\infty)} \mathrm d\omega
		\int_{[0,\infty)} \mathrm d\omega_1\,
		U(\omega_1+\omega)f_N(\omega+\omega_1)
		\chi_{\{\omega+\omega_1,\omega_1\le N\}} \\
		&\qquad\qquad\times
		\Big[
		U(\omega_1)f_N(\omega_1)\chi_{\{\omega_1\le N\}}
		-
		U(\omega)f_N(\omega)\chi_{\{\omega\le N\}}
		\Big]
		e'\!\left[Uf_N\chi_{\{\omega\le N\}}\right](\omega) \\
		&+
		2\int_{[0,\infty)} \mathrm d\omega
		\int_{[0,\infty)} \mathrm d\omega_1\,
		U(\omega_1+\omega)f_N(\omega+\omega_1)
		U(\omega)f_N(\omega)\chi_{\{\omega\le N\}} \\
		&\qquad\qquad\times
		\chi_{\{\omega+\omega_1,\omega_1\le N\}}
		e'\!\left[Uf_N\chi_{\{\omega\le N\}}\right](\omega).
	\end{aligned}
\end{equation}
By the cutoff convexity inequality, we have
\begin{equation}\label{Lemma:EntropyTest:CutoffConvexityA2}
	\begin{aligned}
	&\Big[
	U(\omega_1)f_N(\omega_1)\chi_{\{\omega_1\le N\}}
	-
	U(\omega)f_N(\omega)\chi_{\{\omega\le N\}}
	\Big]
	e'\!\left[Uf_N\chi_{\{\omega\le N\}}\right](\omega) \\
	&\qquad\le
	e\!\left[Uf_N\chi_{\{\omega\le N\}}\right](\omega_1)
	-
	e\!\left[Uf_N\chi_{\{\omega\le N\}}\right](\omega).
	\end{aligned}
\end{equation}
Thus, we can bound
\begin{equation}\label{Lemma:EntropyTest:E5t}
	\begin{aligned}
		&\;
		2\int_{[0,\infty)} \mathrm d\omega
		\int_{[0,\infty)} \mathrm d\omega_1\,
		U(\omega_1)U(\omega_1+\omega)
		f_N(\omega_1)f_N(\omega+\omega_1)
		\chi_{\{\omega+\omega_1,\omega_1\le N\}} \\
		&\qquad\qquad\times
		e'\!\left[Uf_N\chi_{\{\omega\le N\}}\right](\omega) \\
		\le{}&
		2\int_{[0,\infty)} \mathrm d\omega
		\int_{[0,\infty)} \mathrm d\omega_1\,
		U(\omega_1+\omega)f_N(\omega+\omega_1)
		\chi_{\{\omega+\omega_1,\omega_1\le N\}} \\
		&\qquad\qquad\times
		\Big(
		e\!\left[Uf_N\chi_{\{\omega\le N\}}\right](\omega_1)
		-
		e\!\left[Uf_N\chi_{\{\omega\le N\}}\right](\omega)
		\Big) \\
		&+
		2\int_{[0,\infty)} \mathrm d\omega
		\int_{[0,\infty)} \mathrm d\omega_1\,
		U(\omega_1+\omega)f_N(\omega+\omega_1)
		U(\omega)f_N(\omega)\chi_{\{\omega\le N\}} \\
		&\qquad\qquad\times
		\chi_{\{\omega+\omega_1,\omega_1\le N\}}
		e'\!\left[Uf_N\chi_{\{\omega\le N\}}\right](\omega).
	\end{aligned}
\end{equation}
Since the domain \(\{\omega+\omega_1\le N\}\) is symmetric in \(\omega\) and
\(\omega_1\), and since \(\chi_{\{\omega+\omega_1,\omega_1\le N\}}
=\chi_{\{\omega+\omega_1\le N\}}\), we have
\begin{equation*}
	\begin{aligned}
	&2\int_{[0,\infty)} \mathrm d\omega
	\int_{[0,\infty)} \mathrm d\omega_1\,
	U(\omega_1+\omega)f_N(\omega+\omega_1)
	\chi_{\{\omega+\omega_1,\omega_1\le N\}}
	e\!\left[Uf_N\chi_{\{\omega\le N\}}\right](\omega_1) \\
	={}&
	2\int_{[0,\infty)} \mathrm d\omega
	\int_{[0,\infty)} \mathrm d\omega_1\,
	U(\omega_1+\omega)f_N(\omega+\omega_1)
	\chi_{\{\omega+\omega_1,\omega\le N\}}
	e\!\left[Uf_N\chi_{\{\omega\le N\}}\right](\omega).
	\end{aligned}
\end{equation*}
Therefore the entropy terms in \eqref{Lemma:EntropyTest:E5t} cancel, and we get
\begin{equation}\label{Lemma:EntropyTest:E6t}
	\begin{aligned}
		&\;
		2\int_{[0,\infty)} \mathrm d\omega
		\int_{[0,\infty)} \mathrm d\omega_1\,
		U(\omega_1)U(\omega_1+\omega)
		f_N(\omega_1)f_N(\omega+\omega_1)
		\chi_{\{\omega+\omega_1,\omega_1\le N\}} \\
		&\qquad\qquad\times
		e'\!\left[Uf_N\chi_{\{\omega\le N\}}\right](\omega) \\
		\le{}&
		2\int_{[0,\infty)} \mathrm d\omega
		\int_{[0,\infty)} \mathrm d\omega_1\,
		U(\omega_1+\omega)f_N(\omega+\omega_1)
		U(\omega)f_N(\omega)\chi_{\{\omega\le N\}} \\
		&\qquad\qquad\times
		\chi_{\{\omega+\omega_1,\omega_1\le N\}}
		e'\!\left[Uf_N\chi_{\{\omega\le N\}}\right](\omega).
	\end{aligned}
\end{equation}

We next plug \eqref{Lemma:EntropyTest:E6t} into the expression for \(A_2\).
The third term in \(A_2\) is transformed by the change of variables
\[
(\omega+\omega_1,\omega)\mapsto(\omega_1,\omega),
\]
which gives
\[
\begin{aligned}
&2\int_{[0,\infty)} \mathrm d\omega
\int_{[0,\infty)} \mathrm d\omega_1\,
U(\omega_1)U(\omega_1+\omega)
f_N(\omega)f_N(\omega+\omega_1)
\chi_{\{\omega,\omega+\omega_1\le N\}}
e'\!\left[Uf_N\chi_{\{\omega\le N\}}\right](\omega) \\
={}&
2\int_{[0,\infty)} \mathrm d\omega
\int_{[\omega,\infty)} \mathrm d\omega_1\,
U(\omega_1-\omega)U(\omega_1)
f_N(\omega)f_N(\omega_1)
\chi_{\{\omega,\omega_1\le N\}}
e'\!\left[Uf_N\chi_{\{\omega\le N\}}\right](\omega).
\end{aligned}
\]
Similarly, the upper bound in \eqref{Lemma:EntropyTest:E6t} becomes
\[
\begin{aligned}
&2\int_{[0,\infty)} \mathrm d\omega
\int_{[0,\infty)} \mathrm d\omega_1\,
U(\omega_1+\omega)f_N(\omega+\omega_1)
U(\omega)f_N(\omega)\chi_{\{\omega\le N\}}
\chi_{\{\omega+\omega_1,\omega_1\le N\}}
e'\!\left[Uf_N\chi_{\{\omega\le N\}}\right](\omega) \\
={}&
2\int_{[0,\infty)} \mathrm d\omega
\int_{[\omega,\infty)} \mathrm d\omega_1\,
U(\omega_1)U(\omega)
f_N(\omega_1)f_N(\omega)
\chi_{\{\omega,\omega_1\le N\}}
e'\!\left[Uf_N\chi_{\{\omega\le N\}}\right](\omega).
\end{aligned}
\]
Hence, we can estimate
\begin{equation}\label{Lemma:EntropyTest:E7t}
	\begin{aligned}
		A_2
		\le {}&
		-2\int_{[0,\infty)} \mathrm d\omega
		\int_{[\omega,\infty)} \mathrm d\omega_1\,
		U(\omega_1)U(\omega_1+\omega)
		f_N(\omega)f_N(\omega_1)
		\chi_{\{\omega,\omega_1\le N\}}
		e'\!\left[Uf_N\chi_{\{\omega\le N\}}\right](\omega) \\
		&+
		2\int_{[0,\infty)} \mathrm d\omega
		\int_{[\omega,\infty)} \mathrm d\omega_1\,
		U(\omega_1)U(\omega)
		f_N(\omega)f_N(\omega_1)
		\chi_{\{\omega,\omega_1\le N\}}
		e'\!\left[Uf_N\chi_{\{\omega\le N\}}\right](\omega) \\
		&+
		2\int_{[0,\infty)} \mathrm d\omega
		\int_{[\omega,\infty)} \mathrm d\omega_1\,
		U(\omega_1-\omega)U(\omega_1)
		f_N(\omega)f_N(\omega_1)
		\chi_{\{\omega,\omega_1\le N\}}
		e'\!\left[Uf_N\chi_{\{\omega\le N\}}\right](\omega) \\
		={}&
		-2\int_{[0,\infty)} \mathrm d\omega
		\int_{[\omega,\infty)} \mathrm d\omega_1\,
		U(\omega_1)f_N(\omega)f_N(\omega_1)
		\chi_{\{\omega,\omega_1\le N\}} \\
		&\qquad\qquad\times
		\Big(
		U(\omega_1+\omega)
		-
		U(\omega)
		-
		U(\omega_1-\omega)
		\Big)
		e'\!\left[Uf_N\chi_{\{\omega\le N\}}\right](\omega).
	\end{aligned}
\end{equation}

Since \(U(\omega_1+\omega)\ge U(\omega_1-\omega)+U(\omega)\), it follows from
\eqref{Lemma:EntropyTest:E7t} that \(A_2\le 0\). Combining this with
\eqref{Lemma:EntropyTest:E3t}, we obtain
\begin{equation}\label{Lemma:EntropyTest:E8t}
	\begin{aligned}
		&\int_{[0,\infty)} \mathrm d\omega\,
		\mathcal Q_N[f_N](\omega)\,
		e'\!\left[Uf_N\chi_{\{\omega\le N\}}\right](\omega)
		\chi_{\{\omega\le N\}} \\
		\le {}&
		-2\int_{[0,\infty)} \mathrm d\omega
		\int_{[0,\omega]} \mathrm d\omega'\,
		U(\omega')f_N(\omega')\chi_{\{\omega'\le N\}}\chi_{\{\omega\le N\}} \\
		&\qquad\qquad\times
		\Big(
		U(\omega)f_N(\omega)\chi_{\{\omega\le N\}}\,
		e'\!\left[Uf_N\chi_{\{\omega\le N\}}\right](\omega)
		-
		e\!\left[Uf_N\chi_{\{\omega\le N\}}\right](\omega)
		\Big) \\
		&\qquad\qquad\times
		\mathbf{1}_{\{\omega'\ge \omega-\omega'\}}
		\ - \ A_3.
	\end{aligned}
\end{equation}
The conclusion of the lemma now follows from the inequality
\[
U(\omega)f_N(\omega)\chi_{\{\omega\le N\}}\,
e'\!\left[Uf_N\chi_{\{\omega\le N\}}\right](\omega)
-
e\!\left[Uf_N\chi_{\{\omega\le N\}}\right](\omega)
\ge 0,
\]
which follows from the convexity of \(e\) and \(e(0)=0\). 

Moreover, we have
\[
\int_{[0,\infty)} \mathrm d\omega\,
\partial_t\!\left(
\frac{e\!\left[Uf_N\chi_{\{\omega\le N\}}\right](\omega)}{U(\omega)}
\right)
=
\int_{[0,\infty)} \mathrm d\omega\,
\mathcal Q_N[f_N](\omega)\,
e'\!\left[Uf_N\chi_{\{\omega\le N\}}\right](\omega)\chi_{\{\omega\le N\}}.
\]
Therefore,
\[
\frac{\mathrm d}{\mathrm dt}\,
\mathcal E_e[f_N\chi_{\{\omega\le N\}}]\le0.
\]
Inequality \eqref{eq:dissipation-assumption:1} follows by a straightforward argument.
\end{proof}

\begin{lemma}\label{Lemma:QN:Lipschitz:weighted}
Assume that
\[
U(\omega)\ge 0 \quad \text{for all } \omega\ge0,
\qquad
\mathfrak M_N:=\sup_{\omega\in[0,2N]}(1+U(\omega))<\infty.
\]
Then the cutoff operator \(\mathcal Q_N\) is locally Lipschitz on
\(L^1_{1+U}([0,\infty))\), where
\[
\|f\|_{L^1_{1+U}}
:=
\|(1+U)f\|_{L^1([0,\infty))}
=
\int_{[0,\infty)}(1+U(\omega))|f(\omega)|\,\mathrm d\omega .
\]
More precisely, for every \(f,g\in L^1_{1+U}([0,\infty))\),
\[
\|(1+U)(\mathcal Q_N[f]-\mathcal Q_N[g])\|_{L^1([0,\infty))}
\le
C_N
\left(
\|(1+U)f\|_{L^1([0,\infty))}
+
\|(1+U)g\|_{L^1([0,\infty))}
\right)
\|(1+U)(f-g)\|_{L^1([0,\infty))},
\]
where one may take
\[
C_N=9\mathfrak M_N^3.
\]
Consequently, for every \(R>0\), if
\[
\|f\|_{L^1_{1+U}},\ \|g\|_{L^1_{1+U}}\le R,
\]
then
\[
\|\mathcal Q_N[f]-\mathcal Q_N[g]\|_{L^1_{1+U}}
\le
18R\mathfrak M_N^3
\|f-g\|_{L^1_{1+U}}.
\]
\end{lemma}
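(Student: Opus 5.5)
We prove the Lipschitz bound by expanding the cutoff operator \(\mathcal Q_N\) from \eqref{Cutoff1} into six bilinear pieces. Writing \(\mathcal Q_N[f]=\sum_{k=1}^6 B_k(f,f)\), each \(B_k\) is one of the quadratic monomials \(f_1f_2\chi\), \(ff_1\chi\), \(ff_2\chi\) in the two integrals, with the weights \(U(\omega_1)U(\omega_2)\) and the delta constraint. We then use the telescoping identity
\[
B_k(f,f)-B_k(g,g)=B_k(f-g,f)+B_k(g,f-g),
\]
so it suffices to prove, for each \(k\) and all \(h_1,h_2\),
\[
\|(1+U)B_k(h_1,h_2)\|_{L^1}\le c\,\mathfrak M_N^3\,\|(1+U)h_1\|_{L^1}\|(1+U)h_2\|_{L^1},
\]
with a constant \(c\) such that the sum of the six contributions, applied to the two telescoped terms, gives at most \(9\mathfrak M_N^3\). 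Absolute values are put inside all integrals, and the sign structure is discarded.

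For each piece, we integrate \((1+U(\omega))|B_k(h_1,h_2)(\omega)|\) over \(\omega\) and use Fubini to eliminate the delta function. The key observation is that every cutoff \(\chi\) forces the two arguments carrying \(h\) to lie in \([0,N]\), and the delta relation then puts the remaining free variable in \([0,2N]\).

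\emph{First integral.} Here \(\omega=\omega_1+\omega_2\le 2N\) whenever \(\omega_1,\omega_2\le N\). For the term with \(\omega,\omega_1\le N\) we have \(\omega_2\le\omega\le N\). Hence each factor of \(U\) or \((1+U)\) evaluated at the third (unweighted) variable is bounded by \(\mathfrak M_N\).

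\emph{Second integral.} Here \(\omega_2=\omega+\omega_1\). In the terms \(f_1f\chi_{\{\omega,\omega_1\le N\}}\) we get \(\omega_2\le 2N\). In the terms involving \(f_2\) with cutoff \(\omega_2\le N\), we get \(\omega,\omega_1\le N\).

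In every case, after the change of variables, the integrand has the form \(|h_1(a)||h_2(b)|\) times a product of at most three factors of type \(U(\cdot)\) or \((1+U(\cdot))\), all evaluated at points of \([0,2N]\). Bounding one factor for each \(h\)-variable by \((1+U)\) absorbs it into the weighted \(L^1\) norm of that \(h\). The remaining factor, together with the bounds \(U\le 1+U\le\mathfrak M_N\) at the other points, produces at most \(\mathfrak M_N^3\). The factor \(2\) in front of the second integral, together with the counting of the six terms and the two telescoped pieces, fixes the explicit constant.

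The consequence for \(\|f\|,\|g\|\le R\) then follows immediately from \(C_N(\|f\|+\|g\|)\le 9\mathfrak M_N^3\cdot 2R\).

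The main obstacle is bookkeeping rather than analysis. For each of the six terms one must check that the cutoff indeed confines all three frequencies to \([0,2N]\). The point requiring care is the term \(ff_2\chi_{\{\omega,\omega_2\le N\}}\) in the first integral, where \(\omega_2\le\omega\le N\) holds automatically. One must also track the constants carefully enough to land on exactly \(9\mathfrak M_N^3\). If the count comes out slightly differently, we would simply bound crudely, using \(\mathfrak M_N\ge 1\) to raise lower powers of \(\mathfrak M_N\) to the third.
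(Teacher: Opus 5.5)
Your proposal is correct and follows essentially the paper's proof. Both use the same six-term splitting of $\mathcal Q_N$ and the same telescoping $ff-gg=(f-g)f+g(f-g)$. Both apply Fubini with the cutoffs confining every frequency to $[0,2N]$, and each bilinear piece has prefactor one, so the coefficients give $1+1+1+2+2+2=9$.

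The only difference is cosmetic. The paper bounds all three weights by $\mathfrak M_N$ and then uses $\|h\|_{L^1}\le\|(1+U)h\|_{L^1}$. You absorb the weights sitting at the $h$-variables into the weighted norms, which would in fact give the sharper factor $\mathfrak M_N\le\mathfrak M_N^3$.
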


\begin{proof}
	Using the Dirac masses, we rewrite \(\mathcal Q_N[f](\omega)\) as
	\begin{equation}\label{QN:weighted:expanded}
		\begin{aligned}
			\mathcal Q_N[f](\omega)
			={}&
			\int_0^\omega \mathrm d\omega_1\,
			U(\omega_1)U(\omega-\omega_1)
			f(\omega_1)f(\omega-\omega_1)
			\chi_{\{\omega_1,\omega-\omega_1\le N\}} \\
			&-
			\int_0^\omega \mathrm d\omega_1\,
			U(\omega_1)U(\omega-\omega_1)
			f(\omega)f(\omega_1)
			\chi_{\{\omega,\omega_1\le N\}} \\
			&-
			\int_0^\omega \mathrm d\omega_1\,
			U(\omega_1)U(\omega-\omega_1)
			f(\omega)f(\omega-\omega_1)
			\chi_{\{\omega,\omega-\omega_1\le N\}} \\
			&-
			2\int_0^\infty \mathrm d\omega_1\,
			U(\omega_1)U(\omega+\omega_1)
			f(\omega_1)f(\omega)
			\chi_{\{\omega,\omega_1\le N\}} \\
			&+
			2\int_0^\infty \mathrm d\omega_1\,
			U(\omega_1)U(\omega+\omega_1)
			f(\omega_1)f(\omega+\omega_1)
			\chi_{\{\omega+\omega_1,\omega_1\le N\}} \\
			&+
			2\int_0^\infty \mathrm d\omega_1\,
			U(\omega_1)U(\omega+\omega_1)
			f(\omega)f(\omega+\omega_1)
			\chi_{\{\omega+\omega_1,\omega\le N\}} .
		\end{aligned}
	\end{equation}
	
	Accordingly, we define
	\[
	\mathcal Q_N[f]
	=
	Q_N^{(1)}[f]-Q_N^{(2)}[f]-Q_N^{(3)}[f]
	-2Q_N^{(4)}[f]+2Q_N^{(5)}[f]+2Q_N^{(6)}[f],
	\]
	where
	\begin{align*}
		Q_N^{(1)}[f](\omega)
		:={}&
		\int_0^\omega \mathrm d\omega_1\,
		U(\omega_1)U(\omega-\omega_1)
		f(\omega_1)f(\omega-\omega_1)
		\chi_{\{\omega_1,\omega-\omega_1\le N\}},\\
		Q_N^{(2)}[f](\omega)
		:={}&
		\int_0^\omega \mathrm d\omega_1\,
		U(\omega_1)U(\omega-\omega_1)
		f(\omega)f(\omega_1)
		\chi_{\{\omega,\omega_1\le N\}},\\
		Q_N^{(3)}[f](\omega)
		:={}&
		\int_0^\omega \mathrm d\omega_1\,
		U(\omega_1)U(\omega-\omega_1)
		f(\omega)f(\omega-\omega_1)
		\chi_{\{\omega,\omega-\omega_1\le N\}},\\
		Q_N^{(4)}[f](\omega)
		:={}&
		\int_0^\infty \mathrm d\omega_1\,
		U(\omega_1)U(\omega+\omega_1)
		f(\omega_1)f(\omega)
		\chi_{\{\omega,\omega_1\le N\}},\\
		Q_N^{(5)}[f](\omega)
		:={}&
		\int_0^\infty \mathrm d\omega_1\,
		U(\omega_1)U(\omega+\omega_1)
		f(\omega_1)f(\omega+\omega_1)
		\chi_{\{\omega+\omega_1,\omega_1\le N\}},\\
		Q_N^{(6)}[f](\omega)
		:={}&
		\int_0^\infty \mathrm d\omega_1\,
		U(\omega_1)U(\omega+\omega_1)
		f(\omega)f(\omega+\omega_1)
		\chi_{\{\omega+\omega_1,\omega\le N\}}.
	\end{align*}
	
	Since
	\[
\mathfrak	M_N=\sup_{\omega\in[0,2N]}(1+U(\omega))<\infty,
	\]
	we have
	\[
	0\le U(\omega)\le \mathfrak M_N,\qquad \omega\in[0,2N].
	\]

	We estimate each term separately.
	
	\medskip
	
	\noindent
	\textit{Estimate for \(Q_N^{(1)}\).}
	Using
	\[
	f(\omega_1)f(\omega-\omega_1)-g(\omega_1)g(\omega-\omega_1)
	=
	\bigl(f(\omega_1)-g(\omega_1)\bigr)f(\omega-\omega_1)
	+
	g(\omega_1)\bigl(f(\omega-\omega_1)-g(\omega-\omega_1)\bigr),
	\]
	we obtain
	\[
	\begin{aligned}
		&|Q_N^{(1)}[f](\omega)-Q_N^{(1)}[g](\omega)|\\
		\le{}&
		\int_0^\omega \mathrm d\omega_1\,
		U(\omega_1)U(\omega-\omega_1)
		\chi_{\{\omega_1,\omega-\omega_1\le N\}}
		|f(\omega_1)-g(\omega_1)|\,|f(\omega-\omega_1)|\\
		&+
		\int_0^\omega \mathrm d\omega_1\,
		U(\omega_1)U(\omega-\omega_1)
		\chi_{\{\omega_1,\omega-\omega_1\le N\}}
		|g(\omega_1)|\,|f(\omega-\omega_1)-g(\omega-\omega_1)|.
	\end{aligned}
	\]
	Multiplying by \(1+U(\omega)\), integrating in \(\omega\), and using that
	\(\omega\le 2N\) on the support of the integrand, we get
	\[
	\begin{aligned}
		&\|(1+U)\bigl(Q_N^{(1)}[f]-Q_N^{(1)}[g]\bigr)\|_{L^1([0,\infty))}\\
		\le{}&
	\mathfrak M_N^3
		\int_0^\infty \mathrm d\omega
		\int_0^\omega \mathrm d\omega_1\,
		\chi_{\{\omega_1,\omega-\omega_1\le N\}}
		|f(\omega_1)-g(\omega_1)|\,|f(\omega-\omega_1)|\\
		&+
		\mathfrak M_N^3
		\int_0^\infty \mathrm d\omega
		\int_0^\omega \mathrm d\omega_1\,
		\chi_{\{\omega_1,\omega-\omega_1\le N\}}
		|g(\omega_1)|\,|f(\omega-\omega_1)-g(\omega-\omega_1)|.
	\end{aligned}
	\]
	Applying Fubini's theorem and the change of variables
	\(\omega_2=\omega-\omega_1\), we find
	\[
	\begin{aligned}
		&\|(1+U)\bigl(Q_N^{(1)}[f]-Q_N^{(1)}[g]\bigr)\|_{L^1([0,\infty))}\\
		\le{}&
		\mathfrak M_N^3
		\int_0^N \mathrm d\omega_1
		\int_0^N \mathrm d\omega_2\,
		|f(\omega_1)-g(\omega_1)|\,|f(\omega_2)|\\
		&+
	\mathfrak	M_N^3
		\int_0^N \mathrm d\omega_1
		\int_0^N \mathrm d\omega_2\,
		|g(\omega_1)|\,|f(\omega_2)-g(\omega_2)|\\
		\le{}&
	\mathfrak	M_N^3
		\bigl(\|f\|_{L^1([0,\infty))}+\|g\|_{L^1([0,\infty))}\bigr)
		\|f-g\|_{L^1([0,\infty))}.
	\end{aligned}
	\]
	Since \(1+U\ge 1\), we have
	\[
	\|h\|_{L^1([0,\infty))}
	\le \|(1+U)h\|_{L^1([0,\infty))}.
	\]
	Therefore,
	\begin{equation}\label{QN:weighted:Q1}
		\begin{aligned}
			&\|(1+U)\bigl(Q_N^{(1)}[f]-Q_N^{(1)}[g]\bigr)\|_{L^1([0,\infty))}\\
			\le{}&
		\mathfrak	M_N^3
			\bigl(
			\|(1+U)f\|_{L^1([0,\infty))}
			+
			\|(1+U)g\|_{L^1([0,\infty))}
			\bigr)
			\|(1+U)(f-g)\|_{L^1([0,\infty))}.
		\end{aligned}
	\end{equation}
	
	\medskip
	
	\noindent
	\textit{Estimate for \(Q_N^{(2)}\).}
	Since
	\[
	f(\omega)f(\omega_1)-g(\omega)g(\omega_1)
	=
	\bigl(f(\omega)-g(\omega)\bigr)f(\omega_1)
	+
	g(\omega)\bigl(f(\omega_1)-g(\omega_1)\bigr),
	\]
	we have
	\[
	\begin{aligned}
		|Q_N^{(2)}[f](\omega)-Q_N^{(2)}[g](\omega)|
		\le{}&
		\int_0^\omega \mathrm d\omega_1\,
		U(\omega_1)U(\omega-\omega_1)
		\chi_{\{\omega,\omega_1\le N\}}
		|f(\omega)-g(\omega)|\,|f(\omega_1)|\\
		&+
		\int_0^\omega \mathrm d\omega_1\,
		U(\omega_1)U(\omega-\omega_1)
		\chi_{\{\omega,\omega_1\le N\}}
		|g(\omega)|\,|f(\omega_1)-g(\omega_1)|.
	\end{aligned}
	\]
	Hence
	\[
	\begin{aligned}
		&\|(1+U)\bigl(Q_N^{(2)}[f]-Q_N^{(2)}[g]\bigr)\|_{L^1([0,\infty))}\\
		\le{}&
		\mathfrak M_N^3
		\int_0^N \mathrm d\omega
		\int_0^\omega \mathrm d\omega_1\,
		|f(\omega)-g(\omega)|\,|f(\omega_1)|\\
		&+
		\mathfrak M_N^3
		\int_0^N \mathrm d\omega
		\int_0^\omega \mathrm d\omega_1\,
		|g(\omega)|\,|f(\omega_1)-g(\omega_1)|\\
		\le{}&
		\mathfrak M_N^3
		\bigl(
		\|(1+U)f\|_{L^1([0,\infty))}
		+
		\|(1+U)g\|_{L^1([0,\infty))}
		\bigr)
		\|(1+U)(f-g)\|_{L^1([0,\infty))}.
	\end{aligned}
	\]
	Thus
	\begin{equation}\label{QN:weighted:Q2}
		\begin{aligned}
			&\|(1+U)\bigl(Q_N^{(2)}[f]-Q_N^{(2)}[g]\bigr)\|_{L^1([0,\infty))}\\
			\le{}&
			\mathfrak M_N^3
			\bigl(
			\|(1+U)f\|_{L^1([0,\infty))}
			+
			\|(1+U)g\|_{L^1([0,\infty))}
			\bigr)
			\|(1+U)(f-g)\|_{L^1([0,\infty))}.
		\end{aligned}
	\end{equation}
	
	\medskip
	
	\noindent
	\textit{Estimate for \(Q_N^{(3)}\).}
Using	exactly the same argument as above, we obtain
	\begin{equation}\label{QN:weighted:Q3}
		\begin{aligned}
			&\|(1+U)\bigl(Q_N^{(3)}[f]-Q_N^{(3)}[g]\bigr)\|_{L^1([0,\infty))}\\
			\le{}&
			\mathfrak M_N^3
			\bigl(
			\|(1+U)f\|_{L^1([0,\infty))}
			+
			\|(1+U)g\|_{L^1([0,\infty))}
			\bigr)
			\|(1+U)(f-g)\|_{L^1([0,\infty))}.
		\end{aligned}
	\end{equation}
	
	\medskip
	
	\noindent
	\textit{Estimate for \(Q_N^{(4)}\).}
	Using
	\[
	f(\omega_1)f(\omega)-g(\omega_1)g(\omega)
	=
	\bigl(f(\omega_1)-g(\omega_1)\bigr)f(\omega)
	+
	g(\omega_1)\bigl(f(\omega)-g(\omega)\bigr),
	\]
	we get
	\[
	\begin{aligned}
		|Q_N^{(4)}[f](\omega)-Q_N^{(4)}[g](\omega)|
		\le{}&
		\int_0^\infty \mathrm d\omega_1\,
		U(\omega_1)U(\omega+\omega_1)
		\chi_{\{\omega,\omega_1\le N\}}
		|f(\omega_1)-g(\omega_1)|\,|f(\omega)|\\
		&+
		\int_0^\infty \mathrm d\omega_1\,
		U(\omega_1)U(\omega+\omega_1)
		\chi_{\{\omega,\omega_1\le N\}}
		|g(\omega_1)|\,|f(\omega)-g(\omega)|.
	\end{aligned}
	\]
	Therefore
	\[
	\begin{aligned}
		&\|(1+U)\bigl(Q_N^{(4)}[f]-Q_N^{(4)}[g]\bigr)\|_{L^1([0,\infty))}\\
		\le{}&
		\mathfrak M_N^3
		\int_0^N \mathrm d\omega
		\int_0^N \mathrm d\omega_1\,
		|f(\omega_1)-g(\omega_1)|\,|f(\omega)|\\
		&+
		\mathfrak M_N^3
		\int_0^N \mathrm d\omega
		\int_0^N \mathrm d\omega_1\,
		|g(\omega_1)|\,|f(\omega)-g(\omega)|\\
		\le{}&
		\mathfrak M_N^3
		\bigl(
		\|(1+U)f\|_{L^1([0,\infty))}
		+
		\|(1+U)g\|_{L^1([0,\infty))}
		\bigr)
		\|(1+U)(f-g)\|_{L^1([0,\infty))}.
	\end{aligned}
	\]
	Hence
	\begin{equation}\label{QN:weighted:Q4}
		\begin{aligned}
			&\|(1+U)\bigl(Q_N^{(4)}[f]-Q_N^{(4)}[g]\bigr)\|_{L^1([0,\infty))}\\
			\le{}&
			\mathfrak M_N^3
			\bigl(
			\|(1+U)f\|_{L^1([0,\infty))}
			+
			\|(1+U)g\|_{L^1([0,\infty))}
			\bigr)
			\|(1+U)(f-g)\|_{L^1([0,\infty))}.
		\end{aligned}
	\end{equation}
	
	\medskip
	
	\noindent
	\textit{Estimate for \(Q_N^{(5)}\).}
	Observing that
	\[
	f(\omega_1)f(\omega+\omega_1)-g(\omega_1)g(\omega+\omega_1)
	=
	\bigl(f(\omega_1)-g(\omega_1)\bigr)f(\omega+\omega_1)
	+
	g(\omega_1)\bigl(f(\omega+\omega_1)-g(\omega+\omega_1)\bigr),
	\]
	we find
	\[
	\begin{aligned}
		|Q_N^{(5)}[f](\omega)-Q_N^{(5)}[g](\omega)|
		\le{}&
		\int_0^\infty \mathrm d\omega_1\,
		U(\omega_1)U(\omega+\omega_1)
		\chi_{\{\omega+\omega_1,\omega_1\le N\}}
		|f(\omega_1)-g(\omega_1)|\,|f(\omega+\omega_1)|\\
		&+
		\int_0^\infty \mathrm d\omega_1\,
		U(\omega_1)U(\omega+\omega_1)
		\chi_{\{\omega+\omega_1,\omega_1\le N\}}
		|g(\omega_1)|\,|f(\omega+\omega_1)-g(\omega+\omega_1)|.
	\end{aligned}
	\]
	Integrating in \(\omega\) and using the change of variables
	\[
	\eta=\omega_1,\qquad \xi=\omega+\omega_1,
	\]
	whose Jacobian equals \(1\), we obtain
	\[
	\begin{aligned}
		&\|(1+U)\bigl(Q_N^{(5)}[f]-Q_N^{(5)}[g]\bigr)\|_{L^1([0,\infty))}\\
		\le{}&
		\mathfrak M_N^3
		\int_0^N \mathrm d\eta
		\int_\eta^N \mathrm d\xi\,
		|f(\eta)-g(\eta)|\,|f(\xi)|\\
		&+
		\mathfrak M_N^3
		\int_0^N \mathrm d\eta
		\int_\eta^N \mathrm d\xi\,
		|g(\eta)|\,|f(\xi)-g(\xi)|\\
		\le{}&
		\mathfrak M_N^3
		\bigl(
		\|(1+U)f\|_{L^1([0,\infty))}
		+
		\|(1+U)g\|_{L^1([0,\infty))}
		\bigr)
		\|(1+U)(f-g)\|_{L^1([0,\infty))}.
	\end{aligned}
	\]
	Hence
	\begin{equation}\label{QN:weighted:Q5}
		\begin{aligned}
			&\|(1+U)\bigl(Q_N^{(5)}[f]-Q_N^{(5)}[g]\bigr)\|_{L^1([0,\infty))}\\
			\le{}&
			\mathfrak M_N^3
			\bigl(
			\|(1+U)f\|_{L^1([0,\infty))}
			+
			\|(1+U)g\|_{L^1([0,\infty))}
			\bigr)
			\|(1+U)(f-g)\|_{L^1([0,\infty))}.
		\end{aligned}
	\end{equation}
	
	\medskip
	
	\noindent
	\textit{Estimate for \(Q_N^{(6)}\).}
	Finally, we write
	\[
	f(\omega)f(\omega+\omega_1)-g(\omega)g(\omega+\omega_1)
	=
	\bigl(f(\omega)-g(\omega)\bigr)f(\omega+\omega_1)
	+
	g(\omega)\bigl(f(\omega+\omega_1)-g(\omega+\omega_1)\bigr),
	\]
	and therefore
	\[
	\begin{aligned}
		|Q_N^{(6)}[f](\omega)-Q_N^{(6)}[g](\omega)|
		\le{}&
		\int_0^\infty \mathrm d\omega_1\,
		U(\omega_1)U(\omega+\omega_1)
		\chi_{\{\omega+\omega_1,\omega\le N\}}
		|f(\omega)-g(\omega)|\,|f(\omega+\omega_1)|\\
		&+
		\int_0^\infty \mathrm d\omega_1\,
		U(\omega_1)U(\omega+\omega_1)
		\chi_{\{\omega+\omega_1,\omega\le N\}}
		|g(\omega)|\,|f(\omega+\omega_1)-g(\omega+\omega_1)|.
	\end{aligned}
	\]
	Thus, we find
	\[
	\begin{aligned}
		&\|(1+U)\bigl(Q_N^{(6)}[f]-Q_N^{(6)}[g]\bigr)\|_{L^1([0,\infty))}\\
		\le{}&
		\mathfrak M_N^3
		\int_0^N \mathrm d\omega
		\int_0^{N-\omega} \mathrm d\omega_1\,
		|f(\omega)-g(\omega)|\,|f(\omega+\omega_1)|\\
		&+
		\mathfrak M_N^3
		\int_0^N \mathrm d\omega
		\int_0^{N-\omega} \mathrm d\omega_1\,
		|g(\omega)|\,|f(\omega+\omega_1)-g(\omega+\omega_1)|\\
		\le{}&
		\mathfrak M_N^3
		\bigl(
		\|(1+U)f\|_{L^1([0,\infty))}
		+
		\|(1+U)g\|_{L^1([0,\infty))}
		\bigr)
		\|(1+U)(f-g)\|_{L^1([0,\infty))}.
	\end{aligned}
	\]
	Hence
	\begin{equation}\label{QN:weighted:Q6}
		\begin{aligned}
			&\|(1+U)\bigl(Q_N^{(6)}[f]-Q_N^{(6)}[g]\bigr)\|_{L^1([0,\infty))}\\
			\le{}&
			\mathfrak M_N^3
			\bigl(
			\|(1+U)f\|_{L^1([0,\infty))}
			+
			\|(1+U)g\|_{L^1([0,\infty))}
			\bigr)
			\|(1+U)(f-g)\|_{L^1([0,\infty))}.
		\end{aligned}
	\end{equation}
	
	Combining \eqref{QN:weighted:Q1}--\eqref{QN:weighted:Q6}, we conclude that
	\[
	\begin{aligned}
		&\|(1+U)\bigl(\mathcal Q_N[f]-\mathcal Q_N[g]\bigr)\|_{L^1([0,\infty))}\\
		\le{}&
		\|(1+U)\bigl(Q_N^{(1)}[f]-Q_N^{(1)}[g]\bigr)\|_{L^1([0,\infty))}
		+\|(1+U)\bigl(Q_N^{(2)}[f]-Q_N^{(2)}[g]\bigr)\|_{L^1([0,\infty))}\\
		&+
		\|(1+U)\bigl(Q_N^{(3)}[f]-Q_N^{(3)}[g]\bigr)\|_{L^1([0,\infty))}
		+2\|(1+U)\bigl(Q_N^{(4)}[f]-Q_N^{(4)}[g]\bigr)\|_{L^1([0,\infty))}\\
		&+
		2\|(1+U)\bigl(Q_N^{(5)}[f]-Q_N^{(5)}[g]\bigr)\|_{L^1([0,\infty))}
		+2\|(1+U)\bigl(Q_N^{(6)}[f]-Q_N^{(6)}[g]\bigr)\|_{L^1([0,\infty))}\\
		\le{}&
		9\mathfrak M_N^3
		\bigl(
		\|(1+U)f\|_{L^1([0,\infty))}
		+
		\|(1+U)g\|_{L^1([0,\infty))}
		\bigr)
		\|(1+U)(f-g)\|_{L^1([0,\infty))}.
	\end{aligned}
	\]
	This proves the lemma.
\end{proof}

\subsection{Global existence of the approximated equations}
\begin{proposition}\label{Prop:cutoff:global}
Assume that
\[
f_0\ge 0,
\qquad
f_0\in L^1_{1+U}\bigl([0,\infty)\bigr),
\]
and define
\[
f_{N,0}(\omega):=f_0(\omega)\chi_{\{\omega\le N\}}.
\]
Assume moreover that \(U\) satisfies the condition of Lemma \ref{Lemma:EntropyTestCutoff} and
\[
\mathfrak m_N:=\inf_{\omega\in[N,2N]}U(\omega)>0,
\]
\[
\sup_{\omega\in[0,2N]} U(\omega)<\infty .
\]
Then \eqref{Cutoff2-new} admits a unique global solution
\[
f_N\in
C\Bigl([0,\infty);L^1_{1+U}\bigl([0,\infty)\bigr)\Bigr)
\cap
C^1\Bigl([0,\infty);L^1_{1+U}\bigl([0,\infty)\bigr)\Bigr),
\]
where the weighted norm is defined in Lemma \ref{Lemma:QN:Lipschitz:weighted}.

Moreover, \(f_N\) is nonnegative and satisfies
\[
\operatorname{supp} f_N(t,\cdot)\subset [0,2N]
\qquad\text{for all }t\ge 0.
\]
In addition, for all \(t\ge 0\),
\[
\int_{[0,\infty)} \mathrm d\omega\, U(\omega)\omega f_N(t,\omega)
=
\int_{[0,\infty)} \mathrm d\omega\, U(\omega)\omega f_{N,0}(\omega),
\]
and
\[
\int_{[0,N]} \mathrm d\omega\, f_N(t,\omega)
\le
\int_{[0,N]} \mathrm d\omega\, f_{N,0}(\omega).
\]
Finally,  let
\(e:[0,\infty)\to[0,\infty)\) be a  function satisfying \[
e\in C^1([0,\infty)), \qquad e \text{ is convex}, \qquad e(0)=0, \qquad e\ge 0.
\] 
If
\[
\int_{[0,\infty)} \mathrm d\omega\,
\frac{e\!\left(U(\omega)f_0(\omega)\right)}{U(\omega)}<\infty,
\]
then, for all \(t\ge 0\),
\[
\int_{[0,N]} \mathrm d\omega\,
\frac{e\!\left(U(\omega)f_N(t,\omega)\right)}{U(\omega)}
\le
\int_{[0,\infty)} \mathrm d\omega\,
\frac{e\!\left(U(\omega)f_0(\omega)\right)}{U(\omega)}.
\]
\end{proposition}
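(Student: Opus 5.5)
Local existence and uniqueness follow from Lemma~\ref{Lemma:QN:Lipschitz:weighted}: $\mathcal Q_N$ is locally Lipschitz on $L^1_{1+U}$, so Picard iteration for $f_N(t)=f_{N,0}+\int_0^t\mathcal Q_N[f_N]\,\mathrm ds$ gives a unique maximal solution in $C\cap C^1([0,T^*);L^1_{1+U})$. Global existence then reduces to an a priori bound on $\|(1+U)f_N(t)\|_{L^1}$ on bounded time intervals, combined with nonnegativity.

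\emph{Support.} If $\omega>2N$, every term of $\mathcal Q_N[f](\omega)$ vanishes. The gain term needs $\omega_1,\omega-\omega_1\le N$, and every other term carries $\chi_{\{\omega\le N\}}$ or $\chi_{\{\omega+\omega_1\le N\}}$. Hence $\partial_t f_N=0$ there, and $f_{N,0}=0$ there, so $\operatorname{supp}f_N\subset[0,2N]$.

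\emph{Nonnegativity.} Write $\mathcal Q_N[f]=G[f]-f\,L[f]$. Here $L[f](\omega)=\chi_{\{\omega\le N\}}\big(2\int_0^\omega U_1U(\omega-\omega_1)f_1\,\mathrm d\omega_1+2\int U_1U(\omega+\omega_1)f_1\chi_{\{\omega_1\le N\}}\,\mathrm d\omega_1\big)$, with the symmetric loss terms combined. The remaining terms make up $G[f]$, and $G[f]\ge0$ whenever $f\ge0$. One checks the term $+2\,f(\omega)f(\omega+\omega_1)$, which is also a gain, by placing it in $G$ with the factor $f(\omega)$ handled through $L$ of the wrong sign; I would rather treat it as part of a modified linear coefficient. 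The standard route is then to solve $\partial_t h=G[h^+]-h\,L[h^+]$, for which Duhamel gives
\[
h(t)=e^{-\int_0^tL}h_0+\int_0^t e^{-\int_s^tL}G[h^+]\,\mathrm ds\ge0,
\]
so $h=h^+$ and $h$ coincides with $f_N$ by uniqueness. The linear coefficient is bounded because $U\le\mathfrak M_N$ on $[0,2N]$ and $f\in L^1$.

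\emph{A priori bounds.} Energy conservation is Lemma~\ref{Lemma:EnergyCutoff}, applied with the test function $\omega U(\omega)$, which is bounded on the support. For the mass on $[0,N]$, integrate $\mathcal Q_N$ against $\chi_{\{\omega\le N\}}$. On $\{\omega\le N\}$ all cutoffs reduce to the single condition that all frequencies are $\le N$. The usual 3-wave mass computation gives $\frac{\mathrm d}{\mathrm dt}\int_0^Nf_N=-\iint_{\omega_1+\omega_2\le N}U_1U_2f_1f_2\le0$, since each triad loses one particle. I expect to verify this by a direct symmetrization like the one in Lemma~\ref{Lemma:EnergyCutoff}.

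The main obstacle is controlling the mass on $(N,2N]$, which the $[0,N]$ count does not see. Here $\mathfrak m_N>0$ is used: energy conservation gives
\[
\int_N^{2N}f_N\le\frac{1}{N\,\mathfrak m_N}\int\omega Uf_{N,0}.
\]
Thus $\|(1+U)f_N(t)\|_{L^1}\le(1+\mathfrak M_N)\big(\|f_{N,0}\|_{L^1}+(N\mathfrak m_N)^{-1}E_0\big)$ uniformly in $t$, and no blow-up can occur. The energy $E_0$ is finite because $\omega\le N$ on the support of $f_{N,0}$.

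\emph{Entropy.} Lemma~\ref{Lemma:EntropyTestCutoff} gives that $\mathcal E_e[f_N\chi_{\{\omega\le N\}}]$ is nonincreasing. Its value at $t=0$ is
\[
\int_0^N\frac{e(Uf_0)}{U}\,\mathrm d\omega\le\mathcal E_e[f_0],
\]
since $e\ge0$. The differentiation in time is justified by the $C^1$ regularity of $f_N$ in $L^1_{1+U}$ on the compact support, using that $e'$ is locally bounded. When $e(Uf)$ is not integrable this step requires care; I would first prove it for truncated convex $e$, or for bounded $f_N$, and pass to the limit by monotone convergence.
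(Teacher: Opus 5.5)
Your outline matches the paper's proof in most steps: Cauchy--Lipschitz via Lemma~\ref{Lemma:QN:Lipschitz:weighted}, the support argument, energy conservation, control of $(N,2N]$ through $\mathfrak m_N$, and the entropy bound via Lemma~\ref{Lemma:EntropyTestCutoff}. Your $h^+$ device for positivity is more careful than the paper's Step~2, which uses $G_N[f_N]\ge0$ before knowing $f_N\ge0$. The gap is in the bound $\int_0^N f_N(t)\le\int_0^N f_{N,0}$. Your no-blow-up argument rests on it, since energy gives no control of $f_N$ near $\omega=0$.

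The identity you propose, $\frac{\mathrm d}{\mathrm dt}\int_0^N f_N=-\iint_{\omega_1+\omega_2\le N}U_1U_2f_1f_2$, is false, and so is its justification. First, the loss term $-2f(\omega)f(\omega_1)\chi_{\{\omega,\omega_1\le N\}}$ puts no restriction on the third frequency $\omega+\omega_1$, which can lie in $(N,2N]$; this is exactly how mass leaves $[0,N]$. Second, ``each triad loses one particle'' is not a valid mechanism, because the triad factor $f_1f_2-ff_1-ff_2$ has no sign. Doing the symmetrization correctly (ordered pairs $x\ge y$) gives
\[
\begin{aligned}
\frac{\mathrm d}{\mathrm dt}\int_0^N f_N\,\mathrm d\omega
={}&2\iint_{0\le y\le x\le N}\mathrm dx\,\mathrm dy\,f_N(t,x)f_N(t,y)\Big\{\big[U(x)U(y)\chi_{\{x+y\le N\}}-U(y)U(x+y)\big]\\
&\qquad+U(x)\big[U(y)+U(x-y)-U(x+y)\big]-U(y)U(x-y)\Big\}.
\end{aligned}
\]
For $U(\omega)=\omega$ and $g=\omega f_N$, this equals $\iint_{[0,N]^2}g(x)g(y)\bigl(\chi_{\{x+y\le N\}}-3\bigr)\,\mathrm dx\,\mathrm dy$. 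For instance, at $t=0$ with $f_{N,0}$ supported in $[0,N/2]$, the rate is $-2(\int g)^2$, not $-(\int g)^2$.

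The right-hand side is nonpositive only because $U$ is nondecreasing and satisfies $U(x+y)\ge U(y)+U(x-y)$ for $x\ge y$. These are exactly the hypotheses of Lemma~\ref{Lemma:EntropyTestCutoff}, and your mass argument never uses them. The paper gets the bound directly from that lemma with the linear entropy $e(r)=r$, which is admissible (convex, $C^1$, $e(0)=0$, $e\ge0$) and gives $\mathcal E_e[f_N\chi_{\{\omega\le N\}}]=\int_0^N f_N$. Alternatively, you can keep your direct computation, but you must close it with these monotonicity and super-additivity inequalities, not with a particle-counting identity. With that replacement, the rest of your argument goes through.
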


\begin{proof}
We divide the proof into several steps.

\medskip

\noindent
\textit{Step 1: Local existence and uniqueness.}
By Lemma \ref{Lemma:QN:Lipschitz:weighted}, the map
\[
f\mapsto \mathcal Q_N[f]
\]
is locally Lipschitz on
\(
L^1_{1+U}\bigl([0,\infty)\bigr).
\)
Therefore, the Cauchy--Lipschitz theorem in Banach spaces yields a maximal time
\(T_{\max}\in(0,\infty]\) and a unique solution to  \eqref{Cutoff2-new} 
\[
f_N\in
C\Bigl([0,T_{\max});L^1_{1+U}\bigl([0,\infty)\bigr)\Bigr)
\cap
C^1\Bigl([0,T_{\max});L^1_{1+U}\bigl([0,\infty)\bigr)\Bigr).
\]
Moreover, if \(T_{\max}<\infty\), then
\[
\limsup_{t\uparrow T_{\max}}
\|f_N(t)\|_{L^1_{1+U}}
=+\infty.
\]

\medskip

\noindent
\textit{Step 2: Positivity.}
We write
\[
\mathcal Q_N[f](\omega)
=
G_N[f](\omega)
-
L_N[f](\omega)f(\omega)
+
A_N[f](\omega)f(\omega),
\]
where
\[
\begin{aligned}
G_N[f](\omega)
:={}&
\int_0^\omega \mathrm d\omega_1\,
U(\omega_1)U(\omega-\omega_1)
f(\omega_1)f(\omega-\omega_1)
\chi_{\{\omega_1,\omega-\omega_1\le N\}} \\
&+
2\int_0^\infty \mathrm d\omega_1\,
U(\omega_1)U(\omega+\omega_1)
f(\omega_1)f(\omega+\omega_1)
\chi_{\{\omega+\omega_1,\omega_1\le N\}},
\end{aligned}
\]
\[
\begin{aligned}
L_N[f](\omega)
:={}&
\int_0^\omega \mathrm d\omega_1\,
U(\omega_1)U(\omega-\omega_1)
\chi_{\{\omega,\omega_1\le N\}}\,f(\omega_1) \\
&+
\int_0^\omega \mathrm d\omega_1\,
U(\omega_1)U(\omega-\omega_1)
\chi_{\{\omega,\omega-\omega_1\le N\}}\,f(\omega-\omega_1) \\
&+
2\int_0^\infty \mathrm d\omega_1\,
U(\omega_1)U(\omega+\omega_1)
\chi_{\{\omega,\omega_1\le N\}}\,f(\omega_1),
\end{aligned}
\]
and
\[
A_N[f](\omega)
:=
2\int_0^\infty \mathrm d\omega_1\,
U(\omega_1)U(\omega+\omega_1)
\chi_{\{\omega+\omega_1,\omega\le N\}}\,f(\omega+\omega_1).
\]
If \(f\ge0\), then
\[
G_N[f]\ge0,\qquad L_N[f]\ge0,\qquad A_N[f]\ge0.
\]

For a.e. \(\omega\ge0\), we have
\[
\partial_t f_N(t,\omega)
+
\bigl(L_N[f_N(t)](\omega)-A_N[f_N(t)](\omega)\bigr)
f_N(t,\omega)
=
G_N[f_N(t)](\omega).
\]
Hence, we can write
\[
\begin{aligned}
f_N(t,\omega)
={}&
f_{N,0}(\omega)
\exp\!\left(
-\int_0^t
\bigl(L_N[f_N(s)](\omega)-A_N[f_N(s)](\omega)\bigr)
\,\mathrm ds
\right) \\
&+
\int_0^t
\exp\!\left(
-\int_s^t
\bigl(L_N[f_N(\tau)](\omega)-A_N[f_N(\tau)](\omega)\bigr)
\,\mathrm d\tau
\right)
G_N[f_N(s)](\omega)\,\mathrm ds .
\end{aligned}
\]
Moreover,  if \(f\ge0\), then
\[
\mathcal Q_N[f](\omega)=G_N[f](\omega)\ge0.
\]
Since \(f_{N,0}\ge0\), it follows that
\[
f_N(t,\omega)\ge0
\qquad\text{for a.e. }\omega\ge0,\quad t\in[0,T_{\max}).
\]

\medskip

\noindent
\textit{Step 3: Support property.}
We prove that
\[
f_N(t,\omega)=0
\qquad\text{for a.e. }\omega>2N.
\]

We let \(\omega>2N\). If the $G_N$ term is nonzero, then
\(\omega_1\le N\) and \(\omega-\omega_1\le N\), which implies
\(\omega\le 2N\), a contradiction. Hence the $G_N$ term vanishes.

In the remaining terms, each cutoff enforces either \(\omega\le N\) or
\(\omega+\omega_1\le N\), which cannot hold when \(\omega>2N\).
Thus \(\mathcal Q_N[f](\omega)=0\).

Thus, for a.e. \(\omega>2N\),
\[
\mathcal Q_N[f](\omega)=0.
\]
Since \(f_{N,0}(\omega)=0\) for a.e. \(\omega>2N\) and the equation gives
\[
\partial_t f_N(t,\omega)=0
\qquad\text{for a.e. }\omega>2N,
\]
then,
\[
f_N(t,\omega)=0
\qquad\text{for a.e. }\omega>2N,
\qquad t\in[0,T_{\max}).
\]
\medskip

\noindent
\textit{Step 4: A priori estimates.}
By Lemma \ref{Lemma:EntropyTestCutoff}, we have
\[
\frac{\mathrm d}{\mathrm dt}\int_{[0,N]} \mathrm d\omega\, f_N
\le 0,
\qquad
\int_{[0,N]} \mathrm d\omega\, f_N(t)
\le
\int_{[0,N]} \mathrm d\omega\, f_{N,0}.
\]

By Lemma \ref{Lemma:EnergyCutoff}, we also have
\[
\int_{[0,\infty)} \mathrm d\omega\, U(\omega)\omega f_N(t,\omega)
=
\int_{[0,\infty)} \mathrm d\omega\, U(\omega)\omega f_{N,0}(\omega).
\]

Since the support is contained in \([0,2N]\), we split:
\[
\int_{[0,\infty)} \mathrm d\omega\, (1+U(\omega))f_N
=
\int_{[0,N]} \mathrm d\omega\, (1+U(\omega))f_N
+
\int_{[N,2N]} \mathrm d\omega\, (1+U(\omega))f_N.
\]

On \([0,N]\), we observe that
\[
\int_{[0,N]} \mathrm d\omega\, (1+U(\omega))f_N
\le
\left(1+\sup_{[0,N]}U\right)\int_{[0,N]} \mathrm d\omega\, f_N.
\]

On \([N,2N]\), using \(U(\omega)\ge \mathfrak m_N\) and \(\omega\ge N\), we find
\[
1+U(\omega)\le \mathfrak C\,U(\omega)\omega,
\]
for some constant \(\mathfrak C>0\) independent of the initial condition but possibly depending on \(N\), which implies
\[
\int_{[N,2N]} \mathrm d\omega\, (1+U(\omega))f_N
\le
\mathfrak C \int_{[N,2N]} \mathrm d\omega\, U(\omega)\omega f_N.
\]

Combining the above estimates gives us a uniform bound in time.

 Since the uniform bound prevents blow-up, we deduce \(T_{\max}=\infty\).

\medskip

\noindent
\textit{Step 5: Entropy estimate.}
This follows from Lemma \ref{Lemma:EntropyTestCutoff}.

\end{proof}

\subsection{A compactness in time lemma}

\begin{lemma}\label{lem:compactness-local-measures}
	Let \(T>0\). Let \(\{f_n\}_{n\ge1}\) be a sequence of nonnegative functions on
	\([0,T]\times(0,\infty)\) such that:
	
	\begin{enumerate}
		\item for every compact set \(K\Subset(0,\infty)\),
		\begin{equation}\label{eq:compactness-local-mass}
			\sup_{n\ge1}\sup_{t\in[0,T]}\int_K \mathrm d\omega\, f_n(t,\omega)<\infty;
		\end{equation}
		
		\item for every \(\varphi\in C_c^2((0,\infty))\), the functions
		\[
		F_n^\varphi(t):=\int_0^\infty \mathrm d\omega\, \varphi(\omega)f_n(t,\omega),
		\qquad t\in[0,T],
		\]
		are  bounded and equicontinuous on \([0,T]\), uniformly in \(n\).
	\end{enumerate}
	
	Then there exist a subsequence, still denoted by \(\{f_n\}\), and a map
	\[
	f\in C\bigl([0,T];w\text{-}\mathcal M_{\mathrm{loc}}((0,\infty))\bigr)
	\]
	such that, for every \(\varphi\in C_c((0,\infty))\),
	\[
	\int_0^\infty \mathrm d\omega\, \varphi(\omega)f_n(t,\omega)
	\longrightarrow
	\int_0^\infty f(t,\mathrm d\omega)\,\varphi(\omega)
	\qquad\text{uniformly for }t\in[0,T]  \mbox{ as } n\to\infty.
	\]
	Equivalently,
	\[
	f_n\longrightarrow f
	\qquad\text{in }C\bigl([0,T];w\text{-}\mathcal M_{\mathrm{loc}}((0,\infty))\bigr)
	\qquad\text{as } n\to\infty.
	\]
\end{lemma}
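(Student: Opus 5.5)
The plan is an Arzel\`a--Ascoli argument on a countable dense family of test functions, followed by a density argument and identification of the limit via the Riesz representation theorem.

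First I fix an exhaustion of \((0,\infty)\) by compacts \(K_j=[1/j,j]\). I also fix a countable family \(\mathcal D=\{\varphi_k\}_{k\ge1}\subset C_c^2((0,\infty))\) that is dense in \(C_c((0,\infty))\) in the following sense: for every \(\varphi\in C_c((0,\infty))\) with \(\operatorname{supp}\varphi\subset K_j\) and every \(\varepsilon>0\), there is \(\varphi_k\in\mathcal D\) with \(\operatorname{supp}\varphi_k\subset K_{j+1}\) and \(\|\varphi-\varphi_k\|_\infty\le\varepsilon\). Such a family can be built from polynomials with rational coefficients multiplied by fixed smooth cutoffs \(\theta_j\equiv1\) on \(K_j\), supported in \(K_{j+1}\), using Stone--Weierstrass.

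By hypothesis (2), each sequence \(\{F_n^{\varphi_k}\}_n\) is bounded and equicontinuous on \([0,T]\). Arzel\`a--Ascoli together with a Cantor diagonal extraction therefore gives one subsequence along which \(F_n^{\varphi_k}\to F^{k}\) uniformly on \([0,T]\) for every \(k\), with each \(F^k\) continuous.

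Next I extend the convergence to all \(\varphi\in C_c((0,\infty))\). Fix \(\varphi\) supported in \(K_j\), and set \(M_{j+1}:=\sup_n\sup_t\int_{K_{j+1}}f_n\), which is finite by (1). For \(\varphi_k\) as above,
\[
\sup_{t}\Bigl|F_n^{\varphi}(t)-F_m^{\varphi}(t)\Bigr|\le 2\varepsilon M_{j+1}+\sup_t\bigl|F_n^{\varphi_k}(t)-F_m^{\varphi_k}(t)\bigr|.
\]
So \(\{F_n^\varphi\}\) is uniformly Cauchy on \([0,T]\), and I define \(\Lambda_t(\varphi):=\lim_n F_n^\varphi(t)\). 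This convergence is uniform in \(t\), and \(t\mapsto\Lambda_t(\varphi)\) is continuous as a uniform limit of continuous functions. Here \(F_n^\varphi\) is continuous for \(\varphi\in C_c\) by the same approximation, using uniform convergence of the \(F_n^{\varphi_k}\), which are equicontinuous.

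For each \(t\), \(\Lambda_t\) is linear and nonnegative on \(C_c((0,\infty))\), since each \(f_n\ge0\). The Riesz--Markov representation theorem, valid on the locally compact, \(\sigma\)-compact space \((0,\infty)\), then yields a unique nonnegative local Radon measure \(f(t,\mathrm d\omega)\in\mathcal M_{+,\mathrm{loc}}((0,\infty))\) with \(\Lambda_t(\varphi)=\int\varphi\,f(t,\mathrm d\omega)\). It also gives \(f(t,K_j)\le M_{j+1}\), because \(\int\varphi\, f(t,\mathrm d\omega)\le\|\varphi\|_\infty M_{j+1}\) for \(\varphi\) supported in \(K_{j+1}\). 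Continuity of \(t\mapsto\int\varphi\,f(t,\mathrm d\omega)\) for every \(\varphi\in C_c\) is exactly the statement \(f\in C([0,T];w\text{-}\mathcal M_{\mathrm{loc}})\), and the uniform convergence in \(t\) is the claimed convergence.

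The main obstacle is the density step. Hypothesis (2) is only given for \(C^2_c\) test functions, so the passage to \(C_c\) must use the uniform local mass bound (1) on a slightly larger compact set \(K_{j+1}\). This is why I choose approximants whose supports are controlled. Everything else is standard.
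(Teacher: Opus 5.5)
Your proposal is correct and follows essentially the same route as the paper: a countable, support-controlled dense family in $C_c^2((0,\infty))$, Arzel\`a--Ascoli with a diagonal extraction, extension to all of $C_c((0,\infty))$ using the local mass bound (1) on a slightly larger compact set, and identification of the limit through the Riesz representation theorem. The differences are only cosmetic. You prove directly that $F_n^\varphi$ is uniformly Cauchy in $t$, which gives uniform convergence at once; the paper instead first defines the limit functional and then recovers uniformity in time using a mass bound for the limit measure. You also apply Riesz--Markov once on $(0,\infty)$, whereas the paper applies it on each compact set and glues the resulting measures.
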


\begin{proof}
	We choose a countable family \(\{\varphi_k\}_{k\ge1}\subset C_c^2((0,\infty))\)
	which is dense in \(C_c((0,\infty))\) in the following sense: for every compact
	interval \(K=[a,b]\Subset(0,\infty)\) and every
	\(\psi\in C_c((0,\infty))\) with \(\operatorname{supp}\psi\Subset(a,b)\),
	there exists a sequence \(\varphi_{k_m}\) with
	\(\operatorname{supp}\varphi_{k_m}\Subset(a,b)\) such that
	\[
	\lim_{m\to\infty}\|\varphi_{k_m}-\psi\|_{L^\infty(K)}=0.
	\]
	
	For each \(k\), we define
	\[
	F_n^{\varphi_k}(t):=\int_0^\infty \mathrm d\omega\, \varphi_k(\omega)f_n(t,\omega).
	\]
	By assumption, for each fixed \(k\), the family
	\(\{F_n^{\varphi_k}\}_{n\ge1}\) is uniformly bounded and equicontinuous on
	\([0,T]\). Hence, by the Arzel\`a--Ascoli theorem, it is relatively compact in
	\(C([0,T])\). By a diagonal extraction, there exists a subsequence, still denoted by
	\(\{f_n\}\), such that for every \(k\ge1\),
	\[
	F_n^{\varphi_k}\longrightarrow F^{\varphi_k}
	\qquad\text{in }C([0,T])
	\qquad\text{as } n\to\infty
	\]
	for some \(F^{\varphi_k}\in C([0,T])\).
	
	We now define the limit against arbitrary compactly supported continuous test functions.
	We fix \(t\in[0,T]\), and let \(\psi\in C_c((0,\infty))\). We choose
	\(0<a<b<\infty\) such that
	\[
	\operatorname{supp}\psi\Subset(a,b),
	\]
	and set \(K=[a,b]\). Let \(\{\varphi_{k_m}\}_{m\ge1}\subset\{\varphi_k\}_{k\ge1}\)
	be such that
	\[
	\operatorname{supp}\varphi_{k_m}\Subset(a,b),
	\qquad
	\|\varphi_{k_m}-\psi\|_{L^\infty(K)}\to0
	\qquad\text{as }m\to\infty.
	\]
 
	It follows that, for each fixed \(t\), the sequence
	\(\{F^{\varphi_{k_m}}(t)\}_{m\ge1}\) is Cauchy. Indeed, for \(m,\ell\) large, we can estimate
	\[
	\begin{aligned}
		|F^{\varphi_{k_m}}(t)-F^{\varphi_{k_\ell}}(t)|
		&\le
		\limsup_{n\to\infty}
		\left|
		\int_0^\infty \mathrm d\omega\,
		(\varphi_{k_m}-\varphi_{k_\ell})(\omega)f_n(t,\omega)
		\right| \\
		&\le
		\|\varphi_{k_m}-\varphi_{k_\ell}\|_{L^\infty(K)}
		\sup_{n\ge1}\sup_{s\in[0,T]}\int_K \mathrm d\omega\, f_n(s,\omega),
	\end{aligned}
	\]
	and the right-hand side tends to \(0\) as \(m,\ell\to\infty\). Moreover, the limit
	does not depend on the chosen approximating sequence. We may therefore define
	\[
	F^\psi(t):=\lim_{m\to\infty}F^{\varphi_{k_m}}(t).
	\]
	
	By construction, for each fixed \(t\in[0,T]\), the map
	\[
	\psi\longmapsto F^\psi(t)
	\]
	is linear on \(C_c((0,\infty))\). Moreover, if
	\(K\Subset(0,\infty)\) and \(\operatorname{supp}\psi\subset K\), then
	\[
	|F^\psi(t)|
	\le
	\|\psi\|_{L^\infty(K)}
	\sup_{n\ge1}\sup_{s\in[0,T]}\int_K \mathrm d\omega\, f_n(s,\omega).
	\]
	Indeed, this estimate is first established for the approximating sequence
	\(\varphi_{k_m}\), and then one passes to the limit as \(m\to\infty\).
	
	We fix \(t\in[0,T]\) and \(K\Subset(0,\infty)\). The map \(F^\psi\), restricted to
	functions \(\psi\in C_c((0,\infty))\) with \(\operatorname{supp}\psi\subset K\), is
	bounded with respect to the \(L^\infty(K)\)-norm. Hence it extends uniquely to a
	bounded linear functional on \(C(K)\), which we still denote by \(F^t\). Since the
	original functionals are positive on nonnegative test functions and \(f_n\ge0\), the
	extension is positive. By the Riesz representation theorem, there exists a finite
	nonnegative Radon measure \(f_K(t)\) on \(K\) such that
	\[
	F^t(\psi)=\int_K \psi(\omega)\,f_K(t,\mathrm d\omega)
	\qquad\text{for all }\psi\in C(K).
	\]
	
	These measures are compatible as \(K\) varies. Indeed, if \(K_1\subset K_2\), then the
	representations on \(K_1\) and \(K_2\) agree on continuous functions supported in the
	interior of \(K_1\), and hence, by uniqueness of the Riesz representation, the
	restriction of \(f_{K_2}(t)\) to \(K_1\) agrees with \(f_{K_1}(t)\). Therefore the
	family \(\{f_K(t)\}_K\) defines a unique nonnegative Radon measure
	\[
	f(t)\in \mathcal M_{\mathrm{loc}}((0,\infty))
	\]
	such that
	\[
	F^\psi(t)=\int_0^\infty f(t,\mathrm d\omega)\,\psi(\omega)
	\qquad\text{for all }\psi\in C_c((0,\infty)).
	\]
	
	We next verify that the measures \(f(t)\) have uniformly bounded mass on compact
	intervals. Let \(K\Subset(0,\infty)\), and choose \(\eta\in C_c((0,\infty))\) such that
	\[
	0\le \eta\le 1,
	\qquad
	\eta\equiv1 \quad\text{on }K.
	\]
	Then for every \(t\in[0,T]\), we can bound
	\[
	f(t)(K)
	\le
	\int_0^\infty f(t,\mathrm d\omega)\,\eta(\omega)
	=
	F^\eta(t).
	\]
	Using the local boundedness estimate above, we obtain
	\[
	F^\eta(t)
	\le
	\|\eta\|_{L^\infty}
	\sup_{n\ge1}\sup_{s\in[0,T]}
	\int_{\operatorname{supp}\eta}\mathrm d\omega\, f_n(s,\omega)
	\le
	\sup_{n\ge1}\sup_{s\in[0,T]}
	\int_{\operatorname{supp}\eta}\mathrm d\omega\, f_n(s,\omega).
	\]
	Therefore, we get
	\[
	\sup_{t\in[0,T]} f(t)(K)<\infty.
	\]
	
	We now prove that the convergence is uniform in time. Let
	\(\psi\in C_c((0,\infty))\), and choose \(K\Subset(0,\infty)\) such that
	\(\operatorname{supp}\psi\subset K\). Given \(\varepsilon>0\), we choose
	\(\varphi_{k_m}\in C_c^2((0,\infty))\), supported in \(K\), such that
	\[
	\|\varphi_{k_m}-\psi\|_{L^\infty(K)}
	\left(
	\sup_{n\ge1}\sup_{t\in[0,T]}\int_K \mathrm d\omega\, f_n(t,\omega)
	+
	\sup_{t\in[0,T]} f(t)(K)
	\right)
	<\varepsilon.
	\]
	Then, for every \(t\in[0,T]\), we can estimate
	\[
	\begin{aligned}
		\left|
		\int_0^\infty \mathrm d\omega\,\psi(\omega)f_n(t,\omega)
		-
		\int_0^\infty f(t,\mathrm d\omega)\,\psi(\omega)
		\right|
		&\le
		\left|
		\int_0^\infty \mathrm d\omega\,(\psi-\varphi_{k_m})(\omega)f_n(t,\omega)
		\right|
		\\
		&\quad+
		\left|
		\int_0^\infty \mathrm d\omega\,\varphi_{k_m}(\omega)f_n(t,\omega)
		-
		\int_0^\infty f(t,\mathrm d\omega)\,\varphi_{k_m}(\omega)
		\right|
		\\
		&\quad+
		\left|
		\int_0^\infty f(t,\mathrm d\omega)\,(\varphi_{k_m}-\psi)(\omega)
		\right|.
	\end{aligned}
	\]
	The first and third terms are bounded by \(\varepsilon\), uniformly in \(n\) and \(t\),
	while the middle term tends to \(0\) uniformly in \(t\), since
	\[
	F_n^{\varphi_{k_m}}\to F^{\varphi_{k_m}}
	\qquad\text{in }C([0,T])
	\qquad\text{as } n\to\infty.
	\]
	Hence, we obtain the convergence
	\[
	\sup_{t\in[0,T]}
	\left|
	\int_0^\infty \mathrm d\omega\,\psi(\omega)f_n(t,\omega)
	-
	\int_0^\infty f(t,\mathrm d\omega)\,\psi(\omega)
	\right|
	\to0
	\qquad\text{as } n\to\infty.
	\]
	Therefore,
	\[
	f_n\to f
	\qquad\text{in }C\bigl([0,T];w\text{-}\mathcal M_{\mathrm{loc}}((0,\infty))\bigr)
	\qquad\text{as } n\to\infty.
	\]
	
	Finally, for every \(\psi\in C_c((0,\infty))\), the map
	\[
	t\longmapsto \int_0^\infty f(t,\mathrm d\omega)\,\psi(\omega)
	\]
	is continuous on \([0,T]\), because it is the uniform limit of the continuous maps
	\[
	t\longmapsto \int_0^\infty \mathrm d\omega\,\psi(\omega)f_n(t,\omega).
	\]
	Thus, we have
	\[
	f\in C\bigl([0,T];w\text{-}\mathcal M_{\mathrm{loc}}((0,\infty))\bigr).
	\]
	This completes the proof.
\end{proof}

\subsection{Passing to the limit - Case A: $U(\omega)=\omega$}\label{Sec:CaseA}

\begin{proposition}\label{Prop:Existence:U=x:Hphi}
	Assume that \(U(\omega)=\omega\), and let \(f_0\in L^1([0,\infty))\) be nonnegative and compactly supported in \((0,\infty)\). Assume moreover that there exists a convex function
	\[
	e:[0,\infty)\to[0,\infty)
	\]
	such that
	\[
	e\in C^\infty[0,\infty),
	\qquad
	e(0)=0,
	\qquad
	e'(0)=0,
	\qquad
	\lim_{r\to\infty}\frac{e(r)}{r}=+\infty,
	\qquad
	\int_{[0,\infty)} \mathrm d\omega\, \frac{e(\omega f_0(\omega))}{\omega}<\infty.
	\]

	For each \(N\in\mathbb N\), let \(f_N\) be the global solution of the cutoff problem \eqref{Cutoff2-new}, and define
	\[
	h_N(t,\omega):=f_N(t,\omega)\chi_{\{\omega\le N\}}.
	\]
	
	Then there exist a subsequence, still denoted by \(\{f_N\}_{N\ge N_0}\), and a nonnegative function
	\[
	f^{\mathrm{reg}}\in C\bigl([0,\infty);w\text{-}L^1_{\mathrm loc}((0,\infty))\bigr)
	\]
	such that the following hold.
	
	For every \(T>0\), every \(0<a<b<\infty\), and every \(\psi\in L^\infty((a,b))\),
	\[
	\int_{(a,b)} \mathrm d\omega\, \psi(\omega)\,h_N(t,\omega)
	\longrightarrow
	\int_{(a,b)} \mathrm d\omega\, \psi(\omega)\,f^{\mathrm{reg}}(t,\omega)
	\]
	uniformly in \(t\in[0,T]\) as \(N\to\infty\).
	
	Moreover, for every fixed \(t\ge0\), the family
	\[
	\bigl\{f_N(t,\omega)\,\mathrm d\omega\bigr\}_{N\ge N_0}
	\]
	is relatively compact for the narrow topology of \(\mathcal M([0,\infty))\). Every narrow cluster
	point \(\mu_t\) of this family is of the form
	\[
	\mu_t=\alpha(t)\delta_{\omega=0}+f^{\mathrm{reg}}(t,\omega)\,\mathrm d\omega
	\]
	for some \(\alpha(t)\ge0\), possibly depending on the chosen cluster point.
	
	If we define
	\[
	g(t,\omega):=\omega f^{\mathrm{reg}}(t,\omega),
	\qquad
	g_0(\omega):=\omega f_0(\omega),
	\]
	then, for every \(T>0\), every \(0<b<\infty\), and every \(\psi\in L^\infty((0,b))\),
	\[
	\int_{(0,b)} \mathrm d\omega\, \psi(\omega)\,g_N(t,\omega)
	\longrightarrow
	\int_{(0,b)} \mathrm d\omega\, \psi(\omega)\,g(t,\omega)
	\]
	uniformly in \(t\in[0,T]\) as \(N\to\infty\).
	
	Furthermore, for every \(\varphi\in C_c^2((0,\infty))\) and every \(t\ge0\),
	\begin{equation}\label{Prop:Existence:U=x:Hphi:weak}
		\int_{(0,\infty)} \mathrm d\omega\, g(t,\omega)\varphi(\omega)
		=
		\int_{(0,\infty)} \mathrm d\omega\, g_0(\omega)\varphi(\omega)
		+
		2\int_0^t \mathrm ds
		\int\!\!\!\int_{\{x>y>0\}} \mathrm dx\,\mathrm dy\,
		g(s,x)g(s,y)\,H_\varphi(x,y),
	\end{equation}
	where
	\[
	H_\varphi(x,y)
	:=
	(x+y)\varphi(x+y)
	+
	(x-y)\varphi(x-y)
	-
	2x\,\varphi(x)
	-
	2y\,\varphi(y).
	\]
	In particular,
	\[
	f(t,\omega)=f^{\mathrm{reg}}(t,\omega)
	\]
	is a weak solution of the non-cutoff equation in the  weak form \eqref{Prop:Existence:U=x:Hphi:weak}.
	
	Finally,
	\[
	\int_{[0,\infty)} \mathrm d\omega\,
	\frac{e(\omega f^{\mathrm{reg}}(t,\omega))}{\omega}
	\le
	\int_{[0,\infty)} \mathrm d\omega\,
	\frac{e(\omega f_0(\omega))}{\omega}
	\qquad\text{for all }t\ge0, 
	\]
and \begin{equation}\label{eq:dissipation-assumption:2}
	2	\int_0^\infty \mathrm{d}s
		\int_0^\infty \mathrm{d}\omega
		\int_0^\omega \mathrm{d}\omega_1\,
		U(\omega_1)U(\omega-\omega_1)\,f_N(s,\omega)\,f_N(s,\omega_1)\,
		\le
		\mathcal{E}_e[f_0].
	\end{equation}
\end{proposition}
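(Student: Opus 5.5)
The plan is to combine the uniform cut-off bounds of Proposition~\ref{Prop:cutoff:global} with the compactness Lemma~\ref{lem:compactness-local-measures}. The entropy then upgrades measure limits to $L^1_{\mathrm{loc}}$ limits, after which we pass to the limit in a symmetrized weak formulation written in the variable $g_N=\omega f_N$. Since $f_0$ is compactly supported in $(0,\infty)$, we fix $N_0$ with $f_{N,0}=f_0$ for $N\ge N_0$. All constants are then independent of $N$:
\begin{itemize}
\item the energy $\int \omega^2 f_N\,\mathrm d\omega=\int\omega^2 f_0$;
\item the mass bound $\int_{[0,N]} f_N\le \int f_0$;
\item the entropy bound $\int_{[0,N]} e(\omega f_N)/\omega\le \mathcal E_e[f_0]$;
\item the dissipation bound \eqref{eq:dissipation-assumption:1}.
\end{itemize}

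\emph{Uniform integrability.} For $[a,b]\Subset(0,\infty)$ and $N\ge b$, the entropy bound gives $\int_a^b e(\omega h_N)\,\mathrm d\omega\le b\,\mathcal E_e[f_0]$. Since $e(r)/r\to\infty$, de la Vall\'ee-Poussin yields uniform integrability of $\{\omega h_N(t)\}$ on $[a,b]$, uniformly in $t$. Because $\omega\ge a$ there, the same holds for $\{h_N(t)\}$.

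For $g_N$ on $(0,b)$ we need additional control near $0$. We use $\int_0^\delta \omega f_N\le \delta\int_0^\delta f_N\le \delta\|f_0\|_{L^1}$, which is uniform in $N$ and $t$. So $\{g_N(t)\}$ is uniformly integrable on $(0,b)$ as well. This is why the $g$-convergence statement holds on $(0,b)$ and not only on $(a,b)$.

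\emph{Time equicontinuity.} For $\varphi\in C_c^2((0,\infty))$ we test the cut-off equation with $\varphi$ (or with $\omega\varphi$, for $g_N$). Symmetrizing over the $\delta$-constraints gives $\frac{\mathrm d}{\mathrm dt}\int g_N\varphi=2\iint_{x>y} g_N(x)g_N(y)H^N_\varphi(x,y)$. Here $H^N_\varphi$ is $H_\varphi$ with cut-off indicators inserted, and it coincides with $H_\varphi$ once $N>2\sup\operatorname{supp}\varphi$ on the relevant region. Since $\varphi\in C^2$ with compact support, we have $|H_\varphi(x,y)|\lesssim \min(y,1)\cdot(\ldots)$. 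More precisely, $H_\varphi(x,y)=(x+y)\varphi(x+y)+(x-y)\varphi(x-y)-2x\varphi(x)-2y\varphi(y)$ has a second-difference structure in $y$ for small $y$, and it is bounded by $C(1+x)^{-\infty}$-type decay once $x$ exceeds the support, apart from the term $-2y\varphi(y)$.

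The right-hand side is therefore bounded by $C\|\varphi\|_{C^2}\bigl(\int g_N\bigr)\bigl(\int_{\mathrm{supp}}\ldots\bigr)$. We control $\int g_N(y)\,\mathrm dy$ over large $y$ by the energy $\int \omega g_N$ together with the mass bound. This gives a uniform Lipschitz bound in $t$.

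Lemma~\ref{lem:compactness-local-measures} then produces a subsequence converging in $C([0,T];w\text{-}\mathcal M_{\mathrm{loc}})$. Uniform integrability together with Dunford--Pettis identifies the limit as $f^{\mathrm{reg}}\in L^1_{\mathrm{loc}}$, and the convergence upgrades to testing against $\psi\in L^\infty$ uniformly in $t$. A diagonal argument in $T$ makes the subsequence global in time.

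\emph{Narrow limits and the atom.} The masses $\int f_N(t)$ are uniformly bounded. On $[N,2N]$ the energy gives $\int f_N\le N^{-2}\int\omega^2 f_0\to0$, and tightness at infinity follows from the energy in the same way. Prokhorov then gives narrow cluster points $\mu_t$. On $(0,\infty)$, any such $\mu_t$ must coincide with $f^{\mathrm{reg}}(t)\,\mathrm d\omega$, so the only possible remaining singular part is $\alpha(t)\delta_0$.

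\emph{Passage to the limit in the weak form.} We integrate the identity above in time and let $N\to\infty$. The quadratic term converges by splitting the $(x,y)$ domain into three pieces:
\begin{enumerate}
\item a compact region away from $y=0$, where strong-weak arguments for products apply. Weak $L^1$ convergence of $g_N(s,x)g_N(s,y)$ as a tensor product holds because $g_N\otimes g_N\rightharpoonup g\otimes g$ for tensor products of weakly convergent sequences;
\item small $y$, where $|H_\varphi|\lesssim y$ and $\int_0^\delta g_N\le\delta\|f_0\|$ make the contribution small;
\item large $x$ with $x-y$ outside the support, where $H_\varphi$ reduces to $-2y\varphi(y)$ and the tail $\int_{x>R}g_N\le R^{-1}\int\omega^2f_0$ is small.
\end{enumerate}
The $s$-integral is handled by dominated convergence, since the convergence is uniform in $s$.

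\emph{Main obstacle.} I expect this tail and near-zero control of $H_\varphi$ to be the main obstacle. In particular, the term $2y\varphi(y)\,g(x)$ for $x\to\infty$ needs $\int g_N$ to be tight at infinity, which comes only from energy.

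\emph{Entropy and dissipation bounds.} The entropy bound for $f^{\mathrm{reg}}$ follows from weak lower semicontinuity of the convex functional $\int_a^b e(\omega h)/\omega$, followed by monotone convergence in $a\downarrow0$, $b\uparrow\infty$. Inequality \eqref{eq:dissipation-assumption:2} is read off from \eqref{eq:dissipation-assumption:1} by letting $t\to\infty$.
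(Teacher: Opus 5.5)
Apart from the final claim, your plan is essentially the paper's proof. It uses the same $N$-uniform bounds, de la Vall\'ee-Poussin and Dunford--Pettis for $h_N$ on $(a,b)$ and for $g_N$ on $(0,b)$, a Lipschitz bound from the symmetrized identity, Lemma~\ref{lem:compactness-local-measures}, Prokhorov for the atom at $0$, and lower semicontinuity for the entropy.

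Two simplifications are worth noting. First, with $g_N:=\omega h_N$, every cut-off in $\mathcal Q_N$ becomes $\chi_{\{x\le N\}}\chi_{\{y\le N\}}$ after the change of variables and is absorbed into $g_N$. So the identity holds exactly with $H_\varphi$, and no $H^N_\varphi$ is needed. Second, $|H_\varphi|\le 6\sup_z|z\varphi(z)|$, so the Lipschitz bound is simply $C(\int g_N)^2$ and needs no second-difference or decay analysis. Your decay claim is in fact inaccurate, since $(x-y)\varphi(x-y)\neq0$ for arbitrarily large $x$.

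Your domain splitting for the quadratic term is a valid alternative. The paper instead uses the tightness $\int_R^\infty g_N\le R^{-1}\int\omega^2 f_0$ to get $\nu_N\otimes\nu_N\rightharpoonup\nu\otimes\nu$ narrowly on $[0,\infty)^2$, and tests against the bounded continuous symmetrized kernel $\widetilde H_\varphi$. In your piece (3) you must also cover the strip where $x-y\in\operatorname{supp}\varphi$, but the same tail bound controls it.

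\textbf{The genuine gap is the final inequality.} Inequality \eqref{eq:dissipation-assumption:1} carries the weight $e'[Uh_N](\omega)$. Since $e'(0)=0$, this weight has no positive lower bound. Letting $t\to\infty$ there therefore controls only the $e'$-weighted dissipation, not the unweighted quantity in \eqref{eq:dissipation-assumption:2}.

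The missing idea is to apply Lemma~\ref{Lemma:EntropyTestCutoff} with a \emph{different} entropy, the linear one $\ell(r)=r$. That lemma only needs $C^1$, convexity, $\ell(0)=0$ and $\ell\ge0$; it does not need $\ell'(0)=0$ or superlinearity. Here $\ell'\equiv1$ and $\mathcal E_\ell[f_0]=\|f_0\|_{L^1}$, which gives
$2\int_0^t\iint_{\omega_1<\omega}\omega_1(\omega-\omega_1)h_N(s,\omega)h_N(s,\omega_1)\le\|f_0\|_{L^1}$
uniformly in $t$ and $N\ge N_0$. This is what the paper proves, with right-hand side $\|f_0\|_{L^1}$.

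The displayed right-hand side $\mathcal E_e[f_0]$ cannot be correct for every admissible $e$. Replacing $e$ by $\lambda e$ preserves all hypotheses and rescales only the right-hand side. The left-hand side does not depend on $e$ and is positive when $f_0\not\equiv0$.

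Note also that Proposition~\ref{prop:equilibrium} uses this bound for $f^{\mathrm{reg}}$, not for $f_N$. The paper obtains that version by writing the integrand as $\kappa(x,y)g_N(x)g_N(y)$ with $\kappa=\frac{x-y}{x}\mathbf 1_{\{x>y\}}$, which is bounded, nonnegative and lower semicontinuous. It then applies the Portmanteau theorem to $\nu_N\otimes\nu_N\rightharpoonup\nu\otimes\nu$, followed by Fatou in time. Your proposal does not address this limit passage either.
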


\begin{proof}
	We divide the proof into eight steps.
	
	\medskip
	\noindent
	\textit{Step 1: Bounds for the approximating sequence.}
	
	Since \(f_0\) is compactly supported in \((0,\infty)\), there exists \(N_0\in\mathbb N\) such that
	\[
	f_{N,0}=f_0
	\qquad\text{for all }N\ge N_0.
	\]
	
	We define
	\[
	g_N(t,\omega):=\omega h_N(t,\omega)=\omega f_N(t,\omega)\chi_{\{\omega\le N\}},
	\]
	and fix \(N\ge N_0\). Since \(U(\omega)=\omega\), Lemma~\ref{Lemma:EnergyCutoff} yields
	\begin{equation}\label{Prop:Existence:U=x:Hphi:secondmoment}
		\int_{[0,\infty)} \mathrm d\omega\, \omega^2 f_N(t,\omega)
		=
		\int_{[0,\infty)} \mathrm d\omega\, \omega^2 f_0(\omega)
		=:E
		\qquad\text{for all }t\ge0.
	\end{equation}
	
	Next we apply Proposition~\ref{Prop:cutoff:global} and obtain
	\[
	\int_{[0,\infty)} \mathrm d\omega\, h_N(t,\omega)
	=
	\int_{[0,N]} \mathrm d\omega\, f_N(t,\omega)
	\le
	\int_{[0,\infty)} \mathrm d\omega\, f_0(\omega)
	=:M_0
	\qquad\text{for all }t\ge0.
	\]
	
	We now estimate the first moment of \(h_N\). Since
	\[
	\omega\le 1+\omega^2
	\qquad\text{for all }\omega\ge0,
	\]
	we have
	\[
	\int_{[0,\infty)} \mathrm d\omega\, \omega h_N(t,\omega)
	\le
	\int_{[0,\infty)} \mathrm d\omega\, h_N(t,\omega)
	+
	\int_{[0,\infty)} \mathrm d\omega\, \omega^2 h_N(t,\omega).
	\]
	Using the previous bound and \eqref{Prop:Existence:U=x:Hphi:secondmoment}, we conclude that
	\begin{equation}\label{Prop:Existence:U=x:Hphi:firstmoment}
		\int_{[0,\infty)} \mathrm d\omega\, \omega h_N(t,\omega)
		\le
		M_0+E
		=:M_1
		\qquad\text{for all }t\ge0.
	\end{equation}
	Equivalently,
	\begin{equation}\label{Prop:Existence:U=x:Hphi:g-mass}
		\int_{[0,\infty)} \mathrm d\omega\, g_N(t,\omega)
		\le M_1
		\qquad\text{for all }t\ge0.
	\end{equation}
	
	Finally, since \(g_N(\omega)=\omega h_N(\omega)\), we have
	\begin{equation}\label{Prop:Existence:U=x:Hphi:g-firstmoment}
		\int_{[0,\infty)} \mathrm d\omega\, \omega g_N(t,\omega)
		=
		\int_{[0,\infty)} \mathrm d\omega\, \omega^2 h_N(t,\omega)
		\le E
		\qquad\text{for all }t\ge0.
	\end{equation}
	
	\medskip
	\noindent
	\textit{Step 2: Local \(L^1\)-bound and local entropy bounds for \(h_N\) and \(g_N\).}
	
	We fix \(0<a<b<\infty\). Since \(h_N\ge0\), we conclude that
	\begin{equation}\label{Prop:Existence:U=x:Hphi:localL1}
		\sup_{N\ge N_0}\sup_{t\ge0}
		\int_{(a,b)} \mathrm d\omega\, h_N(t,\omega)
		\le M_0.
	\end{equation}
	
	We now use the entropy estimate. By Lemma~\ref{Lemma:EntropyTestCutoff}, we estimate
	\[
	\int_{[0,\infty)} \mathrm d\omega\,
	\frac{e(\omega h_N(t,\omega))}{\omega}
	=
	\int_{[0,N]} \mathrm d\omega\,
	\frac{e(\omega f_N(t,\omega))}{\omega}
	\le
	\int_{[0,\infty)} \mathrm d\omega\,
	\frac{e(\omega f_0(\omega))}{\omega}
	=:C_e
	\qquad\text{for all }t\ge0.
	\]
	
	We define
	\[
	\Phi_{a,b}(r):=\frac1b\,e(ar),\qquad r\ge0.
	\]
	Since \(e\) is convex and nonnegative, \(\Phi_{a,b}\) is convex and nonnegative.  Moreover,
	\[
	\lim_{r\to\infty}\frac{\Phi_{a,b}(r)}{r}
	=
	\frac{a}{b}\lim_{r\to\infty}\frac{e(ar)}{ar}
	=
	+\infty,
	\]
	so \(\Phi_{a,b}\) is superlinear at infinity.
	
Since \(e\) is convex and \(e'(0)=0\), we have \(e'\ge0\) on \([0,\infty)\);
therefore \(e\) is nondecreasing. Hence, for every \(\omega\in(a,b)\) and every \(r\ge0\), we have
	\[
	e(ar)\le e(\omega r),
	\qquad
	\frac1b\le \frac1\omega.
	\]
	
	Therefore, we can bound
	\[
	\Phi_{a,b}(h_N(t,\omega))
	=
	\frac1b\,e(a h_N(t,\omega))
	\le
	\frac1\omega\,e(\omega h_N(t,\omega)),
	\qquad \omega\in(a,b).
	\]
	Integrating over \((a,b)\), we obtain
	\[
	\int_{(a,b)} \mathrm d\omega\, \Phi_{a,b}(h_N(t,\omega))
	\le
	\int_{[0,\infty)} \mathrm d\omega\, \frac{e(\omega h_N(t,\omega))}{\omega}
	\le C_e.
	\]
	Thus
	\begin{equation}\label{Prop:Existence:U=x:Hphi:dVP}
		\sup_{N\ge N_0}\sup_{t\ge0}
		\int_{(a,b)} \mathrm d\omega\, \Phi_{a,b}(h_N(t,\omega))
		\le C_e.
	\end{equation}
	By the de la Vall\'ee--Poussin criterion \cite{poussin1915integrale} and \cite[Theorem 2, Section 1.2]{RaoRen1991}, the family
	\[
	\bigl\{h_N(t,\cdot):\, N\ge N_0,\ t\ge0\bigr\}
	\]
	is uniformly integrable on \((a,b)\).
	
	We next establish the corresponding bound for \(g_N\) on intervals of the form \((0,b)\). We define
	\[
	\Psi_b(r):=\frac1b\,e(r),\qquad r\ge0.
	\]
	Since \(e\) is convex, nonnegative, and superlinear, the same properties hold for \(\Psi_b\). Moreover, for every \(\omega\in(0,b)\) and every \(r\ge0\),
	\[
	\Psi_b(r)=\frac1b\,e(r)\le \frac1\omega\,e(r).
	\]
	Applying this with \(r=g_N(t,\omega)=\omega h_N(t,\omega)\), we obtain
	\[
	\Psi_b(g_N(t,\omega))
	\le
	\frac1\omega\,e(\omega h_N(t,\omega)),
	\qquad \omega\in(0,b).
	\]
	Hence we obtain
	\[
	\int_{(0,b)} \mathrm d\omega\, \Psi_b(g_N(t,\omega))
	\le
	\int_{[0,\infty)} \mathrm d\omega\, \frac{e(\omega h_N(t,\omega))}{\omega}
	\le C_e.
	\]
	Together with \eqref{Prop:Existence:U=x:Hphi:g-mass}, this implies that the family
	\begin{equation}\label{Prop:Existence:U=x:Hphi:g-dVP}
		\bigl\{g_N(t,\cdot):\, N\ge N_0,\ t\ge0\bigr\}
		\qquad\text{is uniformly integrable on }(0,b).
	\end{equation}
	
	\medskip
	\noindent
	\textit{Step 3: Weak formulation for \(g_N\).}
	
	Let \(\varphi\in C_c^2((0,\infty))\). Since \(\varphi\) has compact support in \((0,\infty)\), there exists \(b_\varphi>0\) such that
	\[
	\operatorname{supp}\varphi\subset (0,b_\varphi).
	\]
	We fix \(N\ge b_\varphi\) and define
	\[
	F_N^\varphi(t):=
	\int_{(0,\infty)} \mathrm d\omega\, \varphi(\omega)g_N(t,\omega).
	\]
	Since \(g_N(t,\omega)=\omega f_N(t,\omega)\) on \((0,N)\), we may write
	\[
	F_N^\varphi(t)
	=
	\int_{(0,\infty)} \mathrm d\omega\, \phi(\omega)f_N(t,\omega),
	\qquad
	\phi(\omega):=\omega\varphi(\omega).
	\]
	Differentiating in time and using \eqref{Cutoff2-new}, we obtain
	\[
	\frac{\mathrm d}{\mathrm dt}F_N^\varphi(t)
	=
	\int_{(0,\infty)} \mathrm d\omega\, \phi(\omega)\mathcal Q_N[f_N](t,\omega).
	\]
	
	We now compute the right-hand side using the full cutoff operator. We write
	\[
	\int_{(0,\infty)} \mathrm d\omega\,  \phi\,\mathcal Q_N[f_N]
	=
	A_1-A_2-A_3-2B_1+2B_2+2B_3,
	\]
	where
	\[
	\begin{aligned}
		A_1
		&:=
		\int_0^\infty \mathrm d\omega\, \phi(\omega)
		\int_0^\omega \mathrm d\omega_1\,
		\omega_1(\omega-\omega_1)
		f_N(t,\omega_1)f_N(t,\omega-\omega_1)
		\chi_{\{\omega_1,\omega-\omega_1\le N\}},
		\\[1mm]
		A_2
		&:=
		\int_0^\infty \mathrm d\omega\, \phi(\omega)
		\int_0^\omega \mathrm d\omega_1\,
		\omega_1(\omega-\omega_1)
		f_N(t,\omega)f_N(t,\omega_1)
		\chi_{\{\omega,\omega_1\le N\}},
		\\[1mm]
		A_3
		&:=
		\int_0^\infty \mathrm d\omega\, \phi(\omega)
		\int_0^\omega \mathrm d\omega_1\,
		\omega_1(\omega-\omega_1)
		f_N(t,\omega)f_N(t,\omega-\omega_1)
		\chi_{\{\omega,\omega-\omega_1\le N\}},
	\end{aligned}
	\]
	and
	\[
	\begin{aligned}
		B_1
		&:=
		\int_0^\infty \mathrm d\omega\, \phi(\omega)
		\int_0^\infty \mathrm d\omega_1\,
		\omega_1(\omega+\omega_1)
		f_N(t,\omega_1)f_N(t,\omega)
		\chi_{\{\omega,\omega_1\le N\}},
		\\[1mm]
		B_2
		&:=
		\int_0^\infty \mathrm d\omega\, \phi(\omega)
		\int_0^\infty \mathrm d\omega_1\,
		\omega_1(\omega+\omega_1)
		f_N(t,\omega_1)f_N(t,\omega+\omega_1)
		\chi_{\{\omega_1,\omega+\omega_1\le N\}},
		\\[1mm]
		B_3
		&:=
		\int_0^\infty \mathrm d\omega\, \phi(\omega)
		\int_0^\infty \mathrm d\omega_1\,
		\omega_1(\omega+\omega_1)
		f_N(t,\omega)f_N(t,\omega+\omega_1)
		\chi_{\{\omega,\omega+\omega_1\le N\}}.
	\end{aligned}
	\]
	
	We now rewrite each term.
	
	First, for \(A_1\), we set
	\(
	x:=\omega-\omega_1, y:=\omega_1.
	\)
	Then \(\omega=x+y\), \(x>0\), \(y>0\), and the Jacobian equals \(1\). Hence
	\[
	A_1
	=
	\int\!\!\!\int_{(0,\infty)^2} \mathrm dx\,\mathrm dy\,
	xy\,f_N(t,x)f_N(t,y)\chi_{\{x,y\le N\}}\phi(x+y).
	\]
	Since \(\phi(x+y)=(x+y)\varphi(x+y)\), we obtain
	\[
	A_1
	=
	\int\!\!\!\int_{(0,\infty)^2} \mathrm dx\,\mathrm dy\,
	g_N(t,x)g_N(t,y)\,\varphi(x+y)(x+y).
	\]
	
	Next, for \(A_2\), we set
	\(
	x:=\omega, y:=\omega_1.
	\)
	Then \(x>y>0\), \(\omega-\omega_1=x-y\), and therefore
	\[
	A_2
	=
	\int\!\!\!\int_{x>y>0} \mathrm dx\,\mathrm dy\,
	y(x-y)f_N(t,x)f_N(t,y)\chi_{\{x,y\le N\}}\phi(x).
	\]
	Since \(\phi(x)=x\varphi(x)\), this becomes
	\[
	A_2
	=
	\int\!\!\!\int_{x>y>0} \mathrm dx\,\mathrm dy\,
	g_N(t,x)g_N(t,y)\,(x-y)\varphi(x).
	\]
	
	Similarly, for \(A_3\), we set
	\(
	x:=\omega, y:=\omega-\omega_1.
	\)
	Then \(x>y>0\), \(\omega_1=x-y\), and
	\[
	A_3
	=
	\int\!\!\!\int_{x>y>0} \mathrm dx\,\mathrm dy\,
	g_N(t,x)g_N(t,y)\,(x-y)\varphi(x).
	\]
	
	We next turn to \(B_1\). We split the region \((0,\infty)^2\) into \(\{\omega>\omega_1\}\) and \(\{\omega_1>\omega\}\).
	
	On \(\{\omega>\omega_1\}\), we set
	\(
	x:=\omega, y:=\omega_1.
	\)
	Then \(x>y>0\), and on this domain, we find
	\[
	\int\!\!\!\int_{x>y>0} \mathrm dx\,\mathrm dy\,
	g_N(t,x)g_N(t,y)\,(x+y)\varphi(x).
	\]
	
	On \(\{\omega_1>\omega\}\), we set
	\(
	x:=\omega_1, y:=\omega.
	\)
	Then \(x>y>0\), and on this region, we find
	\[
	\int\!\!\!\int_{x>y>0} \mathrm dx\,\mathrm dy\,
	g_N(t,x)g_N(t,y)\,(x+y)\varphi(y).
	\]
	Therefore, we find
	\[
	B_1
	=
	\int\!\!\!\int_{x>y>0} \mathrm dx\,\mathrm dy\,
	g_N(t,x)g_N(t,y)\,(x+y)\varphi(x)
	+
	\int\!\!\!\int_{x>y>0} \mathrm dx\,\mathrm dy\,
	g_N(t,x)g_N(t,y)\,(x+y)\varphi(y).
	\]
	
	For \(B_2\), we set
	\(
	x:=\omega+\omega_1, y:=\omega_1.
	\)
	Then \(x>y>0\), \(\omega=x-y\), and the Jacobian equals \(1\). Hence, we obtain
	\[
	B_2
	=
	\int\!\!\!\int_{x>y>0} \mathrm dx\,\mathrm dy\,
	yx\,f_N(t,y)f_N(t,x)\chi_{\{x,y\le N\}}\phi(x-y)
	=
	\int\!\!\!\int_{x>y>0} \mathrm dx\,\mathrm dy\,
	g_N(t,x)g_N(t,y)\,(x-y)\varphi(x-y).
	\]
	
	Finally, for \(B_3\), we set
	\[
	x:=\omega+\omega_1,\qquad y:=\omega.
	\]
	Then \(x>y>0\), \(\omega_1=x-y\), and we can write
	\[
	B_3
	=
	\int\!\!\!\int_{x>y>0} \mathrm dx\,\mathrm dy\,
	(x-y)x\,f_N(t,y)f_N(t,x)\chi_{\{x,y\le N\}}\phi(y)
	=
	\int\!\!\!\int_{x>y>0} \mathrm dx\,\mathrm dy\,
	g_N(t,x)g_N(t,y)\,(x-y)\varphi(y).
	\]
	
	Substituting all rewritten terms into
	\[
	\frac{\mathrm d}{\mathrm dt}F_N^\varphi(t)=A_1-A_2-A_3-2B_1+2B_2+2B_3,
	\]
	we obtain
	\[
	\begin{aligned}
		\frac{\mathrm d}{\mathrm dt}F_N^\varphi(t)
		&=
		2\int\!\!\!\int_{x>y>0} \mathrm dx\,\mathrm dy\,
		g_N(t,x)g_N(t,y)
		\Bigl[
		(x+y)\varphi(x+y)
		-(x-y)\varphi(x)
		\\
		&\qquad\qquad\qquad\qquad
		-(x+y)\varphi(x)
		-(x+y)\varphi(y)
		+(x-y)\varphi(x-y)
		+(x-y)\varphi(y)
		\Bigr].
	\end{aligned}
	\]
	We now simplify the expression in brackets. The terms multiplying \(\varphi(x)\) combine as
	\[
	-(x-y)-(x+y)=-2x,
	\]
	whereas the terms multiplying \(\varphi(y)\) combine as
	\[
	-(x+y)+(x-y)=-2y.
	\]
	Therefore, we can write
	\[
	\frac{\mathrm d}{\mathrm dt}F_N^\varphi(t)
	=
	2\int\!\!\!\int_{x>y>0} \mathrm dx\,\mathrm dy\,
	g_N(t,x)g_N(t,y)\,H_\varphi(x,y),
	\]
	with
	\[
	H_\varphi(x,y)
	=
	(x+y)\varphi(x+y)
	+
	(x-y)\varphi(x-y)
	-
	2x\,\varphi(x)
	-
	2y\,\varphi(y).
	\]
	
	\medskip
	\noindent
	\textit{Step 4: Uniform boundedness and equicontinuity.}
	
	We let \(\psi\in C_c^2((0,\infty))\) and choose \(0<a<b<\infty\) such that
	\[
	\operatorname{supp}\psi\subset(a,b),
	\]
	and then we define
	\[
	\varphi(\omega):=\frac{\psi(\omega)}{\omega}.
	\]
	Then \(\varphi\in C_c^2((0,\infty))\), \(\operatorname{supp}\varphi\subset(a,b)\), and
	\[
	G_N^\psi(t):=
	\int_{(0,\infty)} \mathrm d\omega\, \psi(\omega)h_N(t,\omega)
	=
	\int_{(0,\infty)} \mathrm d\omega\, \varphi(\omega)g_N(t,\omega).
	\]
	Hence, by Step~3, we have
	\begin{equation}\label{Prop:Existence:U=x:Hphi:dG}
		\frac{\mathrm d}{\mathrm dt}G_N^\psi(t)
		=
		2\int\!\!\!\int_{x>y>0} \mathrm dx\,\mathrm dy\,
		g_N(t,x)g_N(t,y)\,H_\varphi(x,y).
	\end{equation}
	
	We now estimate the right-hand side directly. Since \(\operatorname{supp}\varphi\subset(a,b)\), we examine separately the four terms in
	\[
	H_\varphi(x,y)
	=
	(x+y)\varphi(x+y)
	+
	(x-y)\varphi(x-y)
	-
	2x\varphi(x)
	-
	2y\varphi(y).
	\]
	
	First, if \(\varphi(x+y)\neq0\), then \(x+y\in(a,b)\), hence \(0<x<b\) and \(0<y<b\). Therefore, we can bound
	\[
	\begin{aligned}
		&\int\!\!\!\int_{x>y>0} \mathrm dx\,\mathrm dy\,
		g_N(t,x)g_N(t,y)\,(x+y)|\varphi(x+y)|
		\\
		&\qquad\le
		b\|\varphi\|_{L^\infty}
		\int\!\!\!\int_{(0,b)^2} \mathrm dx\,\mathrm dy\,
		g_N(t,x)g_N(t,y)
		\le
		b\|\varphi\|_{L^\infty}M_1^2.
	\end{aligned}
	\]
	
	Next, if \(\varphi(x-y)\neq0\), then \(x-y\in(a,b)\), so \(x=y+z\) with \(z\in(a,b)\). Hence
	\[
	\begin{aligned}
		&\int\!\!\!\int_{x>y>0} \mathrm dx\,\mathrm dy\,
		g_N(t,x)g_N(t,y)\,(x-y)|\varphi(x-y)|
		\\
		&\qquad=
		\int_0^\infty \mathrm dy\, g_N(t,y)
		\int_a^b \mathrm dz\, g_N(t,y+z)\,z|\varphi(z)|.
	\end{aligned}
	\]
	Since \(y+z\ge a\), we have
	\[
	\int_a^b \mathrm dz\, g_N(t,y+z)\,z|\varphi(z)|
	\le
	b\|\varphi\|_{L^\infty}\int_{y+a}^{y+b} \mathrm dx\, g_N(t,x).
	\]
	Therefore
	\[
	\begin{aligned}
		&\int\!\!\!\int_{x>y>0} \mathrm dx\,\mathrm dy\,
		g_N(t,x)g_N(t,y)\,(x-y)|\varphi(x-y)|
		\\
		&\qquad\le
		b\|\varphi\|_{L^\infty}M_1
		\int_0^\infty \mathrm dy\, g_N(t,y)
		\le
		b\|\varphi\|_{L^\infty}M_1^2.
	\end{aligned}
	\]
	
	We now consider the term \(2x\varphi(x)\). If \(\varphi(x)\neq0\), then \(x\in(a,b)\) and \(0<y<x<b\). Hence, we can bound
	\[
	\begin{aligned}
		&\int\!\!\!\int_{x>y>0} \mathrm dx\,\mathrm dy\,
		g_N(t,x)g_N(t,y)\,2x|\varphi(x)|
		\\
		&\qquad\le
		2b\|\varphi\|_{L^\infty}
		\int_a^b \mathrm dx\, g_N(t,x)
		\int_0^x \mathrm dy\, g_N(t,y)
		\le
		2b\|\varphi\|_{L^\infty}M_1^2.
	\end{aligned}
	\]
	
	Finally, if \(\varphi(y)\neq0\), then \(y\in(a,b)\). Thus, we can estimate
	\[
	\begin{aligned}
		&\int\!\!\!\int_{x>y>0} \mathrm dx\,\mathrm dy\,
		g_N(t,x)g_N(t,y)\,2y|\varphi(y)|
		\\
		&\qquad\le
		2b\|\varphi\|_{L^\infty}
		\int_a^b \mathrm dy\, g_N(t,y)
		\int_y^\infty \mathrm dx\, g_N(t,x)
		\le
		2b\|\varphi\|_{L^\infty}M_1^2.
	\end{aligned}
	\]
	
	Combining the four bounds, we find a constant \(C_\varphi>0\), depending only on \(a\), \(b\), \(\varphi\), \(M_1\), such that
	\[
	\left|
	\int\!\!\!\int_{x>y>0} \mathrm dx\,\mathrm dy\,
	g_N(t,x)g_N(t,y)\,H_\varphi(x,y)
	\right|
	\le C_\varphi
	\qquad\text{for all }N\ge N_0,\ t\ge0.
	\]
	In view of \eqref{Prop:Existence:U=x:Hphi:dG}, this yields
	\[
	\left|\frac{\mathrm d}{\mathrm dt}G_N^\psi(t)\right|
	\le 2C_\varphi
	\qquad\text{for all }N\ge N_0,\ t\ge0.
	\]
	Hence \(\{G_N^\psi\}_{N\ge N_0}\) is uniformly Lipschitz on every bounded interval \([0,T]\).
	
	Also, by \eqref{Prop:Existence:U=x:Hphi:localL1}, we find
	\[
	|G_N^\psi(t)|
	\le
	\|\psi\|_{L^\infty}\int_{(a,b)} \mathrm d\omega\, h_N(t,\omega)
	\le
	\|\psi\|_{L^\infty}M_0.
	\]
	Therefore \(\{G_N^\psi\}_{N\ge N_0}\) is uniformly bounded and uniformly equicontinuous on every interval \([0,T]\).
	
\medskip
\noindent
\textit{Step 5: Weak convergence of \(h_N\) and \(g_N\).}

Fix \(T>0\). By \eqref{Prop:Existence:U=x:Hphi:localL1}, for every compact interval
\((a,b)\Subset(0,\infty)\), we have
\[
\sup_{N\ge N_0}\sup_{t\in[0,T]}
\int_{(a,b)} \mathrm d\omega\, h_N(t,\omega)<\infty.
\]
Moreover, by Step~4, for every \(\psi\in C_c^2((0,\infty))\), the functions
\[
t\longmapsto \int_0^\infty \mathrm d\omega\, \psi(\omega)h_N(t,\omega)
\]
are uniformly bounded and uniformly equicontinuous on \([0,T]\).

Therefore Lemma~\ref{lem:compactness-local-measures} applies. Hence there exist a subsequence,
still denoted by \(\{h_N\}\), and a nonnegative map
\[
f^{\mathrm{reg}}\in C\bigl([0,T];w\text{-}\mathcal M_{\mathrm loc}((0,\infty))\bigr)
\]
such that, for every \(\psi\in C_c((0,\infty))\), we have
\[
\int_0^\infty \mathrm d\omega\, \psi(\omega)h_N(t,\omega)
\longrightarrow
\int_0^\infty f^{\mathrm{reg}}(t,\mathrm d\omega)\,\psi(\omega)
\qquad\text{uniformly in }t\in[0,T]
\]
as \(N\to\infty\).

We now show that \(f^{\mathrm{reg}}(t,\cdot)\in L^1_{\mathrm loc}((0,\infty))\), and that the convergence is
actually weak convergence in \(L^1_{\mathrm loc}\).

To this end, we fix \(0<a<b<\infty\), and set
\[
\mathcal F_{a,b,T}:=\bigl\{h_N(t,\cdot):\,N\ge N_0,\ t\in[0,T]\bigr\}\subset L^1((a,b)).
\]
By \eqref{Prop:Existence:U=x:Hphi:localL1}, the family \(\mathcal F_{a,b,T}\) is bounded in
\(L^1((a,b))\).
Moreover, by \eqref{Prop:Existence:U=x:Hphi:dVP}, the family \(\mathcal F_{a,b,T}\) is
uniformly integrable on \((a,b)\). Therefore, by the Dunford--Pettis theorem,
\(\mathcal F_{a,b,T}\) is relatively weakly compact in \(L^1((a,b))\).

We now fix  \(t\in[0,T]\). Since \(h_N(t,\cdot)\,\mathrm d\omega\) converges to
\(f^{\mathrm{reg}}(t,\mathrm d\omega)\) as a Radon measure on \((a,b)\), every subsequence of
\(\{h_N(t,\cdot)\}\) admits a further subsequence converging weakly in \(L^1((a,b))\)
to some \(\widetilde f_t\in L^1((a,b))\). But weak \(L^1\)-convergence implies convergence
against continuous test functions. Hence the measure limit must be
\(\widetilde f_t\,\mathrm d\omega\). By uniqueness of the Radon measure limit, we must have
\[
f^{\mathrm{reg}}(t,\mathrm d\omega)=\widetilde f_t(\omega)\,\mathrm d\omega
\qquad\text{on }(a,b).
\]
Therefore \(f^{\mathrm{reg}}(t,\cdot)\in L^1((a,b))\), and
\[
h_N(t,\cdot)\rightharpoonup f^{\mathrm{reg}}(t,\cdot)
\qquad\text{weakly in }L^1((a,b))
\qquad\text{as }N\to\infty.
\]
Since \((a,b)\Subset(0,\infty)\) is arbitrary, we obtain
\[
f^{\mathrm{reg}}(t,\cdot)\in L^1_{\mathrm loc}((0,\infty))
\qquad\text{for every }t\in[0,T].
\]

We also recall the corresponding local entropy bound for \(f^{\mathrm{reg}}\). Since
\(\Phi_{a,b}\), defined in Step 2, is convex, nonnegative, and superlinear, the weak lower
semicontinuity of convex integral functionals gives
\[
\int_{(a,b)} \mathrm d\omega\, \Phi_{a,b}(f^{\mathrm{reg}}(t,\omega))
\le
\liminf_{N\to\infty}
\int_{(a,b)} \mathrm d\omega\, \Phi_{a,b}(h_N(t,\omega))
\le C_e
\qquad\text{for every }t\in[0,T].
\]
In particular, the family
\[
\bigl\{f^{\mathrm{reg}}(t,\cdot):\,t\in[0,T]\bigr\}
\]
is uniformly integrable on \((a,b)\). Therefore
\[
f^{\mathrm{reg}}\in C\bigl([0,T];w\text{-}L^1_{\mathrm loc}((0,\infty))\bigr).
\]

We now upgrade the uniform convergence in time of $\{h_N\}$ from continuous test functions to arbitrary
\(L^\infty((a,b))\)-test functions. We claim that for every \(\psi\in L^\infty((a,b))\),
\[
\sup_{t\in[0,T]}
\left|
\int_{(a,b)} \mathrm d\omega\, \psi(\omega)\,h_N(t,\omega)
-
\int_{(a,b)} \mathrm d\omega\, \psi(\omega)\,f^{\mathrm{reg}}(t,\omega)
\right|\to0
\qquad\text{as }N\to\infty.
\]
Assume by contradiction that this fails. Then there exist \(\psi\in L^\infty((a,b))\),
\(\varepsilon>0\), a subsequence \(N_j\to\infty\), and times \(t_j\in[0,T]\) such that
\[
\left|
\int_{(a,b)} \mathrm d\omega\, \psi(\omega)\,h_{N_j}(t_j,\omega)
-
\int_{(a,b)} \mathrm d\omega\, \psi(\omega)\,f^{\mathrm{reg}}(t_j,\omega)
\right|
\ge \varepsilon
\qquad\text{for all }j\ge1.
\]
By the relative weak compactness of \(\mathcal F_{a,b,T}\), after extracting a subsequence, we may
assume that
\[
h_{N_j}(t_j,\cdot)\rightharpoonup g
\qquad\text{weakly in }L^1((a,b)).
\]
Moreover, by the uniform integrability of
\(\{f^{\mathrm{reg}}(t,\cdot):t\in[0,T]\}\) on \((a,b)\), after extracting a further subsequence, we may assume that
\[
f^{\mathrm{reg}}(t_j,\cdot)\rightharpoonup \widetilde g
\qquad\text{weakly in }L^1((a,b)).
\]
On the other hand, the already established uniform convergence in time against continuous
test functions yields, for every \(\varphi\in C([a,b])\),
\[
\int_{(a,b)} \mathrm d\omega\, \varphi(\omega)\,h_{N_j}(t_j,\omega)
-
\int_{(a,b)} \mathrm d\omega\, \varphi(\omega)\,f^{\mathrm{reg}}(t_j,\omega)
\to0
\qquad\text{as }j\to\infty.
\]
Here we have used the fact that every \(\varphi\in C([a,b])\) can be extended, after multiplication
by a cutoff, to a function in \(C_c((0,\infty))\). Passing to the limit gives
\[
\int_{(a,b)} \mathrm d\omega\, \varphi(\omega)\,g(\omega)
=
\int_{(a,b)} \mathrm d\omega\, \varphi(\omega)\,\widetilde g(\omega)
\qquad\text{for all }\varphi\in C([a,b]).
\]
Hence \(g=\widetilde g\) a.e. on \((a,b)\). Therefore,
\[
\int_{(a,b)} \mathrm d\omega\, \psi(\omega)\,h_{N_j}(t_j,\omega)
\to
\int_{(a,b)} \mathrm d\omega\, \psi(\omega)\,g(\omega)
\qquad\text{as }j\to\infty,
\]
and
\[
\int_{(a,b)} \mathrm d\omega\, \psi(\omega)\,f^{\mathrm{reg}}(t_j,\omega)
\to
\int_{(a,b)} \mathrm d\omega\, \psi(\omega)\,\widetilde g(\omega)
\qquad\text{as }j\to\infty.
\]
Since \(g=\widetilde g\) a.e. on \((a,b)\), the two limits coincide, which contradicts the
choice of \(\varepsilon\). This proves the claim.

We next identify the convergence of \(g_N\). We define
\[
g(t,\omega):=\omega f^{\mathrm{reg}}(t,\omega).
\]
We fix \(0<b<\infty\) and \(\psi\in L^\infty((0,b))\), and let \(\varepsilon>0\). We choose
\(0<\delta<b\) sufficiently small such that
\[
2\delta M_0\|\psi\|_{L^\infty}<\varepsilon.
\]
Then
\[
\left|
\int_{(0,b)} \mathrm d\omega\, \psi(\omega)\bigl(g_N(t,\omega)-g(t,\omega)\bigr)
\right|
\le I_{N,\delta}(t)+J_{N,\delta}(t),
\]
where
\[
I_{N,\delta}(t)
:=
\left|
\int_{(0,\delta)} \mathrm d\omega\, \psi(\omega)\bigl(g_N(t,\omega)-g(t,\omega)\bigr)
\right|,
\]
and
\[
J_{N,\delta}(t)
:=
\left|
\int_{(\delta,b)} \mathrm d\omega\, \psi(\omega)\bigl(g_N(t,\omega)-g(t,\omega)\bigr)
\right|.
\]

For the first term, using \(g_N=\omega h_N\) and \eqref{Prop:Existence:U=x:Hphi:localL1}, we obtain
\[
\int_{(0,\delta)} \mathrm d\omega\, g_N(t,\omega)
=
\int_{(0,\delta)} \mathrm d\omega\, \omega h_N(t,\omega)
\le
\delta\int_{(0,\delta)} \mathrm d\omega\, h_N(t,\omega)
\le
\delta M_0.
\]
Moreover, for every \(0<\eta<\delta\), the already proved convergence of \(h_N\) on
\((\eta,\delta)\) gives
\[
\int_{(\eta,\delta)} \mathrm d\omega\, \omega f^{\mathrm{reg}}(t,\omega)
=
\lim_{N\to\infty}
\int_{(\eta,\delta)} \mathrm d\omega\, \omega h_N(t,\omega)
\le
\delta M_0.
\]
Letting \(\eta\to0\), we obtain
\[
\int_{(0,\delta)} \mathrm d\omega\, g(t,\omega)
\le
\delta M_0.
\]
Therefore,
\[
I_{N,\delta}(t)\le 2\delta M_0\|\psi\|_{L^\infty}<\varepsilon
\qquad\text{for all }N,\ t.
\]

For the second term, since \(\omega\psi(\omega)\in L^\infty((\delta,b))\), the already proved
uniform convergence of \(h_N\) on \((\delta,b)\) gives
\[
J_{N,\delta}(t)\to0
\qquad\text{uniformly in }t\in[0,T]
\]
as \(N\to\infty\). Hence
\[
\sup_{t\in[0,T]}
\left|
\int_{(0,b)} \mathrm d\omega\, \psi(\omega)\,g_N(t,\omega)
-
\int_{(0,b)} \mathrm d\omega\, \psi(\omega)\,g(t,\omega)
\right|\to0
\qquad\text{as }N\to\infty.
\]
That is,
\[
g_N\to g
\qquad\text{in }C\bigl([0,T];w\text{-}L^1_{\mathrm loc}((0,\infty))\bigr)
\qquad\text{as }N\to\infty.
\]

We now identify the possible full measure limits of \(f_N\). We fix \(t\ge0\). By
\eqref{Prop:Existence:U=x:Hphi:secondmoment} and the mass bound, we have
\[
\sup_{N\ge N_0}\int_{[0,\infty)} \mathrm d\omega\, h_N(t,\omega)\le M_0,
\qquad
\sup_{N\ge N_0}\int_{[0,\infty)} \mathrm d\omega\, \omega^2 f_N(t,\omega)\le E.
\]
We  write
\[
\int_{[0,\infty)} \mathrm d\omega\, f_N(t,\omega)
=
\int_{[0,N]} \mathrm d\omega\, f_N(t,\omega)
+
\int_{(N,\infty)} \mathrm d\omega\, f_N(t,\omega).
\]
The first term is bounded by
\(M_0\) .
For the second term, using the second-moment estimate
\eqref{Prop:Existence:U=x:Hphi:secondmoment}, we obtain
\[
\int_{(N,\infty)} \mathrm d\omega\, f_N(t,\omega)
\le
\frac1{N^2}
\int_{(N,\infty)} \mathrm d\omega\, \omega^2 f_N(t,\omega)
\le
\frac{E}{N^2}
\le
\frac{E}{N_0^2},
\qquad N\ge N_0.
\]
Therefore, we can bound
\[
\sup_{N\ge N_0}
\int_{[0,\infty)} \mathrm d\omega\, f_N(t,\omega)
\le
M_0+\frac{E}{N_0^2}.
\]

Moreover, the same second-moment estimate gives tightness. For every \(R>0\), we can estimate
\[
\int_R^\infty \mathrm d\omega\, f_N(t,\omega)
\le
\frac1{R^2}
\int_R^\infty \mathrm d\omega\, \omega^2 f_N(t,\omega)
\le
\frac{E}{R^2}.
\]
Hence the family
\[
\bigl\{f_N(t,\omega)\,\mathrm d\omega\bigr\}_{N\ge N_0}
\]
is bounded in \(\mathcal M_+([0,\infty))\) and tight. By the Prokhorov theorem, every subsequence
admits a further subsequence, not relabeled, which converges narrowly in
\(\mathcal M([0,\infty))\) to a nonnegative measure \(\mu_t\), namely
\[
\lim_{N\to\infty}
\int_{[0,\infty)} \mathrm d\omega\, f_N(t,\omega)\phi(\omega)
=
\int_{[0,\infty)} \mu_t(\mathrm d\omega)\,\phi(\omega)
\qquad
\text{for all }\phi\in C_b([0,\infty)).
\]

For every \(0<a<b<\infty\), the already established weak convergence in \(L^1((a,b))\)
implies that
\[
\int_{(a,b)} \psi(\omega)\,\mu_t(\mathrm d\omega)
=
\int_{(a,b)} \mathrm d\omega\, \psi(\omega)\,f^{\mathrm{reg}}(t,\omega)
\qquad
\text{for all }\psi\in C_c((a,b)).
\]
Therefore, we obtain
\[
\mu_t(A)
=
\int_A \mathrm d\omega\, f^{\mathrm{reg}}(t,\omega)
\qquad
\text{for every Borel set }A\subset(a,b).
\]
Since \((a,b)\Subset(0,\infty)\) is arbitrary, the measure
\[
\mu_t-f^{\mathrm{reg}}(t,\omega)\,\mathrm d\omega
\]
is supported on \(\{0\}\). Therefore there exists a number \(\alpha(t)\ge0\), possibly
depending on the chosen  limit point in the narrow topology of \(\mathcal M([0,\infty))\), such that
\[
\mu_t
=
\alpha(t)\delta_{\omega=0}
+
f^{\mathrm{reg}}(t,\omega)\,\mathrm d\omega.
\]
Thus, for every fixed \(t\ge0\), every limit point in the narrow topology of
\[
\bigl\{f_N(t,\omega)\,\mathrm d\omega\bigr\}_{N\ge N_0}
\]
in \(\mathcal M([0,\infty))\) has the form
\[
\alpha(t)\delta_{\omega=0}
+
f^{\mathrm{reg}}(t,\omega)\,\mathrm d\omega.
\]

\medskip
\noindent
\textit{Step 6: Convergence of the measures \(\nu_N=g_N\,\mathrm d\omega\).}

We define
\[
g(t,\omega):=\omega f^{\mathrm{reg}}(t,\omega),
\]
and, for each \(t\ge0\),
\[
\nu_N(t):=g_N(t,\omega)\,\mathrm d\omega,
\qquad
\nu(t):=g(t,\omega)\,\mathrm d\omega.
\]

We claim that, for every fixed \(t\ge0\),
\[
\int_{[0,\infty)} \psi(\omega)\,\nu_N(t,\mathrm d\omega)
\longrightarrow
\int_{[0,\infty)} \psi(\omega)\,\nu(t,\mathrm d\omega)
\qquad\text{for every }\psi\in C_b([0,\infty)).
\]

Indeed, let \(\psi\in C_b([0,\infty))\). We fix \(\varepsilon>0\). Since
\[
\int_{[0,\infty)} \mathrm d\omega\, \omega g_N(t,\omega)\le E
\]
for all \(N\), we have
\[
\int_R^\infty \mathrm d\omega\, g_N(t,\omega)
\le
\frac1R\int_R^\infty \mathrm d\omega\, \omega g_N(t,\omega)
\le
\frac{E}{R}.
\]
Moreover, by the weak lower semicontinuity applied on finite intervals and then letting the
upper endpoint tend to infinity, the same bound holds for \(g(t,\cdot)\), namely
\[
\int_R^\infty \mathrm d\omega\, g(t,\omega)\le \frac{E}{R}.
\]
Hence we may choose \(R>0\) so large that
\[
\|\psi\|_{L^\infty}\frac{2E}{R}<\varepsilon.
\]

Let \(\Psi_R\in C_c([0,\infty))\) be such that
\[
0\le \Psi_R\le 1,
\qquad
\Psi_R\equiv1\quad\text{on }[0,R],
\qquad
\Psi_R\equiv0\quad\text{on }[R+1,\infty).
\]
Then
\[
\begin{aligned}
	\left|
	\int_{[0,\infty)} \psi(\omega)\,\nu_N(t,\mathrm d\omega)
	-
	\int_{[0,\infty)} \Psi_R(\omega)\psi(\omega)\,\nu_N(t,\mathrm d\omega)
	\right|
	&\le
	\|\psi\|_{L^\infty}\int_R^\infty \mathrm d\omega\, g_N(t,\omega)
	\\
	&\le
	\|\psi\|_{L^\infty}\frac{E}{R},
\end{aligned}
\]
and similarly
\[
\left|
\int_{[0,\infty)} \psi(\omega)\,\nu(t,\mathrm d\omega)
-
\int_{[0,\infty)} \Psi_R(\omega)\psi(\omega)\,\nu(t,\mathrm d\omega)
\right|
\le
\|\psi\|_{L^\infty}\frac{E}{R}.
\]

On the other hand, since \(\Psi_R\psi\in L^\infty((0,R+1))\),  
\[
\int_{[0,\infty)} \Psi_R(\omega)\psi(\omega)\,\nu_N(t,\mathrm d\omega)
=
\int_{(0,R+1)} \mathrm d\omega\, \Psi_R(\omega)\psi(\omega)g_N(t,\omega)
\]
and
\[
\int_{(0,R+1)} \mathrm d\omega\, \Psi_R(\omega)\psi(\omega)g_N(t,\omega)
\longrightarrow
\int_{(0,R+1)} \mathrm d\omega\, \Psi_R(\omega)\psi(\omega)g(t,\omega)
=
\int_{[0,\infty)} \Psi_R(\omega)\psi(\omega)\,\nu(t,\mathrm d\omega).
\]
Combining the three estimates and letting \(N\to\infty\), we conclude that
\[
\int_{[0,\infty)} \psi(\omega)\,\nu_N(t,\mathrm d\omega)
\longrightarrow
\int_{[0,\infty)} \psi(\omega)\,\nu(t,\mathrm d\omega) \qquad\text{as }N\to\infty.
\]
This proves the claim.

Thus, for every fixed \(t\ge0\), we have the convergence
\[
\nu_N(t)\rightharpoonup \nu(t)
\qquad\text{narrowly in }\mathcal M([0,\infty))
\qquad\text{as }N\to\infty.
\]

Consequently, for every fixed \(t\ge0\), we obtain the convergence
\[
\nu_N(t)\otimes\nu_N(t)
\rightharpoonup
\nu(t)\otimes\nu(t)
\qquad\text{narrowly in }\mathcal M([0,\infty)^2) \qquad\text{as }N\to\infty.
\]
Equivalently, for every \(\Phi\in C_b([0,\infty)^2)\), we have
\[
\int_{[0,\infty)^2} \Phi(x,y)\,\nu_N(t,\mathrm dx)\nu_N(t,\mathrm dy)
\longrightarrow
\int_{[0,\infty)^2} \Phi(x,y)\,\nu(t,\mathrm dx)\nu(t,\mathrm dy)
\qquad\text{as }N\to\infty.
\]

\medskip
\noindent
\textit{Step 7: Passage to the limit in the  weak formulation.}

We fix \(\varphi\in C_c^2((0,\infty))\), and choose \(0<a<b<\infty\) such that
\[
\operatorname{supp}\varphi\subset(a,b).
\]
For every \(N\) and every \(t\in[0,T]\), Step~3 yields
\begin{equation}\label{Prop:Existence:U=x:Hphi:weak-N}
	\int_{(0,\infty)} \mathrm d\omega\, g_N(t,\omega)\varphi(\omega)
	=
	\int_{(0,\infty)} \mathrm d\omega\, g_N(0,\omega)\varphi(\omega)
	+
	2\int_0^t \mathcal I_N(s)\,\mathrm ds,
\end{equation}
where
\[
\mathcal I_N(s):=
\int\!\!\!\int_{x>y>0} \mathrm dx\,\mathrm dy\,
g_N(s,x)g_N(s,y)H_\varphi(x,y).
\]

We now define the symmetrized kernel
\[
\widetilde H_\varphi(x,y)
:=
(x+y)\varphi(x+y)
+
|x-y|\varphi(|x-y|)
-
2x\,\varphi(x)
-
2y\,\varphi(y),
\qquad x,y\ge0.
\]
Since \(\varphi\in C_c^2((0,\infty))\), the function
\(\widetilde H_\varphi\) is bounded and continuous on \([0,\infty)^2\).
Moreover,
\[
\widetilde H_\varphi(x,y)=\widetilde H_\varphi(y,x)
\qquad\text{for all }x,y\ge0.
\]
We also observe that, for \(x>y>0\),
\[
\widetilde H_\varphi(x,y)=H_\varphi(x,y).
\]
Therefore, we can write
\[
2\mathcal I_N(s)
=
\int_{[0,\infty)^2} \widetilde H_\varphi(x,y)\,
\nu_N(s,\mathrm d x)\nu_N(s,\mathrm d y),
\]
where
\[
\nu_N(s):=g_N(s,\omega)\,\mathrm d\omega.
\]
Similarly, if we define
\[
\mathcal I(s):=
\int\!\!\!\int_{x>y>0} \mathrm dx\,\mathrm dy\,
g(s,x)g(s,y)H_\varphi(x,y),
\]
and
\[
\nu(s):=g(s,\omega)\,\mathrm d\omega,
\]
then
\[
2\mathcal I(s)
=
\int_{[0,\infty)^2} \widetilde H_\varphi(x,y)\,
\nu(s,\mathrm d x)\nu(s,\mathrm d y).
\]

Since \(\widetilde H_\varphi\in C_b([0,\infty)^2)\), Step~6 yields, for every fixed
\(s\in[0,T]\),
\[
\int_{[0,\infty)^2} \widetilde H_\varphi(x,y)\,
\nu_N(s,\mathrm d x)\nu_N(s,\mathrm d y)
\longrightarrow
\int_{[0,\infty)^2} \widetilde H_\varphi(x,y)\,
\nu(s,\mathrm d x)\nu(s,\mathrm d y)
\qquad\text{as }N\to\infty.
\]
Hence, we have the convergence
\[
\mathcal I_N(s)\longrightarrow \mathcal I(s)
\qquad\text{for every }s\in[0,T]
\qquad\text{as }N\to\infty.
\]

Moreover, we know that
\[
|\mathcal I_N(s)|\le C_\varphi
\qquad\text{for all }N\ge N_0,\ s\in[0,T].
\]
Therefore, by the dominated convergence theorem, we have
\[
\int_0^t \mathcal I_N(s)\,\mathrm ds
\longrightarrow
\int_0^t \mathcal I(s)\,\mathrm ds
\qquad\text{as }N\to\infty
\]
for every \(t\in[0,T]\).

Next we consider the left-hand side of
\eqref{Prop:Existence:U=x:Hphi:weak-N}. Since
\(\varphi\in C_c^2((0,\infty))\), we have
\(
\varphi\in C_b([0,\infty)).
\)
By the convergence of the measures
\[
\nu_N(t)=g_N(t,\omega)\,\mathrm d\omega
\rightharpoonup
\nu(t)=g(t,\omega)\,\mathrm d\omega
\qquad\text{narrowly in }\mathcal M([0,\infty)) \qquad\text{as }N\to\infty,
\]
obtained in Step~6, we get
\[
\int_{(0,\infty)} \mathrm d\omega\, g_N(t,\omega)\varphi(\omega)
=
\int_{[0,\infty)} \varphi(\omega)\,\nu_N(t,\mathrm d\omega)
\longrightarrow
\int_{[0,\infty)} \varphi(\omega)\,\nu(t,\mathrm d\omega)
=
\int_{(0,\infty)} \mathrm d\omega\, g(t,\omega)\varphi(\omega)
\]
as \(N\to\infty\).

We next consider the initial term. Since \(f_{N,0}=f_0\) for every \(N\ge N_0\), we have
\[
h_N(0,\omega)=f_0(\omega)\chi_{\{\omega\le N\}}=f_0(\omega),
\]
and therefore
\[
g_N(0,\omega)=\omega h_N(0,\omega)=\omega f_0(\omega)=g_0(\omega).
\]
Consequently,
\[
\int_{(0,\infty)} \mathrm d\omega\, g_N(0,\omega)\varphi(\omega)
=
\int_{(0,\infty)} \mathrm d\omega\, g_0(\omega)\varphi(\omega)
\qquad\text{for all }N\ge N_0.
\]

Passing to the limit in \eqref{Prop:Existence:U=x:Hphi:weak-N}, we obtain
\[
\int_{(0,\infty)} \mathrm d\omega\, g(t,\omega)\varphi(\omega)
=
\int_{(0,\infty)} \mathrm d\omega\, g_0(\omega)\varphi(\omega)
+
2\int_0^t \mathrm ds
\int\!\!\!\int_{x>y>0} \mathrm dx\,\mathrm dy\,
g(s,x)g(s,y)\,H_\varphi(x,y),
\]
which is exactly \eqref{Prop:Existence:U=x:Hphi:weak}.

Since
\[
g(t,\omega)=\omega f^{\mathrm{reg}}(t,\omega),
\]
this is precisely the symmetric weak formulation for the non-cutoff equation satisfied by
the regular part \(f^{\mathrm{reg}}\). In particular, the possible concentration of the full measure limit
of \(f_N(t,\omega)\,\mathrm d\omega\) at \(\omega=0\) does not contribute to this formulation,
as the equation is written in terms of \(g=\omega f^{\mathrm{reg}}\).

\medskip
\noindent
\textit{Step 8: Entropy bound for the regular part.}

For every \(N\) and every \(t\ge0\), we have, by Lemma \ref{Lemma:EntropyTestCutoff}
\begin{equation}\label{Prop:Existence:U=x:Hphi:entropy-N}
	\int_{[0,\infty)} \mathrm d\omega\, \frac{e(\omega h_N(t,\omega))}{\omega}
	\le
	\int_{[0,\infty)} \mathrm d\omega\, \frac{e(\omega f_0(\omega))}{\omega}.
\end{equation}

We fix \(0<a<b<\infty\). Since \(a\le\omega\le b\) on \((a,b)\), we define

\[
G_{a,b}(\omega,r):=\chi_{(a,b)}(\omega)\,\frac{e(\omega r)}{\omega},
\qquad \omega>0,\quad r\ge0.
\]
For each fixed \(r\ge0\), the map \(\omega\mapsto G_{a,b}(\omega,r)\) is measurable. For almost every fixed \(\omega\), the map \(r\mapsto G_{a,b}(\omega,r)\) is convex and lower semicontinuous on \([0,\infty)\), as it is either identically zero or a positive multiple of the convex map \(r\mapsto e(\omega r)\). Hence \(G_{a,b}\) is a nonnegative Carath\'eodory convex integrand. Since \(h_N(t,\cdot)\rightharpoonup f^{\mathrm{reg}}(t,\cdot)\) weakly in \(L^1((a,b))\), the lower semicontinuity theorem for integral functionals associated with nonnegative Carath\'eodory convex integrands yields
\[
\int_a^b \mathrm d\omega\, \frac{e(\omega f^{\mathrm{reg}}(t,\omega))}{\omega}
=
\int_{(0,\infty)} \mathrm d\omega\, G_{a,b}(\omega,f^{\mathrm{reg}}(t,\omega))
\le
\liminf_{N\to\infty}
\int_{(0,\infty)} \mathrm d\omega\, G_{a,b}(\omega,h_N(t,\omega)),
\]
see \cite[Theorem 3.20]{dacorogna2008direct} and \cite{ioffe1977lower}. That is,
\[
\int_a^b \mathrm d\omega\, \frac{e(\omega f^{\mathrm{reg}}(t,\omega))}{\omega}
\le
\liminf_{N\to\infty}
\int_a^b \mathrm d\omega\, \frac{e(\omega h_N(t,\omega))}{\omega}.
\]
Using \eqref{Prop:Existence:U=x:Hphi:entropy-N}, we infer
\[
\int_a^b \mathrm d\omega\, \frac{e(\omega f^{\mathrm{reg}}(t,\omega))}{\omega}
\le
\int_{[0,\infty)} \mathrm d\omega\, \frac{e(\omega f_0(\omega))}{\omega}.
\]

Finally, let \(a\downarrow0\) and \(b\uparrow\infty\). Since the integrand is nonnegative, the monotone convergence theorem gives
\[
\int_{[0,\infty)} \mathrm d\omega\, \frac{e(\omega f^{\mathrm{reg}}(t,\omega))}{\omega}
=
\lim_{a\downarrow0,\;b\uparrow\infty}
\int_a^b \mathrm d\omega\, \frac{e(\omega f^{\mathrm{reg}}(t,\omega))}{\omega}
\le
\int_{[0,\infty)} \mathrm d\omega\, \frac{e(\omega f_0(\omega))}{\omega}.
\]

\medskip
\noindent
\textit{Step 9: Dissipation bound.}

We finally prove the dissipation estimate. We apply
Lemma~\ref{Lemma:EntropyTestCutoff} with the linear entropy
\[
\ell(r):=r,\qquad r\ge 0.
\]
Then \(\ell\in C^1([0,\infty))\), \(\ell\) is convex, \(\ell(0)=0\), and
\(\ell\ge 0\). Moreover,
\[
\ell'(r)=1
\qquad\text{for all }r\ge 0.
\]
Therefore Lemma~\ref{Lemma:EntropyTestCutoff} gives, for every \(t>0\),
\[
2\int_0^t \mathrm{d}s
\int_0^\infty \mathrm{d}\omega
\int_0^\omega \mathrm{d}\omega_1\,
U(\omega_1)U(\omega-\omega_1)
h_N(s,\omega)h_N(s,\omega_1)
\le
\mathcal E_\ell[f_{N,0}].
\]
Since \(N\ge N_0\) implies \(f_{N,0}=f_0\), and since
\[
\mathcal E_\ell[f_0]
=
\int_{[0,\infty)} \mathrm d\omega\,
\frac{\ell(U(\omega)f_0(\omega))}{U(\omega)}
=
\int_{[0,\infty)} f_0(\omega)\,\mathrm d\omega
=:M_0,
\]
we obtain
\begin{equation}\label{eq:linear-dissipation-fN}
2\int_0^t \mathrm{d}s
\int_0^\infty \mathrm{d}\omega
\int_0^\omega \mathrm{d}\omega_1\,
U(\omega_1)U(\omega-\omega_1)
h_N(s,\omega)h_N(s,\omega_1)
\le M_0 .
\end{equation}

	When \(U(\omega)=\omega\),
we rewrite this estimate in terms of
\[
g_N(s,\omega):=\omega h_N(s,\omega).
\]
We set \(x:=\omega\) and \(y:=\omega_1\), then
\[
\omega_1(\omega-\omega_1)h_N(s,\omega)h_N(s,\omega_1)
=
\frac{x-y}{x}g_N(s,x)g_N(s,y),
\qquad x>y>0.
\]
Define
\[
\kappa(x,y)
:=
\begin{cases}
\dfrac{x-y}{x}, & x>y\ge 0,\ x>0,\\[2mm]
0, & \text{otherwise}.
\end{cases}
\]
Then \(0\le \kappa\le 1\), and \(\kappa\) is lower semicontinuous on
\([0,\infty)^2\). Since
\[
\nu_N(s):=g_N(s,\omega)\,\mathrm d\omega,
\]
then 
\begin{equation}\label{eq:kappa-bound-N}
2\int_0^t
\int_{[0,\infty)^2}
\kappa(x,y)\,
\nu_N(s,\mathrm dx)\nu_N(s,\mathrm dy)
\,\mathrm ds
\le M_0 .
\end{equation}

Since \(\kappa\) is nonnegative, bounded, and lower semicontinuous, the Portmanteau theorem gives
\[
\int_{[0,\infty)^2}
\kappa(x,y)\,\nu(s,\mathrm dx)\nu(s,\mathrm dy)
\le
\liminf_{N\to\infty}
\int_{[0,\infty)^2}
\kappa(x,y)\,\nu_N(s,\mathrm dx)\nu_N(s,\mathrm dy)
\]
for every fixed \(s\ge0\).

Using Fatou's lemma in time and then \eqref{eq:kappa-bound-N}, we obtain, for every \(t>0\),
\[
\begin{aligned}
&2\int_0^t
\int_{[0,\infty)^2}
\kappa(x,y)\,\nu(s,\mathrm dx)\nu(s,\mathrm dy)
\,\mathrm ds
\\
&\qquad\le
2\int_0^t
\liminf_{N\to\infty}
\int_{[0,\infty)^2}
\kappa(x,y)\,\nu_N(s,\mathrm dx)\nu_N(s,\mathrm dy)
\,\mathrm ds
\\
&\qquad\le
2\liminf_{N\to\infty}
\int_0^t
\int_{[0,\infty)^2}
\kappa(x,y)\,\nu_N(s,\mathrm dx)\nu_N(s,\mathrm dy)
\,\mathrm ds
\\
&\qquad\le M_0 .
\end{aligned}
\]
Since \(g(s,\omega)=\omega f^{\mathrm{reg}}(s,\omega)\), we have
\[
\int_{[0,\infty)^2}
\kappa(x,y)\,\nu(s,\mathrm dx)\nu(s,\mathrm dy)
=
\int_0^\infty \mathrm dx
\int_0^x \mathrm dy\,
y(x-y)
f^{\mathrm{reg}}(s,x)f^{\mathrm{reg}}(s,y).
\]
Therefore, for every \(t>0\),
\[
2\int_0^t \mathrm ds
\int_0^\infty \mathrm d\omega
\int_0^\omega \mathrm d\omega_1\,
\omega_1(\omega-\omega_1)
f^{\mathrm{reg}}(s,\omega)f^{\mathrm{reg}}(s,\omega_1)
\le M_0 .
\]
Finally, since the integrand is nonnegative, the monotone convergence theorem gives, by
letting \(t\to\infty\),
\begin{equation}\label{eq:dissipation-assumption-limit}
2\int_0^\infty \mathrm ds
\int_0^\infty \mathrm d\omega
\int_0^\omega \mathrm d\omega_1\,
\omega_1(\omega-\omega_1)
f^{\mathrm{reg}}(s,\omega)f^{\mathrm{reg}}(s,\omega_1)
\le
\int_0^\infty f_0(\omega)\,\mathrm d\omega .
\end{equation}

This completes the proof.
\end{proof}

\subsection{Passing to the limit - Case B: $U(\omega)=\omega^\rho(1+\omega)^{-\beta}$}\label{Sec:CaseB}
\begin{proposition}
	\label{prop:global-Ux2-Hphi}
	Let
	\[
	U(\omega)=\omega^\rho(1+\omega)^{-\beta},
	\qquad
	\rho-\beta<2,
	\qquad
	\rho\ge1,
	\qquad
	\beta\le \rho-1.
	\]
	Let \(f_0\in L^1((0,\infty))\) be nonnegative and compactly supported in \((0,\infty)\). Assume moreover that there exists a convex function
	\[
	e:[0,\infty)\to[0,\infty),
	\qquad
	e(0)=0,
	\qquad
	e'(0)=0,
	\qquad
	e\in C^\infty[0,\infty),
	\qquad
	\lim_{r\to\infty}\frac{e(r)}{r}=+\infty,
	\]
	 such that
	\[
	\int_{[0,\infty)} \mathrm d\omega\,
	\frac{e\!\bigl(\omega^\rho(1+\omega)^{-\beta} f_0(\omega)\bigr)}
	{\omega^\rho(1+\omega)^{-\beta}}
	<\infty.
	\]

	For each \(N\in\mathbb N\), let \(f_N\) be the global nonnegative solution of \eqref{Cutoff2-new}, and define
	\[
	h_N(t,\omega):=f_N(t,\omega)\chi_{\{\omega\le N\}}.
	\]
	
Then there exist a subsequence, still denoted by \(\{f_N\}_{N\ge1}\), and a nonnegative function
\[
f^{\mathrm{reg}}\in C\bigl([0,\infty);w\text{-}L^1_{\mathrm loc}((0,\infty))\bigr)
\]
such that the following hold.
	
	For every \(T>0\), every \(0<a<b<\infty\), and every \(\psi\in L^\infty((a,b))\),
	\[
	\int_a^b \mathrm d\omega\, \psi(\omega)\,h_N(t,\omega)
	\longrightarrow
	\int_a^b \mathrm d\omega\, \psi(\omega)\,f^{\mathrm{reg}}(t,\omega)
	\]
	uniformly in \(t\in[0,T]\) as \(N\to\infty\).
	
Moreover, for every fixed \(t\ge0\), the family
\[
\bigl\{f_N(t,\omega)\,\mathrm d\omega\bigr\}_{N\ge1}
\]
is relatively compact for the narrow topology of \(\mathcal M([0,\infty))\). Every narrow cluster
point \(\mu_t\) of this family has the form
\[
\mu_t=\alpha(t)\delta_{\omega=0}+f^{\mathrm{reg}}(t,\omega)\,\mathrm d\omega
\]
for some \(\alpha(t)\ge0\), possibly depending on the chosen cluster point.
	
	Defining
	\[
	g(t,\omega):=\omega^\rho(1+\omega)^{-\beta} f^{\mathrm{reg}}(t,\omega),
	\qquad
	g_0(\omega):=\omega^\rho(1+\omega)^{-\beta} f_0(\omega),
	\qquad
	g_N(t,\omega):=\omega^\rho(1+\omega)^{-\beta} h_N(t,\omega),
	\]
	we have, for every \(T>0\), every \(0<b<\infty\), and every \(\psi\in L^\infty((0,b))\),
	\[
	\int_0^b \mathrm d\omega\, \psi(\omega)\,g_N(t,\omega)
	\longrightarrow
	\int_0^b \mathrm d\omega\, \psi(\omega)\,g(t,\omega)
	\]
	uniformly in \(t\in[0,T]\) as \(N\to\infty\).
	
	Furthermore, for every \(\varphi\in C_c^2((0,\infty))\) and every \(t\ge0\),
	\begin{equation}\label{eq:weak-Hphi-final-Ux2}
		\int_0^\infty \mathrm d\omega\, g(t,\omega)\varphi(\omega)
		=
		\int_0^\infty \mathrm d\omega\, g_0(\omega)\varphi(\omega)
		+
		2\int_0^t \mathrm ds
		\int\!\!\!\int_{x>y>0} \mathrm dx\,\mathrm dy\,
		g(s,x)g(s,y)\,\mathcal H_\varphi(x,y),
	\end{equation}
	where
	\[
	\mathcal H_\varphi(x,y)
	=
	(x+y)^\rho(1+x+y)^{-\beta}\varphi(x+y)
	+
	(x-y)^\rho(1+x-y)^{-\beta}\varphi(x-y)
	\]
	\[
	\qquad
	-
	\Bigl((x-y)^\rho(1+x-y)^{-\beta}+(x+y)^\rho(1+x+y)^{-\beta}\Bigr)\varphi(x)
	\]
	\[
	\qquad
	+
	\Bigl((x-y)^\rho(1+x-y)^{-\beta}-(x+y)^\rho(1+x+y)^{-\beta}\Bigr)\varphi(y).
	\]
	
	In particular, \eqref{eq:weak-Hphi-final-Ux2} is a weak form of \eqref{3wave} for
	\[
	f(t,\omega)=f^{\mathrm{reg}}(t,\omega).
	\]
	
	Finally, 
	\[
	\int_{[0,\infty)} \mathrm d\omega\,
	\frac{e\!\bigl(\omega^\rho(1+\omega)^{-\beta} f^{\mathrm{reg}}(t,\omega)\bigr)}
	{\omega^\rho(1+\omega)^{-\beta}}
	\le
	\int_{[0,\infty)} \mathrm d\omega\,
	\frac{e\!\bigl(\omega^\rho(1+\omega)^{-\beta} f_0(\omega)\bigr)}
	{\omega^\rho(1+\omega)^{-\beta}}
	\qquad\text{for all }t\ge0,
	\]
	and  \begin{equation}\label{eq:dissipation-assumption:3}
	2	\int_0^\infty \mathrm{d}s
		\int_0^\infty \mathrm{d}\omega
		\int_0^\omega \mathrm{d}\omega_1\,
		U(\omega_1)U(\omega-\omega_1)\,f_N(s,\omega)\,f_N(s,\omega_1)\,
		\le
		\mathcal{E}_e[f_0].
	\end{equation}

\end{proposition}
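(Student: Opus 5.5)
The proof will follow the eight/nine-step scheme of Proposition~\ref{Prop:Existence:U=x:Hphi}, with \(\omega\) replaced by \(U(\omega)=\omega^\rho(1+\omega)^{-\beta}\). The main changes are in the moment bounds and the growth of the kernel. Lemma~\ref{lemmeentro} shows that \(U\) satisfies \eqref{UCondition}. Moreover \(U\) is bounded and positive on \([N,2N]\) and \(U(0)=0\), so Proposition~\ref{Prop:cutoff:global} gives global nonnegative solutions \(f_N\). These satisfy
\[
\int_{[0,N]} f_N(t)\le M_0:=\int f_0,
\qquad
\int \omega U(\omega) f_N(t)=E,
\qquad
\int_{[0,N]}\frac{e(Uf_N)}{U}\le \mathcal E_e[f_0].
\]
Since \(f_0\) has compact support in \((0,\infty)\), there is \(N_0\) with \(f_{N,0}=f_0\) for \(N\ge N_0\).

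From these bounds I derive the two integrability facts used in Case~A.

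\textbf{Mass of \(g_N=Uh_N\).} Near \(0\) we have \(U(\omega)\le C\omega\) for \(\omega\le 1\), since \(\rho\ge1\). For \(\omega\ge1\), \(U\le \omega U\). Hence
\[
\int g_N\le C M_0+E=:M_1 .
\]
Also \(\int \omega g_N\le E\), and \(\int_{(0,\delta)}g_N\le C\delta M_0\), which is the smallness needed near the origin.

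\textbf{Uniform integrability.} \(U\) is increasing by Lemma~\ref{lemmeentro}. On \((a,b)\) I set \(\Phi_{a,b}(r)=e(U(a)r)/U(b)\). This function is convex and superlinear, and since \(e\) is nondecreasing it satisfies \(\Phi_{a,b}(h_N)\le e(Uh_N)/U\). Similarly \(\Psi_b(r)=e(r)/U(b)\) controls \(g_N\) on \((0,b)\). De la Vall\'ee--Poussin then gives uniform integrability of \(h_N\) on \((a,b)\) and of \(g_N\) on \((0,b)\).

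\textbf{Weak form for the cutoff problem.} For \(\varphi\in C_c^2\) with support in \((0,b_\varphi)\) and \(N\ge 2b_\varphi\), I test \eqref{Cutoff2-new} with \(\phi=U\varphi\). I decompose into \(A_1,A_2,A_3,B_1,B_2,B_3\) and apply the same changes of variables as in Step~3 of Proposition~\ref{Prop:Existence:U=x:Hphi}. This gives
\[
\frac{\mathrm d}{\mathrm dt}\int g_N\varphi=2\iint_{x>y>0} g_N(x)g_N(y)\,\mathcal H_\varphi(x,y).
\]
The coefficients of \(\varphi(x)\) and \(\varphi(y)\) are now \(-(U(x-y)+U(x+y))\) and \(U(x-y)-U(x+y)\), because the weights no longer collapse to \(2x\) and \(2y\). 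I must also check that the cutoff indicators become \(\chi_{\{x,y\le N\}}\), which is absorbed into \(g_N=Uh_N\).

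\textbf{Bounding the right-hand side.} Each term of \(\mathcal H_\varphi\) is estimated as in Step~4. Terms containing \(\varphi(x+y)\), \(\varphi(x)\) or \(\varphi(x-y)\) carry a bounded weight whenever the relevant argument lies in \((a,b)\). The one delicate term is \(\varphi(y)U(x+y)\) with \(x\) unbounded. There I use
\[
U(x+y)\le C(1+x)^{\rho-\beta},\qquad \rho-\beta<2,
\]
so this term is controlled by \(\int (1+x)^{\rho-\beta-1}g_N(x)\,dx\). This in turn is bounded by \(\int(1+\omega)U f_N\), which is at most \(C(M_0+E)\) once \(\rho-\beta-1<1\). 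The same growth bound \(|\mathcal H_\varphi(x,y)|\lesssim 1+x\) for \(y\in\operatorname{supp}\varphi\) lets me use the first-moment tightness \(\int_R^\infty g_N\le E/R\) when passing to the limit.

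\textbf{Limit passage, main obstacle.} Lemma~\ref{lem:compactness-local-measures} and Dunford--Pettis give \(f^{\mathrm{reg}}\) together with the uniform-in-time \(L^\infty\)-tested convergences. Prokhorov's theorem, using tightness from the energy bound \(\int_R^\infty f_N\le E/(RU(R))\), gives the decomposition with an atom at \(0\). The difficulty is that \(\widetilde{\mathcal H}_\varphi\), the symmetrized kernel, is no longer bounded on \([0,\infty)^2\): it grows linearly in the large variable. So I cannot simply invoke narrow convergence of \(\nu_N\otimes\nu_N\). Instead I truncate \(\widetilde{\mathcal H}_\varphi\) with a cutoff \(\Psi_R(x)\Psi_R(y)\). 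The product measures converge on the truncated bounded continuous kernel. The tail is handled by the estimate
\[
\int_R^\infty (1+x)^{\rho-\beta-1} g_N(x)\,dx\le C\,R^{\rho-\beta-2}E\to0,
\]
which is where \(\rho-\beta<2\) is used. Dominated convergence in time then yields \eqref{eq:weak-Hphi-final-Ux2}.

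\textbf{Entropy and dissipation.} The entropy bound for \(f^{\mathrm{reg}}\) follows from lower semicontinuity of the Carath\'eodory convex integrand \(\chi_{(a,b)}e(Ur)/U\), followed by monotone convergence. The dissipation bound follows from the linear entropy, Portmanteau applied to the lower semicontinuous nonnegative kernel \(U(y)U(x-y)/(U(x)U(y))\mathbf 1_{x>y}\), which is bounded by \(1\) by monotonicity of \(U\), and Fatou's lemma, exactly as in Step~9.
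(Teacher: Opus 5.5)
Your overall plan is sound and mostly coincides with the paper's proof. The shared steps are the energy and truncated-mass bounds, de la Vall\'ee--Poussin with $e(U(a)r)/U(b)$, and the same change of variables in the cutoff weak form. They also include Dunford--Pettis and Prokhorov with the atom at $0$, lower semicontinuity for the entropy, and Portmanteau with $\kappa(x,y)=\mathbf 1_{\{x>y\}}U(x-y)/U(x)\le 1$ for the dissipation.

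\textbf{The gap: the $\varphi(y)$-term.} One estimate fails as written, and it is the one carrying the Case~B difficulty. You isolate $U(x+y)\varphi(y)$ and use $U(x+y)\le C(1+x)^{\rho-\beta}$.
\begin{itemize}
\item That bound only controls the term by $\int(1+x)^{\rho-\beta}g_N(x)\,\mathrm dx\approx\int x^{2(\rho-\beta)}h_N$.
\item The energy controls only $\int x^{1+\rho-\beta}h_N$. So for $\rho-\beta>1$ the isolated piece is not controlled by the available moment bounds.
\item That piece also grows like $x^{\rho-\beta}$, not like $1+x$.
\end{itemize}
You need the exponent $\rho-\beta-1$, both for $|\mathcal H_\varphi|\lesssim 1+x$ and for the tail bound $CR^{\rho-\beta-2}E$. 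It comes only from keeping the two parts of the $\varphi(y)$-coefficient together. For $y\in\operatorname{supp}\varphi\subset(a,b)$, since $|U'(z)|\le C(1+z)^{\rho-\beta-1}$, the mean value theorem gives
\[
|U(x-y)-U(x+y)|\le 2y\sup_{\xi\in(x-y,x+y)}|U'(\xi)|\le C_b(1+x)^{\rho-\beta-1}.
\]
This cancellation is exactly what the paper uses, both for equicontinuity and when passing to the limit in the $\varphi(y)$-term. With it, the rest of your bounds go through.

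\textbf{Comparison of the limit passage.} Once this is repaired, your limit passage is a genuinely different and valid route.
\begin{itemize}
\item The paper keeps the ordered region $\{x>y\}$ and passes to the limit term by term. It uses Arzel\`a--Ascoli, with equicontinuity from uniform integrability of $g_N$, to show the inner integrals converge uniformly in the outer variable. It needs separate tail cutoffs for the $\varphi(x-y)$ and $\varphi(y)$ terms.
\item You reuse the Case~A mechanism. Narrow convergence of $g_N(t)\,\mathrm d\omega$ holds exactly as in Step~6 of Proposition~\ref{Prop:Existence:U=x:Hphi}, since $\int\omega g_N\le E$. The product measures then converge against the compactly supported continuous kernel $\widetilde{\mathcal H}_\varphi(x,y)\Psi_R(x)\Psi_R(y)$, and a uniform tail bound finishes the argument.
\end{itemize}

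Two points must be made explicit in your route.
\begin{itemize}
\item \emph{The symmetrized kernel.} The coefficients of $\varphi(x)$ and $\varphi(y)$ are no longer symmetric, so you must set $\widetilde{\mathcal H}_\varphi(x,y):=\mathcal H_\varphi(\max\{x,y\},\min\{x,y\})$, which is continuous on $[0,\infty)^2$. Simply replacing $x-y$ by $|x-y|$ as in Case~A gives a non-symmetric function. Its integral over $[0,\infty)^2$ misses the term $-2U(x-y)(\varphi(x)-\varphi(y))$ and is therefore not $2\iint_{x>y}g_Ng_N\mathcal H_\varphi$.
\item \emph{The tail.} On $\{\max\{x,y\}>R\}$ with $R>2b$, the kernel reduces to two pieces. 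The bounded strip term $U(|x-y|)\varphi(|x-y|)$ is controlled by $\int_R^\infty g_N\le E/R$. The $\varphi(\min\{x,y\})$ difference term is controlled by $CR^{\rho-\beta-2}E$ via the bound above. The same bounds hold for the limit $g$ by lower semicontinuity.
\end{itemize}

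Your route is shorter and isolates the single place where $\rho-\beta<2$ enters. The paper's route avoids symmetrization and product measures at the cost of considerably longer bookkeeping.
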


\begin{proof}
	We divide the proof into nine steps.
	
	\medskip
	\noindent
\textit{Step 1: Bounds for the approximating sequence.}

By Lemma~\ref{Lemma:EnergyCutoff}, we have
\begin{equation}\label{eq:Ux2-third-moment}
	\int_0^\infty \mathrm d\omega\, \omega U(\omega)f_N(t,\omega)
	=
	\int_0^\infty \mathrm d\omega\, \omega U(\omega) f_0(\omega)
	=:E_3
	\qquad\text{for all }t\ge0.
\end{equation}

Next, by Lemma~\ref{Lemma:EntropyTestCutoff}, we find
\begin{equation}\label{eq:Ux2-localL1}
	\int_{[0,\infty)} \mathrm d\omega\, h_N(t,\omega)
	=
	\int_0^N \mathrm d\omega\, f_N(t,\omega)
	\le
	\int_0^\infty \mathrm d\omega\, f_0(\omega)
	=:M_0
	\qquad\text{for all }t\ge0.
\end{equation}

Since
\[
U(\omega)=\omega^\rho(1+\omega)^{-\beta}\le C(1+\omega U(\omega))
\qquad\text{for all }\omega\ge0,\text{ where   }C>0 \text{ is a constant},
\]
we infer
\[
\int_{[0,\infty)} \mathrm d\omega\, U(\omega) h_N(t,\omega)
\le
C\int_{[0,\infty)} \mathrm d\omega\, h_N(t,\omega)
+
C\int_{[0,\infty)} \mathrm d\omega\, \omega U(\omega) h_N(t,\omega)
\le
C(M_0+E_3).
\]

Therefore
\begin{equation}\label{eq:Ux2-g-mass}
	\int_{[0,\infty)} \mathrm d\omega\, g_N(t,\omega)
	\le
	M_2:=C(M_0+E_3)
	\qquad\text{for all }t\ge0,
\end{equation}
and
\begin{equation}\label{eq:Ux2-g-firstmoment}
	\int_{[0,\infty)} \mathrm d\omega\, \omega g_N(t,\omega)
	=
	\int_{[0,\infty)} \mathrm d\omega\, \omega U(\omega) h_N(t,\omega)
	\le
	E_3
	\qquad\text{for all }t\ge0.
\end{equation}

\medskip
\noindent
\textit{Step 2: Local entropy bound for \(h_N\).}

We now use the entropy estimate. By Lemma~\ref{Lemma:EntropyTestCutoff}, we find
\[
\begin{aligned}
\int_{[0,\infty)} \mathrm d\omega\,
\frac{e\!\bigl(\omega^\rho(1+\omega)^{-\beta} h_N(t,\omega)\bigr)}
{\omega^\rho(1+\omega)^{-\beta}}
&=
\int_0^N \mathrm d\omega\,
\frac{e\!\bigl(\omega^\rho(1+\omega)^{-\beta} f_N(t,\omega)\bigr)}
{\omega^\rho(1+\omega)^{-\beta}}
\\
&\le
\int_0^\infty \mathrm d\omega\,
\frac{e\!\bigl(\omega^\rho(1+\omega)^{-\beta} f_0(\omega)\bigr)}
{\omega^\rho(1+\omega)^{-\beta}}
=:C_e.
\end{aligned}
\]
Repeating the arguments used in Step~2 of the proof of
Proposition~\ref{Prop:Existence:U=x:Hphi}, we obtain that the family
\[
\bigl\{h_N(t,\cdot):\,N\ge1,\ t\ge0\bigr\}
\]
is uniformly integrable on \((a,b)\). In addition, the family
\begin{equation}\label{eq:Ux2-g-dVP}
	\bigl\{g_N(t,\cdot):\,N\ge1,\ t\ge0\bigr\}
	\qquad\text{is uniformly integrable on }(0,b).
\end{equation}
	
	\medskip
	\noindent
	\textit{Step 3: Weak formulation for \(g_N\).}
	
We let \(\varphi\in C_c^2((0,\infty))\), and define
\[
\phi(\omega):=\omega^\rho(1+\omega)^{-\beta}\varphi(\omega).
\]
Since \(\varphi\in C_c^2((0,\infty))\), there exists \(N_\varphi\ge1\) such that
\[
\operatorname{supp}\varphi\subset(0,N_\varphi).
\]
In what follows we take \(N\ge N_\varphi\). Then, on the support of \(\varphi\),
\[
g_N(t,\omega)=U(\omega)f_N(t,\omega),
\]
and hence
\[
\int_0^\infty \mathrm d\omega\, \phi(\omega)f_N(t,\omega)
=
\int_0^\infty \mathrm d\omega\, g_N(t,\omega)\varphi(\omega).
\]
Since \(f_N\) solves \eqref{Cutoff2-new}, we have
\[
\frac{\mathrm d}{\mathrm dt}
\int_0^\infty \mathrm d\omega\, g_N(t,\omega)\varphi(\omega)
=
\int_0^\infty \mathrm d\omega\, \phi(\omega)\mathcal Q_N[f_N](t,\omega).
\]

We decompose the right-hand side into six terms:
\[
\int \phi\,\mathcal Q_N[f_N]
=
A_1-A_2-A_3-2B_1+2B_2+2B_3,
\]
where
\[
\begin{aligned}
	A_1
	&:=
	\int_0^\infty \mathrm d\omega\, \phi(\omega)
	\int_0^\omega \mathrm d\omega_1\,
	\omega_1^\rho(1+\omega_1)^{-\beta}(\omega-\omega_1)^\rho(1+\omega-\omega_1)^{-\beta}
	f_N(t,\omega_1)f_N(t,\omega-\omega_1)
	\chi_{\{\omega_1,\omega-\omega_1\le N\}},
	\\[1mm]
	A_2
	&:=
	\int_0^\infty \mathrm d\omega\, \phi(\omega)
	\int_0^\omega \mathrm d\omega_1\,
	\omega_1^\rho(1+\omega_1)^{-\beta}(\omega-\omega_1)^\rho(1+\omega-\omega_1)^{-\beta}
	f_N(t,\omega)f_N(t,\omega_1)
	\chi_{\{\omega,\omega_1\le N\}},
	\\[1mm]
	A_3
	&:=
	\int_0^\infty \mathrm d\omega\, \phi(\omega)
	\int_0^\omega \mathrm d\omega_1\,
	\omega_1^\rho(1+\omega_1)^{-\beta}(\omega-\omega_1)^\rho(1+\omega-\omega_1)^{-\beta}
	f_N(t,\omega)f_N(t,\omega-\omega_1)
	\chi_{\{\omega,\omega-\omega_1\le N\}},
\end{aligned}
\]
and
\[
\begin{aligned}
	B_1
	&:=
	\int_0^\infty \mathrm d\omega\, \phi(\omega)
	\int_0^\infty \mathrm d\omega_1\,
	\omega_1^\rho(1+\omega_1)^{-\beta}(\omega+\omega_1)^\rho(1+\omega+\omega_1)^{-\beta}
	f_N(t,\omega_1)f_N(t,\omega)
	\chi_{\{\omega,\omega_1\le N\}},
	\\[1mm]
	B_2
	&:=
	\int_0^\infty \mathrm d\omega\, \phi(\omega)
	\int_0^\infty \mathrm d\omega_1\,
	\omega_1^\rho(1+\omega_1)^{-\beta}(\omega+\omega_1)^\rho(1+\omega+\omega_1)^{-\beta}
	f_N(t,\omega_1)f_N(t,\omega+\omega_1)
	\chi_{\{\omega_1,\omega+\omega_1\le N\}},
	\\[1mm]
	B_3
	&:=
	\int_0^\infty \mathrm d\omega\, \phi(\omega)
	\int_0^\infty \mathrm d\omega_1\,
	\omega_1^\rho(1+\omega_1)^{-\beta}(\omega+\omega_1)^\rho(1+\omega+\omega_1)^{-\beta}
	f_N(t,\omega)f_N(t,\omega+\omega_1)
	\chi_{\{\omega,\omega+\omega_1\le N\}}.
\end{aligned}
\]

Repeating the arguments used in Step~3 of the proof of Proposition~\ref{Prop:Existence:U=x:Hphi}, we may rewrite
\[
A_1-A_2-A_3-2B_1+2B_2+2B_3
\]
and obtain
\[
\begin{aligned}
	\frac{\mathrm d}{\mathrm dt}\int_0^\infty \mathrm d\omega\, g_N(t,\omega)\varphi(\omega)
	&=
	2\int\!\!\!\int_{x>y>0} \mathrm dx\,\mathrm dy\,
	g_N(t,x)g_N(t,y)
	\Bigl[
		(x+y)^\rho(1+x+y)^{-\beta}\varphi(x+y)
		\\
		&\qquad\qquad
		-(x-y)^\rho(1+x-y)^{-\beta}\varphi(x)
		-(x+y)^\rho(1+x+y)^{-\beta}\varphi(x)
		\\
		&\qquad\qquad
		-(x+y)^\rho(1+x+y)^{-\beta}\varphi(y)
		+(x-y)^\rho(1+x-y)^{-\beta}\varphi(x-y)
		\\
		&\qquad\qquad
		+(x-y)^\rho(1+x-y)^{-\beta}\varphi(y)
	\Bigr].
\end{aligned}
\]

Therefore
\[
\frac{\mathrm d}{\mathrm dt}\int_0^\infty \mathrm d\omega\, g_N(t,\omega)\varphi(\omega)
=
2\int\!\!\!\int_{x>y>0} \mathrm dx\,\mathrm dy\,
g_N(t,x)g_N(t,y)\,\mathcal H_\varphi(x,y),
\]
where
\[
\mathcal H_\varphi(x,y)
=
(x+y)^\rho(1+x+y)^{-\beta}\varphi(x+y)
+
(x-y)^\rho(1+x-y)^{-\beta}\varphi(x-y)
\]
\[
\qquad
-
\Bigl((x-y)^\rho(1+x-y)^{-\beta}+(x+y)^\rho(1+x+y)^{-\beta}\Bigr)\varphi(x)
\]
\[
\qquad
+
\Bigl((x-y)^\rho(1+x-y)^{-\beta}-(x+y)^\rho(1+x+y)^{-\beta}\Bigr)\varphi(y).
\]
Equivalently, we have
\begin{equation}\label{eq:weak-Hphi-N-Ux2}
	\int_0^\infty \mathrm d\omega\, g_N(t,\omega)\varphi(\omega)
	=
	\int_0^\infty \mathrm d\omega\, g_N(0,\omega)\varphi(\omega)
	+
	2\int_0^t \mathrm ds
	\int\!\!\!\int_{x>y>0} \mathrm dx\,\mathrm dy\,
	g_N(s,x)g_N(s,y)\,\mathcal H_\varphi(x,y).
\end{equation}	
	\medskip
	\noindent
\textit{Step 4: Uniform boundedness and equicontinuity.}

Let \(\psi\in C_c^2((0,\infty))\), and choose \(0<a<b<\infty\) such that
\[
\operatorname{supp}\psi\subset(a,b).
\]
We define
\[
\varphi(\omega):=\frac{\psi(\omega)}{\omega^\rho(1+\omega)^{-\beta}}
=
\psi(\omega)\,\omega^{-\rho}(1+\omega)^\beta.
\]
It then follows that \(\varphi\in C_c^2((0,\infty))\), \(\operatorname{supp}\varphi\subset(a,b)\), and
\[
G_N^\psi(t):=
\int_0^\infty \mathrm d\omega\, \psi(\omega)h_N(t,\omega)
=
\int_0^\infty \mathrm d\omega\, \varphi(\omega)g_N(t,\omega).
\]
Therefore
\[
\frac{\mathrm d}{\mathrm dt}G_N^\psi(t)
=
2\int\!\!\!\int_{x>y>0} \mathrm dx\,\mathrm dy\,
g_N(t,x)g_N(t,y)\,\mathcal H_\varphi(x,y).
\]

We estimate the four terms in \(\mathcal H_\varphi\) separately.

First, if \(\varphi(x+y)\neq0\), then \(x+y\in(a,b)\), hence \(0<x<b\) and \(0<y<b\). We now estimate 
\[
(x+y)^\rho(1+x+y)^{-\beta}|\varphi(x+y)|
=
|\psi(x+y)|
\le
\|\psi\|_{L^\infty}.
\]
Thus
\[
\begin{aligned}
	&\int\!\!\!\int_{x>y>0} \mathrm dx\,\mathrm dy\,
	g_N(t,x)g_N(t,y)(x+y)^\rho(1+x+y)^{-\beta}|\varphi(x+y)|
	\\
	&\qquad\le
	\|\psi\|_{L^\infty}
	\int\!\!\!\int_{(0,b)^2} \mathrm dx\,\mathrm dy\,
	g_N(t,x)g_N(t,y)
	\le
	\|\psi\|_{L^\infty}M_2^2.
\end{aligned}
\]

Next, if \(\varphi(x-y)\neq0\), then \(x-y\in(a,b)\), so \(x=y+z\) with \(z\in(a,b)\). Since
\[
(x-y)^\rho(1+x-y)^{-\beta}|\varphi(x-y)|
=
|\psi(x-y)|,
\]
we can bound
\[
\begin{aligned}
	&\int\!\!\!\int_{x>y>0} \mathrm dx\,\mathrm dy\,
	g_N(t,x)g_N(t,y)(x-y)^\rho(1+x-y)^{-\beta}|\varphi(x-y)|
	\\
	&\qquad=
	\int_0^\infty \mathrm dy\, g_N(t,y)
	\int_a^b \mathrm dz\, g_N(t,y+z)|\psi(z)|.
\end{aligned}
\]
Since \(z\in(a,b)\), we estimate
\[
\int_a^b \mathrm dz\, g_N(t,y+z)|\psi(z)|
\le
\|\psi\|_{L^\infty}\int_{y+a}^{y+b} \mathrm dx\, g_N(t,x).
\]
For \(x\ge y+a\), we observe that
\[
g_N(t,x)\le \frac{x}{y+a}g_N(t,x),
\]
hence
\[
\int_{y+a}^{y+b} \mathrm dx\, g_N(t,x)
\le
\frac1{y+a}\int_{y+a}^{y+b} \mathrm dx\, xg_N(t,x)
\le
\frac{E_3}{y+a}.
\]
Therefore
\[
\begin{aligned}
	&\int\!\!\!\int_{x>y>0} \mathrm dx\,\mathrm dy\,
	g_N(t,x)g_N(t,y)(x-y)^\rho(1+x-y)^{-\beta}|\varphi(x-y)|
	\\
	&\qquad\le
	\|\psi\|_{L^\infty}E_3
	\int_0^\infty \mathrm dy\, \frac{g_N(t,y)}{y+a}
	\le
	\frac1a\|\psi\|_{L^\infty}E_3M_2.
\end{aligned}
\]

We now estimate the term containing \(\varphi(x)\). If \(\varphi(x)\neq0\), then \(x\in(a,b)\). Since \(x>y>0\), we have \(0<y<x<b\), and therefore
\[
0<x-y<x<b,
\qquad
0<x+y<2b.
\]
Hence there exists a constant \(C_{a,b}>0\), depending only on \(a,b,\rho,\beta\), such that
\[
\Bigl((x-y)^\rho(1+x-y)^{-\beta}+(x+y)^\rho(1+x+y)^{-\beta}\Bigr)|\varphi(x)|
\le
C_{a,b}\|\psi\|_{L^\infty}.
\]
Therefore,   we can bound
\[
\begin{aligned}
	&\int\!\!\!\int_{x>y>0} \mathrm dx\,\mathrm dy\,
	g_N(t,x)g_N(t,y)
	\Bigl((x-y)^\rho(1+x-y)^{-\beta}+(x+y)^\rho(1+x+y)^{-\beta}\Bigr)|\varphi(x)|
	\\
	&\qquad\le
	C_{a,b}\|\psi\|_{L^\infty}
	\int_a^b \mathrm dx\, g_N(t,x)\int_0^x \mathrm dy\, g_N(t,y)
	\le
	C_{a,b}\|\psi\|_{L^\infty}M_2^2.
\end{aligned}
\]

Finally, if \(\varphi(y)\neq0\), then \(y\in(a,b)\), and  thus \(y\le b\). We claim that there exists a constant \(C_b>0\), depending only on \(b,\rho,\beta\), such that
\[
\left|
(x-y)^\rho(1+x-y)^{-\beta}
-
(x+y)^\rho(1+x+y)^{-\beta}
\right|
\le
C_b(1+x)
\qquad\text{for all }x>y,\ y\in(a,b).
\]
Indeed, if we define
\begin{equation}\label{F}
F(z):=z^\rho(1+z)^{-\beta},
\end{equation}
then
\[
F'(z)
=
z^{\rho-1}(1+z)^{-\beta-1}\bigl(\rho+(\rho-\beta)z\bigr),
\]
and therefore, since \(\rho-\beta<2\), we estimate
\[
|F'(z)|\le C(1+z)^{\rho-\beta-1}\le C(1+z)
\qquad\text{for all }z\ge0.
\]
By the mean value theorem, for some \(\xi\in(x-y,x+y)\), we have
\[
|F(x-y)-F(x+y)|
\le
2y\,|F'(\xi)|
\le
C_b(1+x),
\]
because \(0<y\le b\) and \(0\le \xi\le x+y\le x+b\).

Since moreover
\[
|\varphi(y)|
=
|\psi(y)|\,y^{-\rho}(1+y)^\beta
\le
C_{a,b}\|\psi\|_{L^\infty},
\]
we find
\[
\left|
\Bigl((x-y)^\rho(1+x-y)^{-\beta}
-
(x+y)^\rho(1+x+y)^{-\beta}\Bigr)\varphi(y)
\right|
\le
C_{a,b}\|\psi\|_{L^\infty}(1+x).
\]
Hence, we can bound
\[
\begin{aligned}
	&\int\!\!\!\int_{x>y>0} \mathrm dx\,\mathrm dy\,
	g_N(t,x)g_N(t,y)
	\left|
	\Bigl((x-y)^\rho(1+x-y)^{-\beta}
	-
	(x+y)^\rho(1+x+y)^{-\beta}\Bigr)\varphi(y)
	\right|
	\\
	&\qquad\le
	C_{a,b}\|\psi\|_{L^\infty}
	\int_a^b \mathrm dy\, g_N(t,y)
	\int_y^\infty \mathrm dx\, (1+x)g_N(t,x).
\end{aligned}
\]
Since \(x\ge y\ge a\), we have \(1+x\le (1+\tfrac1a)x\). Therefore, we find
\[
\int_y^\infty \mathrm dx\, (1+x)g_N(t,x)
\le
\left(1+\frac1a\right)\int_y^\infty \mathrm dx\, xg_N(t,x)
\le
\left(1+\frac1a\right)E_3,
\]
and thus we bound
\[
\begin{aligned}
	&\int\!\!\!\int_{x>y>0} \mathrm dx\,\mathrm dy\,
	g_N(t,x)g_N(t,y)
	\left|
	\Bigl((x-y)^\rho(1+x-y)^{-\beta}
	-
	(x+y)^\rho(1+x+y)^{-\beta}\Bigr)\varphi(y)
	\right|
	\\
	&\qquad\le
	C_{a,b}\|\psi\|_{L^\infty}M_2E_3.
\end{aligned}
\]

Combining the four estimates, we find a constant \(C_{\varphi}>0\), independent of \(N\) and \(t\), such that
\[
\left|
\int\!\!\!\int_{x>y>0} \mathrm dx\,\mathrm dy\,
g_N(t,x)g_N(t,y)\,\mathcal H_\varphi(x,y)
\right|
\le
C_{\varphi}
\qquad\text{for all }N\ge1,\ t\ge0.
\]
Hence
\[
\left|\frac{\mathrm d}{\mathrm dt}G_N^\psi(t)\right|
\le
2C_{\varphi}
\qquad\text{for all }N\ge1,\ t\ge0.
\]
Thus \(\{G_N^\psi\}\) is uniformly Lipschitz on every interval \([0,T]\). In addition, using \eqref{eq:Ux2-localL1}, we get
\[
|G_N^\psi(t)|
\le
\|\psi\|_{L^\infty}\int_a^b \mathrm d\omega\, h_N(t,\omega)
\le
\|\psi\|_{L^\infty}M_0.
\]
Therefore \(\{G_N^\psi\}\) is uniformly bounded and uniformly equicontinuous on every interval \([0,T]\).

\medskip
	\noindent
	\textit{Step 5: Weak convergence of \(g_N\).}

We fix \(T>0\). By \eqref{eq:Ux2-localL1} and Step~4, Lemma~\ref{lem:compactness-local-measures} can be applied to the family \(\{h_N\}\) on \([0,T]\). As a consequence, after extracting a subsequence, there exists
\[
f^{\mathrm{reg}}
\in C\bigl([0,T];w\text{-}\mathcal M_{\mathrm loc}((0,\infty))\bigr)
\]
such that, for every \(\psi\in C_c((0,\infty))\),
\[
\int_0^\infty \mathrm d\omega\, \psi(\omega)h_N(t,\omega)
\longrightarrow
\int_0^\infty \mathrm d\omega\, \psi(\omega)f^{\mathrm{reg}}(t,\omega)
\qquad\text{as }N\to\infty,
\]
uniformly in \(t\in[0,T]\).

We now upgrade this convergence to local \(L^1\)-convergence. We fix \(0<a<b<\infty\). By Step 2, the family
\[
\bigl\{h_N(t,\cdot):\,N\ge1,\ t\in[0,T]\bigr\}
\]
is uniformly integrable on \((a,b)\). Together with \eqref{eq:Ux2-localL1}, this implies relative weak compactness in \(L^1((a,b))\) by the Dunford--Pettis theorem \cite{arkeryd1972boltzmann,komornik2016lectures}. Arguing exactly as in the  Proposition \ref{Prop:Existence:U=x:Hphi}, we conclude that for every \(t\in[0,T]\),
\[
f^{\mathrm{reg}}(t,\cdot)\in L^1((a,b)),
\qquad
h_N(t,\cdot)\rightharpoonup f^{\mathrm{reg}}(t,\cdot)\quad\text{weakly in }L^1((a,b)) \qquad\text{as }N\to\infty.
\]
Since \((a,b)\Subset(0,\infty)\) was arbitrarily chosen, we find
\[
f^{\mathrm{reg}}\in C\bigl([0,T];w\text{-}L^1_{\mathrm loc}((0,\infty))\bigr).
\]

We next upgrade the uniform convergence in time from continuous test functions to arbitrary
\(L^\infty((a,b))\)-test functions. That means for every \(\psi\in L^\infty((a,b))\),
\[
\sup_{t\in[0,T]}
\left|
\int_a^b \mathrm d\omega\, \psi(\omega)h_N(t,\omega)
-
\int_a^b \mathrm d\omega\, \psi(\omega)f^{\mathrm{reg}}(t,\omega)
\right|\to0
\qquad\text{as }N\to\infty.
\]
The proof is exactly the same as in the \(U(\omega)=\omega\) case and is omitted.

We now identify the possible full measure limits of \(f_N\). We fix \(t\ge0\).
Although the mass estimate above is stated only for \(h_N=f_N\chi_{\{\omega\le N\}}\),
it also yields a uniform full-mass bound for \(f_N(t,\cdot)\). Indeed,
\[
\int_{[0,\infty)} \mathrm d\omega\, f_N(t,\omega)
=
\int_0^N \mathrm d\omega\, f_N(t,\omega)
+
\int_N^\infty \mathrm d\omega\, f_N(t,\omega).
\]
The first term is bounded by
\[
\int_0^N \mathrm d\omega\, f_N(t,\omega)
=
\int_{[0,\infty)} \mathrm d\omega\, h_N(t,\omega)
\le M_0.
\]
We now bound the second term. Since
\[
\omega U(\omega)=\omega^{\rho+1}(1+\omega)^{-\beta},
\]
there exists \(C>0\) such that, for every \(\omega\ge1\), we have
\[
1\le \frac{C}{\omega^{\rho+1-\beta}}\,\omega U(\omega).
\]
Therefore, for \(N\ge1\), we estimate by Step 1
\[
\int_N^\infty \mathrm d\omega\, f_N(t,\omega)
\le
\frac{C}{N^{\rho+1-\beta}}
\int_N^\infty \mathrm d\omega\, \omega U(\omega)f_N(t,\omega)
\le
\frac{C E_3}{N^{\rho+1-\beta}}
\le
C E_3.
\]
Hence
\[
\sup_{N\ge1}
\int_{[0,\infty)} \mathrm d\omega\, f_N(t,\omega)
\le M_0+C E_3.
\]

Moreover, it is tight, because for every \(R\ge 1\),
\[
\int_R^\infty \mathrm d\omega\, f_N(t,\omega)
\le
\frac{C}{R^{\rho+1-\beta}}
\int_R^\infty \mathrm d\omega\, \omega U(\omega) f_N(t,\omega)
\le
\frac{CE_3}{R^{\rho+1-\beta}},
\]
for some universal constant \(C>0\).
Hence, by the Prokhorov theorem, after extraction there exists a nonnegative Radon measure \(\mu_t\in\mathcal M_+([0,\infty))\) such that
\[
f_N(t,\omega)\,\mathrm d\omega \rightharpoonup \mu_t
\qquad\text{narrowly in }\mathcal M([0,\infty)) \qquad\text{as }N\to\infty,
\]
which means, after extraction,
\[
\lim_{N\to\infty}
\int_{[0,\infty)} \mathrm d\omega\, f_N(t,\omega)\phi(\omega)
=
\int_{[0,\infty)} \phi(\omega)\,\mu_t(\mathrm d\omega),
\]
for all \(\phi\in C_b([0,\infty))\).

Similar with the previous case, for every fixed \(t\ge0\), every limit point in the narrow topology  of
\[
\bigl\{f_N(t,\omega)\,\mathrm d\omega\bigr\}_{N\ge1}
\]
in \(\mathcal M([0,\infty))\) has the form
\[
\alpha(t)\delta_{\omega=0}+f^{\mathrm{reg}}(t,\omega)\,\mathrm d\omega
\]
for some \(\alpha(t)\ge0\), possibly depending on the chosen cluster point.

We now identify the convergence of \(g_N\). We define
\[
g(t,\omega):=\omega^\rho(1+\omega)^{-\beta} f^{\mathrm{reg}}(t,\omega).
\]
We fix \(0<b<\infty\) and \(\psi\in L^\infty((0,b))\),  let \(\varepsilon>0\) and choose \(0<\delta<b\) so small that
\[
2\delta^\rho(1+\delta)^{-\beta} M_0\|\psi\|_{L^\infty}<\varepsilon.
\]
Then, we have
\[
\left|
\int_0^b \mathrm d\omega\, \psi(\omega)\bigl(g_N(t,\omega)-g(t,\omega)\bigr)
\right|
\le
I_{N,\delta}(t)+J_{N,\delta}(t),
\]
where
\[
I_{N,\delta}(t)
:=
\left|
\int_0^\delta \mathrm d\omega\, \psi(\omega)\bigl(g_N(t,\omega)-g(t,\omega)\bigr)
\right|,
\]
and
\[
J_{N,\delta}(t)
:=
\left|
\int_\delta^b \mathrm d\omega\, \psi(\omega)\bigl(g_N(t,\omega)-g(t,\omega)\bigr)
\right|.
\]

For the first term, using \(g_N=\omega^\rho(1+\omega)^{-\beta} h_N\), we obtain
\[
\int_0^\delta \mathrm d\omega\, g_N(t,\omega)
=
\int_0^\delta \mathrm d\omega\, \omega^\rho(1+\omega)^{-\beta} h_N(t,\omega)
\le
\delta^\rho(1+\delta)^{-\beta}\int_0^\delta \mathrm d\omega\, h_N(t,\omega)
\le
\delta^\rho(1+\delta)^{-\beta} M_0.
\]
Moreover,  we also have
\[
\int_0^\delta \mathrm d\omega\, g(t,\omega)\le \delta^\rho(1+\delta)^{-\beta} M_0.
\]
Therefore
\[
I_{N,\delta}(t)\le 2\delta^\rho(1+\delta)^{-\beta} M_0\|\psi\|_{L^\infty}<\varepsilon
\qquad\text{for all }N,\ t\in[0,T].
\]

For the second term, since \(\omega^\rho(1+\omega)^{-\beta}\psi(\omega)\in L^\infty((\delta,b))\), the already proved uniform convergence of \(h_N\) on \((\delta,b)\) gives
\[
J_{N,\delta}(t)\to0
\qquad\text{uniformly in }t\in[0,T] \qquad\text{as }N\to\infty.
\]
Hence
\[
\sup_{t\in[0,T]}
\left|
\int_0^b \mathrm d\omega\, \psi(\omega)\,g_N(t,\omega)
-
\int_0^b \mathrm d\omega\, \psi(\omega)\,g(t,\omega)
\right|\to0
\qquad\text{as }N\to\infty.
\]

Since the time \(T>0\) can be chosen arbitrarily, a diagonal extraction argument yields
\[
f^{\mathrm{reg}}\in C\bigl([0,\infty);w\text{-}L^1_{\mathrm loc}((0,\infty))\bigr),
\]
and all the preceding convergence properties hold on every interval \([0,T]\).

\medskip
\noindent
\textit{Step 6: Passage to the limit in the first three terms.}

We fix \(\varphi\in C_c^2((0,\infty))\), and choose \(0<a<b<\infty\) such that
\[
\operatorname{supp}\varphi\subset(a,b).
\]
We write
\[
\mathcal H_\varphi
=
\mathcal H_\varphi^{(1)}
+
\mathcal H_\varphi^{(2)}
-
\mathcal H_\varphi^{(3)}
+
\mathcal H_\varphi^{(4)},
\]
where
\[
\mathcal H_\varphi^{(1)}(x,y)
:=
(x+y)^\rho(1+x+y)^{-\beta}\varphi(x+y),
\]
\[
\mathcal H_\varphi^{(2)}(x,y)
:=
(x-y)^\rho(1+x-y)^{-\beta}\varphi(x-y),
\]
\[
\mathcal H_\varphi^{(3)}(x,y)
:=
\Bigl((x-y)^\rho(1+x-y)^{-\beta}
+
(x+y)^\rho(1+x+y)^{-\beta}\Bigr)\varphi(x),
\]
and
\[
\mathcal H_\varphi^{(4)}(x,y)
:=
\Bigl((x-y)^\rho(1+x-y)^{-\beta}
-
(x+y)^\rho(1+x+y)^{-\beta}\Bigr)\varphi(y).
\]

We first treat the terms \(\mathcal H_\varphi^{(1)}\), \(\mathcal H_\varphi^{(2)}\), and
\(\mathcal H_\varphi^{(3)}\). Throughout this step, we fix \(t\ge0\).

\smallskip
\noindent
\textit{The term \(\mathcal H_\varphi^{(1)}\).}

We define
\[
I_N^{(1)}(t)
:=
\int\!\!\!\int_{x>y>0} \mathrm dx\,\mathrm dy\,
g_N(t,x)g_N(t,y)(x+y)^\rho(1+x+y)^{-\beta}\varphi(x+y).
\]
Since \(\varphi(x+y)\neq0\) implies \(x+y\in(a,b)\), we have \(0<x<b\), \(0<y<b\), and for fixed
\(x\in(0,b)\) the conditions
\[
x>y>0,
\qquad
x+y<b
\]
imply
\[
0<y<\min\{x,b-x\}.
\]
Thus, we can write
\[
I_N^{(1)}(t)
=
\int_0^b \mathrm dx\, g_N(t,x)K_N^{(1)}(t,x),
\]
where
\[
K_N^{(1)}(t,x)
:=
\int_0^{\min\{x,b-x\}} \mathrm dy\,
g_N(t,y)(x+y)^\rho(1+x+y)^{-\beta}\varphi(x+y).
\]

For each \(x\in[0,b]\), we define
\[
\Psi_x(y)
:=
\mathbf 1_{(0,\min\{x,b-x\})}(y)
(x+y)^\rho(1+x+y)^{-\beta}\varphi(x+y),
\qquad y\in(0,b).
\]
Then \(\Psi_x\in L^\infty((0,b))\), and we have
\[
K_N^{(1)}(t,x)
=
\int_0^b \mathrm dy\, g_N(t,y)\Psi_x(y).
\]
Since
\[
g_N(t,\cdot)\rightharpoonup g(t,\cdot)
\qquad\text{weakly in }L^1((0,b)) \qquad\text{as }N\to\infty,
\]
we obtain, for every fixed \(x\in[0,b]\),
\[
K_N^{(1)}(t,x)\longrightarrow
K^{(1)}(t,x)
:=
\int_0^b \mathrm dy\, g(t,y)\Psi_x(y)
\qquad\text{as }N\to\infty.
\]

We next prove that the convergence is uniform in \(x\in[0,b]\). First, for every \(x\in[0,b]\), we bound
\[
|K_N^{(1)}(t,x)|
\le
\sup_{z\in[a,b]} z^\rho(1+z)^{-\beta}|\varphi(z)|
\int_0^b \mathrm dy\, g_N(t,y)
\le
\sup_{z\in[a,b]} z^\rho(1+z)^{-\beta}|\varphi(z)|\,M_2.
\]
Next, we let \(x,x'\in[0,b]\) and write
\[
\begin{aligned}
	\Psi_x(y)-\Psi_{x'}(y)
	&=
	\mathbf 1_{(0,\min\{x,b-x\})}(y)
	\Bigl[
	(x+y)^\rho(1+x+y)^{-\beta}\varphi(x+y)
	\\
	&\qquad\qquad\qquad
	-
	(x'+y)^\rho(1+x'+y)^{-\beta}\varphi(x'+y)
	\Bigr]
	\\
	&\quad
	+
	\Bigl(
	\mathbf 1_{(0,\min\{x,b-x\})}(y)
	-
	\mathbf 1_{(0,\min\{x',b-x'\})}(y)
	\Bigr)
	\\
	&\qquad\qquad\qquad
	\times
	(x'+y)^\rho(1+x'+y)^{-\beta}\varphi(x'+y).
\end{aligned}
\]
Thus, we can estimate
\[
\begin{aligned}
	|K_N^{(1)}(t,x)-K_N^{(1)}(t,x')|
	&\le
	\int_0^b \mathrm dy\, g_N(t,y)
	\bigl|
	(x+y)^\rho(1+x+y)^{-\beta}\varphi(x+y)
	\\
	&\qquad\qquad
	-
	(x'+y)^\rho(1+x'+y)^{-\beta}\varphi(x'+y)
	\bigr|
	\\
	&\quad
	+
	\int_0^b \mathrm dy\, g_N(t,y)
	\left|
	\mathbf 1_{(0,\min\{x,b-x\})}(y)
	-
	\mathbf 1_{(0,\min\{x',b-x'\})}(y)
	\right|
	\\
	&\qquad\qquad
	\times
	|(x'+y)^\rho(1+x'+y)^{-\beta}\varphi(x'+y)|.
\end{aligned}
\]

The map
\[
(x,y)\longmapsto (x+y)^\rho(1+x+y)^{-\beta}\varphi(x+y)
\]
is uniformly continuous on \([0,b]\times[0,b]\). Hence, given \(\varepsilon>0\), there exists
\(\delta_1>0\) such that, whenever \(|x-x'|<\delta_1\), we can bound
\[
\sup_{y\in[0,b]}
\bigl|
(x+y)^\rho(1+x+y)^{-\beta}\varphi(x+y)
-
(x'+y)^\rho(1+x'+y)^{-\beta}\varphi(x'+y)
\bigr|
<
\frac{\varepsilon}{2M_2}.
\]
Therefore, we obtain
\[
\int_0^b \mathrm dy\, g_N(t,y)
\bigl|
(x+y)^\rho(1+x+y)^{-\beta}\varphi(x+y)
-
(x'+y)^\rho(1+x'+y)^{-\beta}\varphi(x'+y)
\bigr|
<
\frac{\varepsilon}{2}.
\]

We now estimate the second term. To this end, we set
\[
m(x):=\min\{x,b-x\}.
\]
Then, we can estimate
\[
\left|
\mathbf 1_{(0,m(x))}(y)-\mathbf 1_{(0,m(x'))}(y)
\right|
\le
\mathbf 1_{(\min\{m(x),m(x')\},\,\max\{m(x),m(x')\})}(y),
\]
and
\[
|m(x)-m(x')|\le |x-x'|.
\]
Since
\[
|(x'+y)^\rho(1+x'+y)^{-\beta}\varphi(x'+y)|
\le
\sup_{z\in[a,b]} z^\rho(1+z)^{-\beta}|\varphi(z)|,
\]
and since the family
\[
\{g_N(t,\cdot):N\ge1,\ t\ge0\}
\]
is uniformly integrable on \((0,b)\), there exists \(\delta_2>0\) such that if
\(E\subset(0,b)\) is measurable and \(|E|<\delta_2\), then we find
\[
\sup_{N\ge1}\sup_{t\ge0}\int_E \mathrm dy\, g_N(t,y)
<
\frac{\varepsilon}{
	2\sup_{z\in[a,b]} z^\rho(1+z)^{-\beta}|\varphi(z)|}.
\]
Therefore, whenever \(|x-x'|<\delta_2\), we obtain
\[
\begin{aligned}
	&\int_0^b \mathrm dy\, g_N(t,y)
	\left|
	\mathbf 1_{(0,m(x))}(y)-\mathbf 1_{(0,m(x'))}(y)
	\right|
	|(x'+y)^\rho(1+x'+y)^{-\beta}\varphi(x'+y)|
	\\
	&\qquad <\frac{\varepsilon}{2}.
\end{aligned}
\]
Combining the two estimates, we conclude that
\[
|K_N^{(1)}(t,x)-K_N^{(1)}(t,x')|<\varepsilon
\]
whenever
\(
|x-x'|<\min\{\delta_1,\delta_2\},
\)
uniformly in \(N\) and \(t\). Thus the family
\[
\{K_N^{(1)}(t,\cdot)\}_{N\ge1}
\]
is uniformly bounded and uniformly equicontinuous on \([0,b]\). Since it converges pointwise to
\(K^{(1)}(t,\cdot)\), the Arzel\`a--Ascoli theorem implies that
\[
K_N^{(1)}(t,\cdot)\longrightarrow K^{(1)}(t,\cdot)
\qquad\text{uniformly on }[0,b].
\]

Therefore, we can estimate
\[
\begin{aligned}
	|I_N^{(1)}(t)-I^{(1)}(t)|
	&\le
	\left|
	\int_0^b \mathrm dx\, g_N(t,x)
	\bigl(K_N^{(1)}(t,x)-K^{(1)}(t,x)\bigr)
	\right|
	\\
	&\quad
	+
	\left|
	\int_0^b \mathrm dx\, \bigl(g_N(t,x)-g(t,x)\bigr)K^{(1)}(t,x)
	\right|.
\end{aligned}
\]
The first term tends to \(0\) by the uniform convergence of \(K_N^{(1)}\) and the bound
\[
\int_0^b \mathrm dx\,g_N(t,x)\le M_2.
\]
The second term  also tends to \(0\) as
\[
g_N(t,\cdot)\rightharpoonup g(t,\cdot)
\qquad\text{weakly in }L^1((0,b)),
\]
and \(K^{(1)}(t,\cdot)\in C([0,b])\subset L^\infty((0,b))\). Hence, we finally get
\[
I_N^{(1)}(t)\longrightarrow
I^{(1)}(t)
\qquad\text{for every }t\ge0 \qquad\text{as }N\to\infty.
\]

\smallskip
\noindent
\textit{The term \(\mathcal H_\varphi^{(2)}\).}

We now define
\[
I_N^{(2)}(t)
:=
\int\!\!\!\int_{x>y>0} \mathrm dx\,\mathrm dy\,
g_N(t,x)g_N(t,y)(x-y)^\rho(1+x-y)^{-\beta}\varphi(x-y).
\]
Writing \(x=y+z\), with \(z\in(a,b)\), we obtain
\[
I_N^{(2)}(t)
=
\int_0^\infty \mathrm dy\, g_N(t,y)K_N^{(2)}(t,y),
\]
where
\[
K_N^{(2)}(t,y)
:=
\int_a^b \mathrm dz\, g_N(t,y+z)z^\rho(1+z)^{-\beta}\varphi(z).
\]

We fix \(R>0\) and split
\[
I_N^{(2)}(t)
=
\int_0^R \mathrm dy\, g_N(t,y)K_N^{(2)}(t,y)
+
\int_R^\infty \mathrm dy\, g_N(t,y)K_N^{(2)}(t,y)
=:J_{N,R}^{(2)}(t)+T_{N,R}^{(2)}(t).
\]

For \(y\ge0\), we have
\[
\begin{aligned}
	|K_N^{(2)}(t,y)|
	&\le
	\sup_{z\in[a,b]} z^\rho(1+z)^{-\beta}|\varphi(z)|
	\int_a^b \mathrm dz\, g_N(t,y+z)
	\\
	&=
	\sup_{z\in[a,b]} z^\rho(1+z)^{-\beta}|\varphi(z)|
	\int_{y+a}^{y+b} \mathrm dx\, g_N(t,x).
\end{aligned}
\]
Using \eqref{eq:Ux2-g-firstmoment}, we obtain
\[
\int_{y+a}^{y+b} \mathrm dx\, g_N(t,x)
\le
\frac1{y+a}
\int_{y+a}^{y+b} \mathrm dx\, xg_N(t,x)
\le
\frac{E_3}{y+a}.
\]
Hence
\[
|K_N^{(2)}(t,y)|
\le
\frac{
	E_3\sup_{z\in[a,b]} z^\rho(1+z)^{-\beta}|\varphi(z)|
}{y+a}.
\]
It follows that
\[
|T_{N,R}^{(2)}(t)|
\le
\frac{
	E_3\sup_{z\in[a,b]} z^\rho(1+z)^{-\beta}|\varphi(z)|
}{R+a}
\int_R^\infty \mathrm dy\, g_N(t,y)
\le
\frac{
	E_3M_2\sup_{z\in[a,b]} z^\rho(1+z)^{-\beta}|\varphi(z)|
}{R+a}.
\]
This bound is uniform in \(N\) and \(t\), and tends to \(0\) as \(R\to\infty\).

For \(y\in[0,R]\), the variable \(y+z\) ranges in \([a,R+b]\). For each fixed
\(y\in[0,R]\), the function
\[
x\longmapsto (x-y)^\rho(1+x-y)^{-\beta}\varphi(x-y)
\]
belongs to \(L^\infty((0,R+b+1))\). Therefore, since
\[
g_N(t,\cdot)\rightharpoonup g(t,\cdot)
\qquad\text{weakly in }L^1((0,R+b+1)),
\]
we obtain
\[
K_N^{(2)}(t,y)\longrightarrow
K^{(2)}(t,y)
:=
\int_a^b \mathrm dz\, g(t,y+z)z^\rho(1+z)^{-\beta}\varphi(z)
\]
for every fixed \(y\in[0,R]\).

We next prove that this convergence is uniform in \(y\in[0,R]\). First, we bound
\[
|K_N^{(2)}(t,y)|
\le
\sup_{z\in[a,b]} z^\rho(1+z)^{-\beta}|\varphi(z)|
\int_0^{R+b+1} \mathrm dx\, g_N(t,x),
\]
so the family \(\{K_N^{(2)}(t,\cdot)\}_{N\ge1}\) is uniformly bounded on \([0,R]\). Moreover, for
\(y,y'\in[0,R]\),
\[
\begin{aligned}
	|K_N^{(2)}(t,y)-K_N^{(2)}(t,y')|
	&\le
	\int_0^{R+b+1} \mathrm dx\, g_N(t,x)
	\bigl|
	(x-y)^\rho(1+x-y)^{-\beta}\varphi(x-y)
	\\
	&\qquad\qquad
	-
	(x-y')^\rho(1+x-y')^{-\beta}\varphi(x-y')
	\bigr|.
\end{aligned}
\]
Here the expression
\[
(x-y)^\rho(1+x-y)^{-\beta}\varphi(x-y)
\]
is understood to be zero when \(x-y\notin(a,b)\). The corresponding function of
\((x,y)\) is continuous on the compact set \([0,R+b+1]\times[0,R]\), since
\(\varphi\in C_c^2((a,b))\). Hence it is uniformly continuous. Since
\[
\int_0^{R+b+1} \mathrm dx\, g_N(t,x)\le M_2,
\]
the family \(\{K_N^{(2)}(t,\cdot)\}_{N\ge1}\) is uniformly equicontinuous on \([0,R]\).
By the Arzel\`a--Ascoli theorem and uniqueness of the pointwise limit, we find
\[
K_N^{(2)}(t,\cdot)\longrightarrow K^{(2)}(t,\cdot)
\qquad\text{uniformly on }[0,R] \qquad\text{as }N\to\infty.
\]

Consequently, we can bound
\[
\begin{aligned}
	|J_{N,R}^{(2)}(t)-J_R^{(2)}(t)|
	&\le
	\left|
	\int_0^R \mathrm dy\, g_N(t,y)
	\bigl(K_N^{(2)}(t,y)-K^{(2)}(t,y)\bigr)
	\right|
	\\
	&\quad
	+
	\left|
	\int_0^R \mathrm dy\, \bigl(g_N(t,y)-g(t,y)\bigr)K^{(2)}(t,y)
	\right|,
\end{aligned}
\]
where
\[
J_R^{(2)}(t)
:=
\int_0^R \mathrm dy\, g(t,y)K^{(2)}(t,y).
\]
The first term tends to \(0\) by the uniform convergence of \(K_N^{(2)}\) on \([0,R]\) and the
mass bound for \(g_N\). The second term tends to \(0\) by the weak convergence of \(g_N\) in
\(L^1((0,R))\), since \(K^{(2)}(t,\cdot)\in C([0,R])\). Hence, 
\[
J_{N,R}^{(2)}(t)\longrightarrow J_R^{(2)}(t)
\qquad\text{as }N\to\infty.
\]

Moreover, the same tail estimate as above, together with the bound
\[
\int_{[0,\infty)} \mathrm d\omega\, g(t,\omega)\le M_2,
\]
gives
\[
|T_R^{(2)}(t)|
:=
\left|
\int_R^\infty \mathrm dy\, g(t,y)K^{(2)}(t,y)
\right|
\le
\frac{
	E_3M_2\sup_{z\in[a,b]} z^\rho(1+z)^{-\beta}|\varphi(z)|
}{R+a}.
\]
Therefore, we get
\[
\limsup_{N\to\infty}|I_N^{(2)}(t)-I^{(2)}(t)|
\le
\frac{
	2E_3M_2\sup_{z\in[a,b]} z^\rho(1+z)^{-\beta}|\varphi(z)|
}{R+a}.
\]
Letting \(R\to\infty\), we obtain
\[
I_N^{(2)}(t)\longrightarrow I^{(2)}(t)
\qquad\text{for every }t\ge0.
\]

\smallskip
\noindent
\textit{The term \(\mathcal H_\varphi^{(3)}\).}

We now define
\[
I_N^{(3)}(t)
:=
\int\!\!\!\int_{x>y>0} \mathrm dx\,\mathrm dy\,
g_N(t,x)g_N(t,y)
\Bigl((x-y)^\rho(1+x-y)^{-\beta}
+
(x+y)^\rho(1+x+y)^{-\beta}\Bigr)\varphi(x).
\]
Since \(\varphi(x)\neq0\) implies \(x\in(a,b)\), we have \(0<y<x<b\). Hence, we have
\[
I_N^{(3)}(t)
=
\int_a^b \mathrm dx\, g_N(t,x)K_N^{(3)}(t,x),
\]
where
\[
K_N^{(3)}(t,x)
:=
\varphi(x)
\int_0^x \mathrm dy\,
\Bigl((x-y)^\rho(1+x-y)^{-\beta}
+
(x+y)^\rho(1+x+y)^{-\beta}\Bigr)g_N(t,y).
\]

For each fixed \(x\in[a,b]\), the function
\[
y\longmapsto
\varphi(x)
\Bigl((x-y)^\rho(1+x-y)^{-\beta}
+
(x+y)^\rho(1+x+y)^{-\beta}\Bigr)\mathbf 1_{(0,x)}(y)
\]
belongs to \(L^\infty((0,b))\). Therefore, we obtain the convergence
\[
K_N^{(3)}(t,x)\longrightarrow
K^{(3)}(t,x)
:=
\varphi(x)
\int_0^x \mathrm dy\,
\Bigl((x-y)^\rho(1+x-y)^{-\beta}
+
(x+y)^\rho(1+x+y)^{-\beta}\Bigr)g(t,y)
\]
for every fixed \(x\in[a,b]\), as \(N\to\infty\).

We now show that the convergence is uniform in \(x\in[a,b]\). Since
\(
0<y<x<b,
\)
there exists a constant \(C_{a,b}>0\) such that
\[
\left|
\varphi(x)
\Bigl((x-y)^\rho(1+x-y)^{-\beta}
+
(x+y)^\rho(1+x+y)^{-\beta}\Bigr)
\right|
\le C_{a,b}\|\varphi\|_{L^\infty}.
\]
Hence, we can bound
\[
|K_N^{(3)}(t,x)|
\le
C_{a,b}\|\varphi\|_{L^\infty}\int_0^b \mathrm dy\, g_N(t,y)
\le
C_{a,b}\|\varphi\|_{L^\infty}M_2.
\]

Let \(x,x'\in[a,b]\), and assume for simplicity that \(x<x'\). We write
\[
\begin{aligned}
	K_N^{(3)}(t,x')-K_N^{(3)}(t,x)
	&=
	\int_0^b \mathrm dy\, g_N(t,y)
	\Bigl[
	\varphi(x')
	\Bigl((x'-y)^\rho(1+x'-y)^{-\beta}
	\\
	&\qquad\qquad
	+
	(x'+y)^\rho(1+x'+y)^{-\beta}\Bigr)\mathbf 1_{(0,x')}(y)
	\\
	&\qquad
	-
	\varphi(x)
	\Bigl((x-y)^\rho(1+x-y)^{-\beta}
	+
	(x+y)^\rho(1+x+y)^{-\beta}\Bigr)\mathbf 1_{(0,x)}(y)
	\Bigr].
\end{aligned}
\]
The   map
\[
(x,y)\longmapsto
\varphi(x)
\Bigl((x-y)^\rho(1+x-y)^{-\beta}
+
(x+y)^\rho(1+x+y)^{-\beta}\Bigr)
\]
is uniformly continuous on
\[
\{(x,y):a\le x\le b,\ 0\le y\le x\}.
\]
 Hence  using
\[
\int_0^b \mathrm dy\, g_N(t,y)\le M_2.
\]
 and the fact that
 the family \(\{g_N(t,\cdot):N\ge1,\ t\ge0\}\) is uniformly integrable on \((0,b)\), we conclude that the family
\(
\{K_N^{(3)}(t,\cdot)\}_{N\ge1}
\)
is uniformly bounded and uniformly equicontinuous on \([a,b]\). Since it converges pointwise to
\(K^{(3)}(t,\cdot)\), using the Arzel\`a--Ascoli theorem, we obtain the convergence
\[
K_N^{(3)}(t,\cdot)\longrightarrow K^{(3)}(t,\cdot)
\qquad\text{uniformly on }[a,b] \qquad\text{as }N\to\infty.
\]

Therefore, we can bound
\[
\begin{aligned}
	|I_N^{(3)}(t)-I^{(3)}(t)|
	&\le
	\left|
	\int_a^b \mathrm dx\, g_N(t,x)
	\bigl(K_N^{(3)}(t,x)-K^{(3)}(t,x)\bigr)
	\right|
	\\
	&\quad
	+
	\left|
	\int_a^b \mathrm dx\, \bigl(g_N(t,x)-g(t,x)\bigr)K^{(3)}(t,x)
	\right|.
\end{aligned}
\]
The first term tends to \(0\) by the uniform convergence of \(K_N^{(3)}\) and the bound
\[
\int_a^b \mathrm dx\, g_N(t,x)\le M_2.
\]
The second term tends to \(0\) by the weak convergence of \(g_N(t,\cdot)\) to \(g(t,\cdot)\) in
\(L^1((a,b))\), since \(K^{(3)}(t,\cdot)\in C([a,b])\). Hence, we find the convergence
\[
I_N^{(3)}(t)\longrightarrow I^{(3)}(t)
\qquad\text{for every }t\ge0 \qquad\text{as }N\to\infty.
\]

Combining the preceding three convergences, we obtain
\[
I_N^{(j)}(t)\longrightarrow I^{(j)}(t),
\qquad j=1,2,3,
\qquad\text{for every }t\ge0.
\]

	\medskip
	\noindent
	\textit{Step 7: Passage to the limit in the term \(\mathcal H_\varphi^{(4)}\).}

We define
\[
I_N^{(4)}(t)
:=
\int\!\!\!\int_{x>y>0} \mathrm dx\,\mathrm dy\,
g_N(t,x)g_N(t,y)
\Bigl((x-y)^\rho(1+x-y)^{-\beta}-(x+y)^\rho(1+x+y)^{-\beta}\Bigr)\varphi(y).
\]
Since \(\varphi(y)\neq0\) implies \(y\in(a,b)\), we may write
\[
I_N^{(4)}(t)
=
\int_a^b \mathrm dy\, \varphi(y)g_N(t,y)
\int_y^\infty \mathrm dx\, g_N(t,x)
\Bigl((x-y)^\rho(1+x-y)^{-\beta}-(x+y)^\rho(1+x+y)^{-\beta}\Bigr).
\]

For \(y\in[a,b]\), we define
\[
L_N(t,y)
:=
\varphi(y)\int_y^\infty \mathrm dx\, g_N(t,x)
\Bigl((x-y)^\rho(1+x-y)^{-\beta}-(x+y)^\rho(1+x+y)^{-\beta}\Bigr).
\]
Then
\[
I_N^{(4)}(t)=\int_a^b \mathrm dy\, g_N(t,y)L_N(t,y).
\]

We first show that \(L_N(t,y)\) is well defined and uniformly bounded. To this end, we define
\[
F(z):=z^\rho(1+z)^{-\beta}, \qquad z\ge0,
\]
and proceed as in \eqref{F}, we conclude that for every \(x>y>0\),
\[
|F(x-y)-F(x+y)|
\le
2y\sup_{\xi\in(x-y,x+y)}|F'(\xi)|
\le
C_{a,b}(1+x)^{\rho-\beta-1},
\]
for some constant \(C_{a,b}>0\) as \(y\in(a,b)\). Therefore, we can estimate
\[
\begin{aligned}
	|L_N(t,y)|
	&\le
	|\varphi(y)|\int_y^\infty \mathrm dx\, g_N(t,x)\,|F(x-y)-F(x+y)|
	\\
	&\le
	C_{a,b}\|\varphi\|_{L^\infty}
	\int_y^\infty \mathrm dx\, (1+x)^{\rho-\beta-1}g_N(t,x).
\end{aligned}
\]
Now, since \(x\ge y\ge a\), we have \((1+x)^{\rho-\beta-1}\le (1+\tfrac1a)x\). Thus, we can bound
\[
\int_y^\infty \mathrm dx\, (1+x)^{\rho-\beta-1}g_N(t,x)
\le
\left(1+\frac1a\right)\int_0^\infty \mathrm dx\, xg_N(t,x).
\]

Hence, we obtain
\[
\int_0^\infty \mathrm dx\, (1+x)^{\rho-\beta-1}g_N(t,x)\le C_{a,b}(M_0+E_3).
\]
Therefore
\[
|L_N(t,y)|\le C_{a,b,\varphi}
\qquad\text{for all }N,\ t\ge0,\ y\in[a,b].
\]

We now fix \(t\in[0,\infty)\) and prove that
\[
L_N(t,y)\longrightarrow L(t,y) \qquad\text{ as } N\to\infty
\qquad\text{for every }y\in[a,b], 
\]
where
\[
L(t,y)
:=
\varphi(y)\int_y^\infty \mathrm dx\, g(t,x)
\Bigl((x-y)^\rho(1+x-y)^{-\beta}-(x+y)^\rho(1+x+y)^{-\beta}\Bigr).
\]

To this end, we fix \(y\in[a,b]\) and \(R>b+1\). We split
\[
L_N(t,y)=L_{N,R}^{(4)}(t,y)+T_{N,R}^{(4)}(t,y),
\]
where
\[
L_{N,R}^{(4)}(t,y)
:=
\varphi(y)\int_y^R \mathrm dx\, g_N(t,x)
\Bigl((x-y)^\rho(1+x-y)^{-\beta}-(x+y)^\rho(1+x+y)^{-\beta}\Bigr),
\]
and
\[
T_{N,R}^{(4)}(t,y)
:=
\varphi(y)\int_R^\infty \mathrm dx\, g_N(t,x)
\Bigl((x-y)^\rho(1+x-y)^{-\beta}-(x+y)^\rho(1+x+y)^{-\beta}\Bigr).
\]

We first estimate the second term.  Using the estimate above, we have
\[
|T_{N,R}^{(4)}(t,y)|
\le
C_{a,b,\varphi}\int_R^\infty \mathrm dx\, (1+x)^{\rho-\beta-1}g_N(t,x).
\]
Since \(x\ge R\ge1\), we have, for some constant \(C>0\)
\[
(1+x)^{\rho-\beta-1}\le Cx^{\rho-\beta-1}.
\]
Moreover, because \(\rho-\beta<2\), it follows that \(\rho-\beta-1<1\), and therefore
\[
x^{\rho-\beta-1}\le R^{\rho-\beta-2}x
\qquad\text{for all }x\ge R.
\]
Hence, we can bound
\[
(1+x)^{\rho-\beta-1}\le C R^{\rho-\beta-2}x
\qquad\text{for all }x\ge R.
\]
Substituting this into the previous estimate, we obtain
\[
|T_{N,R}^{(4)}(t,y)|
\le
C_{a,b,\varphi}R^{\rho-\beta-2}
\int_R^\infty \mathrm dx\, xg_N(t,x).
\]
Using \eqref{eq:Ux2-g-firstmoment}, we infer
\[
\int_R^\infty \mathrm dx\, xg_N(t,x)
\le
\int_0^\infty \mathrm dx\, xg_N(t,x)
\le
E_3.
\]
Therefore, we obtain the estimate
\[
|T_{N,R}^{(4)}(t,y)|
\le
C_{a,b,\varphi}R^{\rho-\beta-2}E_3,
\]
uniformly in \(N\), \(t\), and \(y\in[a,b]\). Since \(\rho-\beta<2\), this tends to \(0\) as \(R\to\infty\).

On the other hand, for fixed \(y\in[a,b]\), the function
\[
x\longmapsto
\varphi(y)\Bigl((x-y)^\rho(1+x-y)^{-\beta}-(x+y)^\rho(1+x+y)^{-\beta}\Bigr)\chi_{(y,R)}(x)
\]
belongs to \(L^\infty((0,R+1))\). Since
\[
g_N(t,\cdot)\rightharpoonup g(t,\cdot)
\qquad\text{weakly in }L^1((0,R+1))
\qquad\text{as }N\to\infty,
\]
we obtain
\[
L_{N,R}^{(4)}(t,y)\longrightarrow
L_R^{(4)}(t,y)
:=
\varphi(y)\int_y^R \mathrm dx\, g(t,x)
\Bigl((x-y)^\rho(1+x-y)^{-\beta}-(x+y)^\rho(1+x+y)^{-\beta}\Bigr)
\]
as \(N\to\infty\). Letting then \(R\to\infty\), we conclude that
\[
L_N(t,y)\longrightarrow L(t,y)
\qquad\text{for every }y\in[a,b].
\]

We next prove that the convergence is uniform in \(y\in[a,b]\). First, we already know that the family \(\{L_N(t,\cdot)\}_N\) is uniformly bounded on \([a,b]\). We now show that it is uniformly equicontinuous. We let \(a\le y<y'\le b\) and then
\[
\begin{aligned}
	|L_N(t,y')-L_N(t,y)|
	&\le
	|\varphi(y')-\varphi(y)|
	\int_{y'}^\infty \mathrm dx\, g_N(t,x)\,
	|F(x-y')-F(x+y')|
	\\
	&\quad
	+
	|\varphi(y)|
	\left|
	\int_{y'}^\infty \mathrm dx\, g_N(t,x)\bigl(F(x-y')-F(x+y')\bigr)
	\right.
	\\
	&\qquad\qquad
	\left.
	-
	\int_y^\infty \mathrm dx\, g_N(t,x)\bigl(F(x-y)-F(x+y)\bigr)
	\right|.
\end{aligned}
\]
For the first term on the right-hand side, using again
\[
|F(x-y)-F(x+y)|\le C_{a,b}(1+x),
\]
we obtain
\[
|\varphi(y')-\varphi(y)|
\int_{y'}^\infty \mathrm dx\, g_N(t,x)\,
|F(x-y')-F(x+y')|
\le
C_{a,b}|\varphi(y')-\varphi(y)|
\int_0^\infty \mathrm dx\, (1+x)g_N(t,x),
\]
which is uniformly small when \(|y'-y|\) is small, by the continuity of the function \(\varphi\).

Now, we consider the second term. We write it as the sum of
\[
\int_{y'}^\infty \mathrm dx\, g_N(t,x)
\left|
\bigl(F(x-y')-F(x+y')\bigr)-\bigl(F(x-y)-F(x+y)\bigr)
\right|
\]
and
\[
\int_y^{y'} \mathrm dx\, g_N(t,x)\,|F(x-y)-F(x+y)|.
\]
Since the function
\[
(y,x)\longmapsto F(x-y)-F(x+y)
\]
is continuous on the compact set
\[
\{(y,x)\in[a,b]\times[a,\infty):\,x\in[y,R]\}
\]
for every fixed \(R>b\), the first integral over \((y',R)\) is uniformly small when \(|y'-y|\) is small, while the tail over \((R,\infty)\) is uniformly small by the estimate above. For the second integral, since \(x\in(y,y')\subset(a,b)\), the factor \(|F(x-y)-F(x+y)|\) is uniformly bounded on the corresponding compact region, and therefore
\[
\int_y^{y'} \mathrm dx\, g_N(t,x)\,|F(x-y)-F(x+y)|
\le
C_{a,b}\int_y^{y'} \mathrm dx\, g_N(t,x).
\]
Since \(\{g_N(t,\cdot)\}_N\) is uniformly integrable on \((a,b)\), the right-hand side of the above inequality is uniformly small when \(|y'-y|\) is small. Hence the family \(\{L_N(t,\cdot)\}_N\) is uniformly equicontinuous on \([a,b]\).

Therefore, by the Arzel\`a--Ascoli theorem and uniqueness of the pointwise limit, we conclude
\[
L_N(t,\cdot)\longrightarrow L(t,\cdot)
\qquad\text{uniformly on }[a,b]
\qquad\text{as }N\to\infty.
\]
Consequently, we obtain
\[
I_N^{(4)}(t)=\int_a^b \mathrm dy\, g_N(t,y)L_N(t,y)
\longrightarrow
\int_a^b \mathrm dy\, g(t,y)L(t,y)
=:I^{(4)}(t)
\qquad\text{as }N\to\infty
\]
for every \(t\ge0\).
	
	\medskip
	\noindent
	\textit{Step 8: Passage to the limit.}

We fix \(T>0\). Combining Steps~6 and~7, we obtain for every \(t\in[0,T]\),
\[
\int\!\!\!\int_{x>y>0} \mathrm dx\,\mathrm dy\,
g_N(t,x)g_N(t,y)\mathcal H_\varphi(x,y)
\longrightarrow
\int\!\!\!\int_{x>y>0} \mathrm dx\,\mathrm dy\,
g(t,x)g(t,y)\mathcal H_\varphi(x,y)
\qquad\text{as }N\to\infty.
\]
Moreover, Step~4 yields the uniform bound
\[
\left|
\int\!\!\!\int_{x>y>0} \mathrm dx\,\mathrm dy\,
g_N(t,x)g_N(t,y)\mathcal H_\varphi(x,y)
\right|
\le
C_\varphi
\qquad\text{for all }N\ge 1,\ t\in[0,T].
\]
Hence, by the dominated convergence theorem in time, we obtain the convergence
\[
\int_0^t \mathrm ds
\int\!\!\!\int_{x>y>0} \mathrm dx\,\mathrm dy\,
g_N(s,x)g_N(s,y)\mathcal H_\varphi(x,y)
\longrightarrow
\int_0^t \mathrm ds
\int\!\!\!\int_{x>y>0} \mathrm dx\,\mathrm dy\,
g(s,x)g(s,y)\mathcal H_\varphi(x,y),
\]
as \(N\to\infty\).
Repeating the arguments used in the proof of Proposition \ref{Prop:Existence:U=x:Hphi},  
passing to the limit in \eqref{eq:weak-Hphi-N-Ux2}, we obtain
\[
\int_0^\infty \mathrm d\omega\, g(t,\omega)\varphi(\omega)
=
\int_0^\infty \mathrm d\omega\, g_0(\omega)\varphi(\omega)
+
2\int_0^t \mathrm ds
\int\!\!\!\int_{x>y>0} \mathrm dx\,\mathrm dy\,
g(s,x)g(s,y)\mathcal H_\varphi(x,y),
\]
for every \(t\in[0,T]\). Since \(T>0\) is arbitrary, this holds for every \(t\ge0\).  It is therefore a weak form of \eqref{3wave} for
\[
f(t,\omega)=f^{\mathrm{reg}}(t,\omega).
\]

	\medskip
	\noindent
	\textit{Step 9: Entropy bound for the regular part.}

Fix \(t\ge0\). For every \(N\), we always have
\begin{equation}\label{eq:Ux2-entropyN}
	\int_0^\infty \mathrm d\omega\,
	\frac{e\!\bigl(\omega^\rho(1+\omega)^{-\beta} h_N(t,\omega)\bigr)}{\omega^\rho(1+\omega)^{-\beta}}
	\le
	\int_0^\infty \mathrm d\omega\,
	\frac{e\!\bigl(\omega^\rho(1+\omega)^{-\beta} f_0(\omega)\bigr)}{\omega^\rho(1+\omega)^{-\beta}}.
\end{equation}
Let \(0<a<b<\infty\). We consider the functional
\[
\mathcal E_{a,b}(u)
:=
\int_a^b \mathrm d\omega\,
\frac{e\!\bigl(\omega^\rho(1+\omega)^{-\beta}u(\omega)\bigr)}{\omega^\rho(1+\omega)^{-\beta}},
\qquad
u\in L^1((a,b)),\ u\ge0.
\]
Repeating the arguments used in the proof of Proposition \ref{Prop:Existence:U=x:Hphi}, we obtain
\[
\int_a^b \mathrm d\omega\,
\frac{e\!\bigl(\omega^\rho(1+\omega)^{-\beta} f^{\mathrm{reg}}(t,\omega)\bigr)}{\omega^\rho(1+\omega)^{-\beta}}
\le
\int_0^\infty \mathrm d\omega\,
\frac{e\!\bigl(\omega^\rho(1+\omega)^{-\beta} f_0(\omega)\bigr)}{\omega^\rho(1+\omega)^{-\beta}}.
\]
Letting \(a\) converge to \(0\) and \(b\) converge to \(\infty\), and using the monotone convergence theorem, we conclude that
\[
\int_0^\infty \mathrm d\omega\,
\frac{e\!\bigl(\omega^\rho(1+\omega)^{-\beta} f^{\mathrm{reg}}(t,\omega)\bigr)}{\omega^\rho(1+\omega)^{-\beta}}
\le
\int_0^\infty \mathrm d\omega\,
\frac{e\!\bigl(\omega^\rho(1+\omega)^{-\beta} f_0(\omega)\bigr)}{\omega^\rho(1+\omega)^{-\beta}}.
\]

The proof of \eqref{eq:dissipation-assumption:3} is the same with the case $U(\omega)=\omega$. \end{proof}
\section{Convergence to equilibrium}
\begin{proposition}
	\label{prop:equilibrium}
	Assume that \(U(\omega)=\omega\) or \(U(\omega)=\omega^\rho(1+\omega)^{-\beta}\), (\(\rho-\beta<2,
	\rho\ge1,
	\beta\le \rho-1\)) and let
	\[
	f\in C\bigl([0,\infty);w\text{-}L^1_{\mathrm{loc}}((0,\infty))\bigr)
	\]
	be a nonnegative global weak solution constructed in Subsections \ref{Sec:CaseA} and \ref{Sec:CaseB}.
		Then   	\[
	\lim_{t\to\infty}f(t,\cdot)= 0
	\qquad\text{in }L^1_{\mathrm{loc}}((0,\infty)).
	\]
\end{proposition}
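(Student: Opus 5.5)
The plan is to combine the time-integrated dissipation bound \eqref{eq:dissipation-assumption-limit} (and its Case B analogue \eqref{eq:dissipation-assumption:3}) with the uniform-in-time equicontinuity and uniform integrability of \(f(t,\cdot)\) on compact intervals. The dissipation bound states that
\[
D(s):=\int_0^\infty \mathrm d\omega\int_0^\omega \mathrm d\omega_1\, U(\omega_1)U(\omega-\omega_1) f(s,\omega)f(s,\omega_1)
\]
is integrable on \((0,\infty)\). Fix \(0<a<b<\infty\). Restricting the inner integral to \(\omega,\omega_1\in(a,b)\) with \(\omega-\omega_1\ge \delta\) gives \(D(s)\ge c_{a,b,\delta}\iint_{\{a<\omega_1<\omega-\delta<b\}} f(s,\omega)f(s,\omega_1)\). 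Letting \(m_{a,b}(s):=\int_a^b f(s,\omega)\,\mathrm d\omega\), the uniform integrability of \(\{f(s,\cdot)\}_{s\ge0}\) on \((a,b)\) follows from the uniform entropy bound and de la Vall\'ee--Poussin, exactly as in Step 2 of Proposition~\ref{Prop:Existence:U=x:Hphi}. Split \((a,b)\) into short subintervals \(I_j\) of length \(\delta\). The mass in each \(I_j\), and in neighbouring pairs, is uniformly small. The off-diagonal products \(\int_{I_j}f\int_{I_k}f\) with \(|j-k|\ge2\) are controlled by \(D(s)\). This should yield
\[
m_{a,b}(s)^2\le C_{a,b,\delta}D(s)+\varepsilon(\delta)\, m_{a,b}(s)\,\sup_s m_{a,b}(s),
\]
with \(\varepsilon(\delta)\to0\). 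Hence \(\int_0^\infty \min\{m_{a,b}(s)^2,\cdot\}\) is essentially finite, up to a small error.

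The next step upgrades integrability in time to convergence. The weak formulation \eqref{thm:weak1}/\eqref{thm:weak2}, together with the uniform bound \(C_\varphi\) on the collision term from Step 4 of the existence propositions, which passes to the limit, gives that \(t\mapsto\int\psi f(t)\) is uniformly Lipschitz for \(\psi\in C_c^2((0,\infty))\). A Barbalat-type argument then applies. If \(\int\psi f(t_k)\ge\eta>0\) along \(t_k\to\infty\) for some nonnegative \(\psi\) supported in \((a,b)\), then \(m_{a',b'}(s)\gtrsim\eta\) on intervals \([t_k,t_k+\tau]\) of fixed length. Combined with the previous paragraph, this forces \(\int_{t_k}^{t_k+\tau}D\ge c\eta^2\tau\) for infinitely many disjoint intervals, contradicting integrability of \(D\).

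The conclusion is that \(f(t)\rightharpoonup0\) against every \(\psi\in C_c((0,\infty))\). By uniform integrability on compacts and the Dunford--Pettis argument of Step 5, this gives \(f(t,\cdot)\to0\) weakly in \(L^1_{\mathrm{loc}}\). Since \(f\ge0\), testing with the indicator of \((a,b)\) gives \(\int_a^b f(t)\to0\), which is strong \(L^1_{\mathrm{loc}}\) convergence.

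\textbf{Main obstacle.} I expect the hard step to be the lower bound of \(D\) by the square of the local mass. The kernel gives only off-diagonal interactions (\(\omega_1<\omega\)) with weight \(U(\omega-\omega_1)\), which degenerates on the diagonal. A concentration of mass in a very thin band could make \(D\) small while \(m_{a,b}\) stays large. I would rule this out using uniform integrability: a thin band of width \(\delta\) carries at most \(\varepsilon(\delta)\) mass uniformly in time. If that is too lossy, I would instead use a two-scale argument, noting that a positive limit profile of weak subsequential limits \(f(t_k+\cdot)\) must have \(D=0\). That forces the product \(f(\omega)f(\omega_1)\) to vanish a.e. off the diagonal, so the profile is supported on a null set, and \(L^1\) absolute continuity then forces it to be zero.
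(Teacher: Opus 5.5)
Your proposal is correct, but it takes a genuinely different route from the paper.

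\emph{The paper's route.} The paper argues in LaSalle style. It forms the time translates $f(t_n+\cdot)$ and extracts a limit $F$ in $C([0,T];w\text{-}L^1_{\mathrm{loc}}((0,\infty)))$ using the compactness machinery of the existence proofs. It then shows that the dissipation of $F(s,\cdot)$ vanishes for a.e.\ $s$. This uses $\int_{t_n}^{t_n+T}\mathfrak W\to0$ together with a lower-semicontinuity argument for the bilinear dissipation, in which the kernel is approximated from below by continuous functions and then by finite tensor sums. Finally it concludes $F\equiv0$, because a nonzero nonnegative $L^1$ function charges two disjoint intervals on whose product the kernel is positive. This is essentially your fallback.

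\emph{Your route.} Your main route proves coercivity directly on $f$. Set $\varepsilon(\delta):=\sup_{s\ge0}\sup_{E\subset(a,b),\,|E|\le\delta}\int_E f(s)$. Partitioning $(a,b)$ into intervals of length $\delta$, and using $U(\omega_1)U(\omega-\omega_1)\ge U(a)U(\delta)$ off the near-diagonal bands, gives $m_{a,b}(s)^2\le\frac{2}{U(a)U(\delta)}\mathfrak W(s)+3\varepsilon(\delta)\,m_{a,b}(s)$. This is the precise form of your inequality. Since $\mathcal E_e[f(t)]\le\mathcal E_e[f_0]$ holds for all $t\ge0$ with a single constant, $\varepsilon(\delta)\to0$ uniformly in time. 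So the thin-band obstacle you flag is already resolved by your first attempt, and the fallback is not needed. The time-uniform Lipschitz bound you need also holds. The Step~4 bound $C_\varphi$ on the collision term persists for the limit solution, and its constants depend only on the time-independent quantities $M_1$ (resp.\ $M_2,E_3$). With these two facts your Barbalat argument closes. Nonnegativity plus a test function $\psi\ge\mathbf 1_{[a,b]}$ then gives $\int_a^b f(t)\to0$ directly, so the Dunford--Pettis step is unnecessary.

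\emph{What each approach buys.} Yours avoids both the extraction of time-translate limits and the bilinear lower-semicontinuity lemma. It is also quantitative: the total time during which $m_{a,b}(t)\ge\eta$ is at most $C\eta^{-2}\int_0^\infty\mathfrak W\le C\eta^{-2}M_0$, with $C$ depending only on $a,b,\eta,U,e,\mathcal E_e[f_0]$. The paper's approach is softer: it needs only positivity of the kernel off the diagonal plus weak compactness, and it identifies all time-asymptotic profiles at once. In both arguments the entropy plays the same decisive role, namely to rule out concentration on the diagonal $\omega=\omega_1$, where the weight $U(\omega-\omega_1)$ degenerates.
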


\begin{proof}
	We divide the proof into three steps.
	
	\medskip
	\noindent
\medskip
\noindent
\textit{Step 1: Time-translated solutions.}

Let \(\{t_n\}_{n\ge1}\) be any sequence such that \(t_n\to\infty\). We fix \(T>0\), and define
\[
f_n(s,\omega):=f(t_n+s,\omega),
\qquad (s,\omega)\in[0,T]\times(0,\infty).
\]
We will show that each \(f_n\) satisfies on \([0,T]\) the same weak formulation as \(f\), with initial datum
\(
f_n(0,\omega)=f(t_n,\omega).
\)

We first consider the case \(U(\omega)=\omega\) and define
\[
g(t,\omega):=U(\omega)f(t,\omega),
\qquad
g_n(s,\omega):=U(\omega)f_n(s,\omega)=g(t_n+s,\omega).
\]

By the weak formulation already proved for \(f\), for every
\(\varphi\in C_c^2((0,\infty))\) and every \(r\ge0\), we have
\[
\int_0^\infty \mathrm{d}\omega\, g(r,\omega)\varphi(\omega)
=
\int_0^\infty \mathrm{d}\omega\, g_0(\omega)\varphi(\omega)
+
2\int_0^r \mathrm{d}\tau
\iint_{x>y>0}
\mathrm{d}x\,\mathrm{d}y\,
g(\tau,x)g(\tau,y)\,H_\varphi(x,y),
\]
where
\[
H_\varphi(x,y)
=
(x+y)\varphi(x+y)
+
(x-y)\varphi(x-y)
-
2x\,\varphi(x)
-
2y\,\varphi(y).
\]
Applying this identity with \(r=t_n+s\) and with \(r=t_n\), and subtracting the two equalities, we obtain
\[
\int_0^\infty \mathrm{d}\omega\, g(t_n+s,\omega)\varphi(\omega)
=
\int_0^\infty \mathrm{d}\omega\, g(t_n,\omega)\varphi(\omega)
+
2\int_{t_n}^{t_n+s} \mathrm{d}\tau
\iint_{x>y>0}
\mathrm{d}x\,\mathrm{d}y\,
g(\tau,x)g(\tau,y)\,H_\varphi(x,y).
\]
With the change of variable \(\tau=t_n+\sigma\), this identity becomes
\[
\int_0^\infty \mathrm{d}\omega\, g_n(s,\omega)\varphi(\omega)
=
\int_0^\infty \mathrm{d}\omega\, g_n(0,\omega)\varphi(\omega)
+
2\int_0^s \mathrm{d}\sigma
\iint_{x>y>0}
\mathrm{d}x\,\mathrm{d}y\,
g_n(\sigma,x)g_n(\sigma,y)\,H_\varphi(x,y).
\]
Thus \(f_n\) satisfies on \([0,T]\) the same weak formulation as \(f\).

We now consider the case
\[
U(\omega)=\omega^\rho(1+\omega)^{-\beta},
\qquad
\rho-\beta<2,
\qquad
\rho\ge1,
\qquad
\beta\le \rho-1.
\]
By the same argument as above, \(f_n\) also satisfies on \([0,T]\) the same weak formulation as \(f\).

First, we observe that the energy estimate for \(f\) is uniformly bounded for all times. More precisely, we have
\[
\sup_{t\ge0}\int_0^\infty \mathrm{d}\omega\,\omega U(\omega)f(t,\omega)<\infty .
\]
In particular, for every compact interval
\([a,b]\subset(0,\infty)\), we can bound
\[
\sup_{n\ge1}\sup_{s\in[0,T]}\int_a^b \mathrm{d}\omega\, f_n(s,\omega)<\infty.
\]

Next, the entropy inequality for \(f\) is uniform in time. Recalling that, for
\[
\mathcal E_e(f(t))
:=
\int_0^\infty \mathrm{d}\omega\, \frac{e(U(\omega)f(t,\omega))}{U(\omega)},
\]
we have
\[
\mathcal E_e(f(t))\le \mathcal E_e(f_0)
\qquad\text{for all } t\ge0.
\]
Hence, for every \(n\) and every \(s\in[0,T]\), we bound
\[
\int_0^\infty \mathrm{d}\omega\, \frac{e(U(\omega)f_n(s,\omega))}{U(\omega)}
=
\int_0^\infty \mathrm{d}\omega\, \frac{e(U(\omega)f(t_n+s,\omega))}{U(\omega)}
\le \mathcal E_e(f_0),
\]
which yields the uniform integrability of
\(\{f_n(s)\}_{n\ge1,\;s\in[0,T]}\) on every compact interval \((a,b)\subset(0,\infty)\).

We will now verify  equicontinuity. We let
\(\psi\in C_c^2((0,\infty))\). Then
\[
s\mapsto \int_0^\infty \mathrm{d}\omega\, \psi(\omega)f_n(s,\omega)
=
\int_0^\infty \mathrm{d}\omega\, \psi(\omega)f(t_n+s,\omega).
\]
By the same arguments as the ones used in Subsections \ref{Sec:CaseA} and \ref{Sec:CaseB}, we can conclude that there exists a constant \(C_\psi>0\), independent of \(n\), such that
\[
\left|
\frac{\mathrm d}{\mathrm ds}
\int_0^\infty \mathrm{d}\omega\, \psi(\omega)f_n(s,\omega)
\right|
\le C_\psi
\qquad\text{for all } s\in[0,T],\ n\ge1.
\]
Thus the family
\[
s\mapsto \int_0^\infty \mathrm{d}\omega\, \psi(\omega)f_n(s,\omega)
\]
is uniformly bounded and uniformly equicontinuous on \([0,T]\).

We have therefore verified for \(\{f_n\}\) the same hypotheses used in the compactness argument of the existence proofs done in Subsections \ref{Sec:CaseA} and \ref{Sec:CaseB}. Consequently, by the same argument used in Subsections \ref{Sec:CaseA} and \ref{Sec:CaseB}, after extraction of a subsequence there exists
\[
F\in C\bigl([0,T];w\text{-}L^1_{\mathrm{loc}}((0,\infty))\bigr)
\]
such that
\[
\lim_{n\to\infty}f_n=F
\quad\text{in } C\bigl([0,T];w\text{-}L^1_{\mathrm{loc}}((0,\infty))\bigr).
\]
In particular, we have 
\[
\lim_{n\to\infty}f(t_n,\cdot)
=
\lim_{n\to\infty}f_n(0,\cdot)
=
F(0,\cdot)
\qquad\text{in } w\text{-}L^1_{\mathrm{loc}}((0,\infty)).
\]
We then define
\(
f_\infty:=F(0).
\)
Thus every sequence \(t_n\to\infty\) admits a subsequence such that
\[
f(t_n)\rightharpoonup f_\infty
\qquad\text{in } w\text{-}L^1_{\mathrm{loc}}((0,\infty)).
\]

\medskip
\noindent
\textit{Step 2: Vanishing of the dissipation for every time-translation limit.}

We define
\[
\mathfrak W(s):=
\int_0^\infty \mathrm{d}\omega
\int_0^\omega \mathrm{d}\omega_1\,
U(\omega_1)U(\omega-\omega_1)\,f(s,\omega)\,f(s,\omega_1).
\]
Since \(f\ge0\) and \(U>0\) on \((0,\infty)\), we have
\[
\mathfrak W(s)\ge0
\qquad\text{for all } s\ge0.
\]
By \eqref{eq:dissipation-assumption}, applied to \(e(r)=r\), we find
\[
\int_0^\infty \mathrm{d}s\,\mathfrak W(s)<\infty.
\]
Hence \(\mathfrak W\in L^1(0,\infty)\). Therefore, for every fixed \(T>0\),
\[
\int_0^T \mathrm{d}s\,\mathfrak W(t_n+s)
\le
\int_{t_n}^{\infty} \mathrm{d}\tau\,\mathfrak W(\tau)
\longrightarrow 0
\qquad\text{as } n\to\infty.
\]

We now fix \(0<a<b<\infty\). Define
\[
K_{a,b}(\omega,\omega_1)
:=
\begin{cases}
	U(\omega_1)U(\omega-\omega_1),
	& a<\omega_1<\omega<b,\\
	0,
	& \text{otherwise}.
\end{cases}
\]

Then \(K_{a,b}\in L^\infty((a,b)^2)\), \(K_{a,b}\ge0\), and
\[
\iint_{(a,b)^2}
\mathrm{d}\omega\,\mathrm{d}\omega_1\,
K_{a,b}(\omega,\omega_1) f_n(s,\omega)f_n(s,\omega_1)
\le
\mathfrak W(t_n+s)
\]
for every \(s\in[0,T]\).

We claim that
\begin{equation}\label{eq:time-averaged-liminf}
	\begin{aligned}
		&\int_0^T \mathrm{d}s
		\iint_{(a,b)^2}
		\mathrm{d}\omega\,\mathrm{d}\omega_1\,
		K_{a,b}(\omega,\omega_1) F(s,\omega)F(s,\omega_1)
		\\
		&\qquad\le
		\liminf_{n\to\infty}
		\int_0^T \mathrm{d}s
		\iint_{(a,b)^2}
		\mathrm{d}\omega\,\mathrm{d}\omega_1\,
		K_{a,b}(\omega,\omega_1) f_n(s,\omega)f_n(s,\omega_1).
	\end{aligned}
\end{equation}

To prove this claim, we choose a sequence
\(\{K_m\}_{m\ge1}\subset C([a,b]^2)\) such that
\[
0\le K_m\le K_{a,b}
\qquad\text{a.e. on }(a,b)^2,
\]
and
\[
	\lim_{m\to\infty}K_m= K_{a,b}
\qquad\text{a.e. on }(a,b)^2.
\]
We now fix \(m\). Since \(K_m\) is continuous and bounded on \([a,b]^2\), by the Stone-Weierstrass theorem, there exist functions
\[
\Phi_{m,\ell}(\omega,\omega_1)
=
\sum_{j=1}^{N_{m,\ell}}
\phi_j^{(m,\ell)}(\omega)\psi_j^{(m,\ell)}(\omega_1),
\]
with \(\phi_j^{(m,\ell)},\psi_j^{(m,\ell)}\in C([a,b])\), such that
\[
\|\Phi_{m,\ell}-K_m\|_{L^\infty((a,b)^2)}\to0
\qquad\text{as } \ell\to\infty.
\]

For each fixed \(\ell\), by the convergence
\[
f_n\to F
\quad\text{in } C\bigl([0,T];w\text{-}L^1_{\mathrm{loc}}((0,\infty))\bigr),
\]
we get
\[
	\lim_{n\to\infty}\int_a^b \mathrm{d}\omega\,
\phi_j^{(m,\ell)}(\omega)f_n(s,\omega)
=
\int_a^b \mathrm{d}\omega\,
\phi_j^{(m,\ell)}(\omega)F(s,\omega)
\]
uniformly with respect to \(s\in[0,T]\), and similarly
\[
	\lim_{n\to\infty}\int_a^b \mathrm{d}\omega\,
\psi_j^{(m,\ell)}(\omega)f_n(s,\omega)
=
\int_a^b \mathrm{d}\omega\,
\psi_j^{(m,\ell)}(\omega)F(s,\omega)
\]
uniformly with respect to \(s\in[0,T]\). Hence,
\[
\begin{aligned}
	&	\lim_{n\to\infty}\int_0^T \mathrm{d}s
	\iint_{(a,b)^2}
	\mathrm{d}\omega\,\mathrm{d}\omega_1\,
	\Phi_{m,\ell}(\omega,\omega_1)f_n(s,\omega)f_n(s,\omega_1)
	\\
	&\qquad=
	\int_0^T \mathrm{d}s
	\iint_{(a,b)^2}
	\mathrm{d}\omega\,\mathrm{d}\omega_1\,
	\Phi_{m,\ell}(\omega,\omega_1)F(s,\omega)F(s,\omega_1).
\end{aligned}
\]

Also, we observe that
\[
\sup_{n\ge1}\sup_{s\in[0,T]}
\int_a^b \mathrm{d}\omega\, f_n(s,\omega)<\infty,
\qquad
\sup_{s\in[0,T]}
\int_a^b \mathrm{d}\omega\, F(s,\omega)<\infty.
\]
Therefore,
\[
\begin{aligned}
	&\left|
	\int_0^T \mathrm{d}s
	\iint_{(a,b)^2}
	\mathrm{d}\omega\,\mathrm{d}\omega_1\,
	(\Phi_{m,\ell}-K_m)(\omega,\omega_1)f_n(s,\omega)f_n(s,\omega_1)
	\right|
	\\
	&\qquad\le
	T\|\Phi_{m,\ell}-K_m\|_{L^\infty}
	\left(
	\sup_{n\ge1}\sup_{s\in[0,T]}
	\int_a^b \mathrm{d}\omega\, f_n(s,\omega)
	\right)^2,
\end{aligned}
\]
and similarly
\[
\begin{aligned}
	&\left|
	\int_0^T \mathrm{d}s
	\iint_{(a,b)^2}
	\mathrm{d}\omega\,\mathrm{d}\omega_1\,
	(\Phi_{m,\ell}-K_m)(\omega,\omega_1)F(s,\omega)F(s,\omega_1)
	\right|
	\\
	&\qquad\le
	T\|\Phi_{m,\ell}-K_m\|_{L^\infty}
	\left(
	\sup_{s\in[0,T]}
	\int_a^b \mathrm{d}\omega\, F(s,\omega)
	\right)^2.
\end{aligned}
\]
Letting \(\ell\to\infty\), we obtain
\[
\begin{aligned}
	&\int_0^T \mathrm{d}s
	\iint_{(a,b)^2}
	\mathrm{d}\omega\,\mathrm{d}\omega_1\,
	K_m(\omega,\omega_1)f_n(s,\omega)f_n(s,\omega_1)
	\\
	&\qquad\longrightarrow
	\int_0^T \mathrm{d}s
	\iint_{(a,b)^2}
	\mathrm{d}\omega\,\mathrm{d}\omega_1\,
	K_m(\omega,\omega_1)F(s,\omega)F(s,\omega_1) \mbox{ as } n\to\infty.
\end{aligned}
\]

Since \(0\le K_m\le K_{a,b}\), we get
\[
\begin{aligned}
	&\int_0^T \mathrm{d}s
	\iint_{(a,b)^2}
	\mathrm{d}\omega\,\mathrm{d}\omega_1\,
	K_m(\omega,\omega_1)F(s,\omega)F(s,\omega_1)
	\\
	&\qquad\le
	\liminf_{n\to\infty}
	\int_0^T \mathrm{d}s
	\iint_{(a,b)^2}
	\mathrm{d}\omega\,\mathrm{d}\omega_1\,
	K_{a,b}(\omega,\omega_1)f_n(s,\omega)f_n(s,\omega_1).
\end{aligned}
\]
Letting \(m\to\infty\) and using the dominated convergence theorem on the left-hand side, we obtain \eqref{eq:time-averaged-liminf}.
Combining \eqref{eq:time-averaged-liminf} with the estimate by \(\mathfrak W(t_n+s)\), we find
\[
\begin{aligned}
	&\int_0^T \mathrm{d}s
	\int_a^b \mathrm{d}\omega
	\int_a^\omega \mathrm{d}\omega_1\,
	U(\omega_1)U(\omega-\omega_1)F(s,\omega)F(s,\omega_1)
	\\
	&\qquad\le
	\liminf_{n\to\infty}
	\int_0^T \mathrm{d}s\,\mathfrak W(t_n+s)
	=0.
\end{aligned}
\]
Therefore,
\[
\int_a^b \mathrm{d}\omega
\int_a^\omega \mathrm{d}\omega_1\,
U(\omega_1)U(\omega-\omega_1)F(s,\omega)F(s,\omega_1)
=0
\]
for a.e. \(s\in[0,T]\). Letting \(a\downarrow0\) and \(b\uparrow\infty\), the monotone convergence theorem gives
\begin{equation}\label{eq:F-zero-dissipation-ae}
	\int_0^\infty \mathrm{d}\omega
	\int_0^\omega \mathrm{d}\omega_1\,
	U(\omega_1)U(\omega-\omega_1)F(s,\omega)F(s,\omega_1)
	=0
\end{equation}
for a.e. \(s\in[0,T]\).

We claim that \eqref{eq:F-zero-dissipation-ae} implies
\(
F(s,\cdot)=0
\text{ a.e. on }(0,\infty)
\)
for a.e. \(s\in[0,T]\).
Indeed, fix such a time \(s\). Suppose by contradiction that \(F(s,\cdot)\not\equiv0\). Since
\(F(s,\cdot)\ge0\) and \(F(s,\cdot)\in L^1_{\mathrm{loc}}((0,\infty))\), there exists an interval
\((a,b)\subset(0,\infty)\) such that the set
\[
\mathcal S:=\{\omega\in(a,b):F(s,\omega)>0\}
\]
has positive measure. We choose \(c\in(a,b)\) such that both
\[
\mathcal S_1:=\mathcal S\cap(a,c),
\qquad
\mathcal S_2:=\mathcal S\cap(c,b)
\]
have positive measure. Then the set
\[
\mathcal S_3:=\{(\omega,\omega_1):\omega\in\mathcal S_2,\ \omega_1\in\mathcal S_1\}
\]
has positive measure and satisfies \(0<\omega_1<\omega\) for every
\((\omega,\omega_1)\in\mathcal S_3\). For every
\((\omega,\omega_1)\in\mathcal S_3\), we have
\[
\omega_1>0,\qquad \omega-\omega_1>0,
\qquad
F(s,\omega)>0,\qquad F(s,\omega_1)>0.
\]
Since \(U(\xi)>0\) for every \(\xi>0\), it follows that
\[
U(\omega_1)U(\omega-\omega_1)F(s,\omega)F(s,\omega_1)>0
\]
on a set of positive measure, contradicting \eqref{eq:F-zero-dissipation-ae}. Hence
\[
F(s,\cdot)=0
\qquad\text{a.e. on }(0,\infty)
\]
for a.e. \(s\in[0,T]\).

Finally, since
\(
F\in C\bigl([0,T];w\text{-}L^1_{\mathrm{loc}}((0,\infty))\bigr),
\)
for every \(\psi\in C_c((0,\infty))\), the map
\[
s\mapsto \int_0^\infty \mathrm{d}\omega\,\psi(\omega)F(s,\omega)
\]
is continuous on \([0,T]\). Since \(F(s,\cdot)=0\) for a.e. \(s\in[0,T]\), this continuous function vanishes for a.e. \(s\), and therefore vanishes for every \(s\in[0,T]\). Hence
\[
F(s,\cdot)=0
\qquad\text{in } w\text{-}L^1_{\mathrm{loc}}((0,\infty))
\qquad\text{for every } s\in[0,T].
\]
In particular,
\[
f_\infty=F(0)=0.
\]

We have shown that every sequence \(t_n\to\infty\) admits a subsequence such that
\[
f(t_n,\cdot)\rightharpoonup 0
\qquad\text{in } w\text{-}L^1_{\mathrm{loc}}((0,\infty)).
\]
Therefore, we obtain the convergence
\[
f(t,\cdot)\rightharpoonup 0
\qquad\text{in } w\text{-}L^1_{\mathrm{loc}}((0,\infty))
\qquad\text{as } t\to\infty.
\]

\medskip
\noindent
\textit{Step 3: Strong local convergence.}

We fix \(0<a<b<\infty\). Then,  we find
	\[
	f(t,\cdot)\rightharpoonup 0
	\qquad\text{in } w\text{-}L^1((a,b))
	\quad\text{as } t\to\infty.
	\]
	Since the constant function \(1\) belongs to \(L^\infty((a,b))\), it may be used as a test function in the weak convergence. Hence, we find
	\[
	\lim_{t\to\infty}\int_a^b \mathrm{d}\omega\, f(t,\omega)= 0.
	\]
	Since \(f(t,\omega)\ge0\), we have	\[
	\lim_{t\to\infty}\|f(t,\cdot)\|_{L^1((a,b))}= 0.
	\]
	Thus, we obtain the convergence
	\[
	\lim_{t\to\infty} f(t,\cdot)= 0
	\qquad\text{strongly in }L^1((a,b)).
	\]
	Since \(0<a<b<\infty\) was chosen arbitrarily, it follows that
	\[
	\lim_{t\to\infty}f(t,\cdot)= 0
	\qquad\text{in }L^1_{\mathrm{loc}}((0,\infty)).
	\]
	This completes the proof.
\end{proof}

\section{Proof of Theorem \ref{thm}}
The conclusions of the Theorem follow from Propositions \ref{Prop:Existence:U=x:Hphi}, \ref{prop:global-Ux2-Hphi},  \ref{prop:equilibrium}.

\section{Data Availability Statement}
No datasets were generated or analyzed during the current study.

\section{Conflict of Interest}

The authors declare that they have no conflicts of interest.
\bibliographystyle{plain}

\bibliography{WaveTurbulence}

 \end{document}